\documentclass{amsart}
\usepackage{amsmath,amsthm,mathtools, amsfonts,amssymb}
\usepackage[utf8]{inputenc}
\usepackage[dvipsnames]{xcolor}
\usepackage{graphicx}
\usepackage{graphicx} 
\usepackage{color}
\usepackage{tikz}
\usepackage{tikz-cd}
\usepackage[all]{xy}

\usepackage{extarrows}
\usepackage{ytableau}

\usepackage[mathscr]{euscript}

\allowdisplaybreaks[4]
\numberwithin{equation}{section}
\usepackage[colorinlistoftodos]{todonotes}

\newcommand{\G}{\mathfrak{G}}

\newcommand{\Z}{\mathbb{Z}}
\newcommand{\SP}{\mathscr{SP}}
\newcommand{\hL}{\hat{\Lambda}}
\newcommand{\Rep}{\mathscr{R}}
\newcommand{\Winf}{W_\infty}
\newcommand{\Winfgr}{\Winf^0}
\newcommand{\RR}{\mathscr{R}}
\newcommand{\lvac }{\langle \mathrm{vac} |}
\newcommand{\rvac}{|\mathrm{vac} \rangle}

\newtheorem{thm}{Theorem}[section]
\newtheorem{lem}[thm]{Lemma}
\newtheorem{prop}[thm]{Proposition}
\newtheorem{cor}[thm]{Corollary}
\newtheorem{conj}[thm]{Conjecture}

\theoremstyle{remark}
\newtheorem{example}[thm]{Example}

\newtheorem{remark}[thm]{Remark}

\begin{document}

\title{Neutral-Fermion constructions of factorial $gp$-and $gq$-Functions}

\author[Koushik Brahma, Takeshi Ikeda, Shinsuke Iwao, Yi Yang]{Koushik Brahma$^1$, Takeshi Ikeda$^1$, Shinsuke Iwao$^2$, Yi Yang$^{3}$
}
\dedicatory{
$^1$ Waseda University, Faculty of Science and Engineering\\
3-4-1 Okubo, Shinjuku-ku
Tokyo 169-8555, Japan\\
$^2$ Keio University, Faculty of Business and Commerce\\
4-1-1 Hiyoshi, Kohoku-ku
Yokohama, Kanagawa 223-8521, Japan \\
$^3$Sun Yat-sen University, School of Mathematics\\
135 Xin'gang West Road, Haizhu,
Guangzhou 510275, China \\
Email addresses: koushikbrahma95@gmail.com, gakuikeda@waseda.jp,\\ iwao-s@fbc.keio.ac.jp, yangy875@mail2.sysu.edu.cn
}

\date{\today}

\subjclass[2020]{05E05, 14M15, 19L47}
\keywords{Symmetric functions, equivariant $K$-theory, boson-fermion correspondence. Pfaffian formulae, factorial Grothendieck polynomial}

\begin{abstract}
We develop neutral-fermionic constructions for the factorial \(gp\)- and \(gq\)-functions introduced by Nakagawa and Naruse, which are respectively dual to the factorial \(GQ\)- and \(GP\)-functions of Ikeda and Naruse. In particular, we realize the factorial $GP$-, $GQ$- and \(gq\)-functions as vacuum expectation values. As applications, we obtain, Jacobi--Trudi type determinantal formulas for the transition coefficients between functions with different equivariant parameters for \(gq\) and its dual \(GP\), as well as a Pfaffian formula for the factorial \(gq\)-functions. We further prove a remarkable coincidence among the transition coefficients for parameter changes for \(gp\), \(gq\), \(GQ\), and \(GP\). These coefficients admit a description  in terms of factorial Grothendieck polynomials of type A.
\end{abstract}

\maketitle

\tableofcontents

\section{Introduction}
Symmetric functions originating from \( K \)-theoretic Schubert calculus provide a rich algebraic framework connecting geometry, combinatorics, and representation theory.
Among these, the factorial \( gp \)-functions, introduced by Nakagawa and Naruse \cite{NN:Uni}, form a family of inhomogeneous symmetric functions in $y=(y_1,y_2,\ldots)$ depending on an infinite sequence of parameters \( b = (b_1, b_2, \ldots) \).
Each \( gp_\lambda(y\,|\,b) \), indexed by a strict partition \( \lambda \), is defined as the dual of the \( K \)-theoretic factorial Schur \( Q \)-function \( GQ_\lambda(x\,|\,b) \) introduced by Ikeda and Naruse \cite{IN13}, which represents the Schubert basis in the torus-equivariant \( K \)-theory of the Lagrangian Grassmannian.


An affine counterpart of these functions was recently introduced by Ikeda, Shimozono, and Yamaguchi~\cite{ISY2}
in connection with the affine Grassmannian of the symplectic group $\mathrm{Sp}_{2n}(\mathbb{C})$.
The resulting \emph{affine $gp$-functions}, indexed by a certain set of partitions $\lambda$, form the ideal-sheaf basis of the corresponding equivariant $K$-homology ring. The $gp$ function can be regarded as the limit, as
$n\to \infty$, of affine $gp$ functions. More precisely,
when $|\lambda| \le 2n$, they coincide with $gp_{\lambda}(y | b)$ under a suitable specialization of the equivariant parameters $b_i$.
This affine version has motivated further study of the underlying combinatorial and algebraic structures of \( gp \)-functions.

In this paper we also study the factorial \( gq \)-functions, defined as the duals to the factorial \( GP \)-functions that represent the Schubert basis in the \( K \)-theory of the maximal orthogonal Grassmannian.
Our aim is to provide a unified framework for these families of functions through the free–Fermion formalism, which generalizes the non-equivariant results of Iwao~\cite{Iwao2023-2}.

We express the functions
\( GP_\lambda(x\,|\,b) \),
\( GQ_\lambda(x\,|\,b) \), and
\( gq_\lambda(y\,|\,b) \)
as vacuum expectation values (Theorem~\ref{thm:GP as VEV}, Theorem~\ref{thm: gq VEV}).
We provide a similar expression
for $gp_\lambda(y\,|\,b)$ as a conjecture (Conjecture \ref{conj:gp}).
As a corollary, we obtain a Schur-type Pfaffian formula for \( gq_\lambda(y\,|\,b) \) (Corollary \ref{cor:Pf gq}).
Note that such a formula has previously been known for \( GQ_\lambda(x\,|\,b) \) and \( GP_\lambda(x\,|\,b) \) due to the work of Hudson, Ikeda, Matsumura, and Naruse.


Based on the free-Fermion formalism,
we further study
the explicit expansion formula of
these functions. We define
\begin{align*}
GP_\lambda(x\,|\,b)&=\sum_{\mu}
C^{GP}_{\lambda\mu}(b,c)
GP_{\mu}(x\,|\,c),&
GQ_\lambda(x\,|\,b)&=\sum_{\mu}
C^{GQ}_{\lambda\mu}(b,c)
GQ_{\mu}(x\,|\,c),\\
gp_\lambda(x\,|\,b)&=\sum_{\mu}
C^{gp}_{\lambda\mu}(b,c)
gp_{\mu}(x,c),&
gq_\lambda(x\,|\,b)&=\sum_{\mu}
C^{gq}_{\lambda\mu}(b,c)
gq_{\mu}(x\,|\,c).
    \end{align*}
We obtain a Jacobi–Trudi type expression for these coefficients (Theorem \ref{thm:gq(xb)}), together with an alternative formula in terms of type~A Grothendieck polynomials (Theorem \ref{thm:gp expansion}).
As a byproduct of the argument, we obtained the following unexpected coincidence of coefficients:
    \begin{equation}
C_{\lambda\mu}^{gq}(b,c)
=C_{\lambda\mu}^{gp}(b,c)=
C_{\mu\lambda}^{GP}(c,b)
=C_{\mu\lambda}^{GQ}(c,b).\label{eq:coincidence}    \end{equation}

    The paper organized as follows. In \S~ \ref{sec_GP,GQ}, we fix notation for root systems, Weyl groups and strict partitions. We explain the definition and some properties of the factorial $GP$- and $GQ$-functions. We introduce the factorial $gp$- and $gq$-functions as the dual to $GQ$ and $GP$ respectively. We also provide operator constructions for these functions.
    In \S~ \ref{sec_Free-Fermion formalism}, we introduce the neutral-free Fermion, and some of its deformations. The goal of this section is to express $GP_\lambda(x\,|\,b)$ and $gq_\lambda(y\,|\,b)$ as vacuum expectation values for the Fock space of neutral-free-Fermion. As a corollary, we obtain a Pfaffian formula for $gq_\lambda(y\,|\,b).$
    In \S~ \ref{sec:JT}, we investigate the coefficients $C_{\lambda\mu}^{gq}(b,c)$ and $C_{\lambda\mu}^{GP}(b,c)$ within the
    framework of Fermion formalism.
    We obtain a Jacobi-Trudi type formula for these coefficients. In \S~ \ref{sec_coef_gro_pol},
    we prove another expressions for the coefficients
    $C_{\lambda\mu}^{gp}(b,c)$ and $C_{\lambda\mu}^{GQ}(b,c)$ in terms of type A double Grothendieck polynomials. As a byproduct of this
    computation, we prove the coincidence of coefficients \eqref{eq:coincidence} and  give a Jacobi-Trudi type formulae of factorial Grothendieck polynomials.

\subsection*{Acknowledgements}
This work was supported by JSPS KAKENHI Grant Numbers
24KF0258, 25KF0074, 
23K25772, 22K03239, 
23K03056. 
We are grateful to Professor Hiroshi Naruse for kindly sharing the slides from his talk in the workshop on Algebraic and Enumerative Combinatorics
at Shinshu University, Japan.
The first author was supported by a JSPS Postdoctoral Fellowship for Research in Japan, during which this research was carried out.
The main part of this work was carried out while the fourth author was visiting Waseda University. The authors would like to thank Waseda University for its hospitality.

\section{Factorial $GP,GQ$ and $gp,gq$-functions}\label{sec_GP,GQ}
In this section, we review some results on factorial $GP$, $GQ$ functions following \cite{IN13} as well as their duals factorial $gq$, $gp$ functions following \cite{NN23}.

\subsection{Preliminaries on root systems and Weyl groups, and strict partitions}

\subsubsection{Root systems and Weyl groups}
Let $\mathrm{B}_\infty$ and $\mathrm{C}_\infty$ be the following Dynkin diagrams of infinite rank respectively:
\begin{center}
\begin{tikzpicture}[node distance=1.5cm and 1.2cm, auto]
    \node[draw, circle, inner sep=2pt, label=below:{$0$}] (0) {};
    \node[draw, circle, inner sep=2pt, label=below:{$1$}, right=of 0] (1) {};
    \node[draw, circle, inner sep=2pt, label=below:{$2$}, right=of 1] (2) {};
    \node[right=of 2] (dots) {\(\cdots\)};

    \draw[double distance=2pt] (0) -- (1) node[midway, yshift=-1.5ex] {\(<\)};

    \draw (1) -- (2);
    \draw (2) -- (dots);
 \end{tikzpicture}
 \end{center}
\begin{center}
\begin{tikzpicture}[node distance=1.5cm and 1.2cm, auto]
    \node[draw, circle, inner sep=2pt, label=below:{$0$}] (0) {};
    \node[draw, circle, inner sep=2pt, label=below:{$1$}, right=of 0] (1) {};
    \node[draw, circle, inner sep=2pt, label=below:{$2$}, right=of 1] (2) {};
    \node[right=of 2] (dots) {\(\cdots\)};

    \draw[double distance=2pt] (0) -- (1) node[midway, yshift=-1.5ex] {\(>\)};

    \draw (1) -- (2);
    \draw (2) -- (dots);
\end{tikzpicture}
\end{center}
For both types,
the set of vertex is
$I=\{0,1,2,\ldots\}$.
The Weyl group is the infinite hyper-octhedral
group $W_\infty$ generated by
$s_i\;(i\in I)$ satisfying the relations
$$
s_i^2=1\;(i\in I),
\quad
s_0s_1s_0s_1=s_1s_0s_1s_0,\quad
s_is_{i+1}s_i=s_{i+1}s_is_{i+1}\quad \text{for $i\ge 1$.}
$$
The group $W_\infty$ acts on the free abelian group $L:=\bigoplus_{i=1}^\infty \Z a_i$
by
\begin{equation*}
s_0(a_j)=\begin{cases}
-a_1 & \text{if $j=1$,}\\
a_j & \text{otherwise,}
\end{cases},\quad
s_i(a_j) =\begin{cases}
    a_{i+1}&\text{if $j=i$,}\\
    a_i & \text{if $j=i+1$,}\\
    a_j & \text{otherwise.}
\end{cases}
\end{equation*}
The simple roots $\{\alpha_i\}_{i\in I}$ of type $\mathrm{B}_\infty$ are defined by
$$
\alpha_0=-a_1,\quad
\alpha_i=a_i-a_{i+1}\quad(i\ge 1),
$$
while the simple roots $\{\alpha_i\}_{i\in I}$  of type $\mathrm{C}_\infty$ are defined by
$$
\alpha_0=-2a_1,\quad
\alpha_i=a_i-a_{i+1}\quad(i\ge 1).
$$
After this point, results for types $\mathrm{B}$ and $\mathrm{C}$ will often be presented in parallel.
Throughout this paper, we deliberately use the same notation for both types without changing symbols.
The interpretation should be understood from the context.

\subsubsection{Strict Partitions and Shifted Diagrams}

A strict partition $\lambda$ is a strictly decreasing sequence of positive integers
\[
\lambda = (\lambda_1> \lambda_2>\dots >\lambda_\ell>0).\]
Let $\SP$ denote the set of all strict partitions.
We identify $\lambda = (\lambda_1 > \cdots > \lambda_\ell) \in \SP$ with its shifted Young diagram consisting of the boxes
\[
(i,j) \quad \text{with} \quad 1 \le i \le \ell,\;\; i \le j \le i+\lambda_i-1,
\]
drawn in matrix-style coordinates (row $i$, column $j$). The size of $\lambda$ is defined by
$|\lambda| = \sum_{i=1}^\ell \lambda_i,
$
which is equal to the total number of boxes in the corresponding shifted Young diagram.
The length of $\lambda$ is $\ell$, which is the number of rows of the diagram.
For each box $(i,j)$ in the shifted Young diagram, we define its \emph{content} by
\[
\mathfrak{c}(i,j) = j - i.
\]
Thus, boxes lying on the same diagonal have the same content.

\begin{example}\label{ex_st_par}
  The shifted diagram of $\lambda = (5, 3, 1)\in \SP$ is
 depicted below, with each box labeled by its contents:
\[
\begin{ytableau}
0 & 1 & 2 & 3&4 \\
\none & 0 & 1&2 \\
\none & \none & 0
\end{ytableau}
\]

\end{example}

Fix an integer $i\ge 0$.
An \emph{$i$-addable} box of $\lambda\in \SP$ is a box of content $i$ that can be added to the shifted Young diagram of $\lambda$ so that the result is again a shifted Young diagram of a strict partition.
Similarly, an \emph{$i$-removable} box of $\lambda\in \SP$ is a box of content $i$ that can be removed from the diagram of $\lambda$ so that the remaining diagram is still a shifted Young diagram of a strict partition.
If $\lambda$ has a $i$-addable box, define $s_i \lambda$ to be the strict partition obtained by adding that box.
If $\lambda$ has a $i$-removable box, define $s_i \lambda$ to be the strict partition obtained by removing that box.
Otherwise, set $s_i \lambda = \lambda$.
This defines an action of $\Winf$ on $\SP.$

Let $S_\infty$ be the subgroup of $\Winf$ generated by the simple reflections
$s_i \; (i \ge 1).
$
Let $\Winfgr$ denote the set of minimal-length coset representatives of $\Winf / S_\infty$, namely
\[
\Winfgr
=
\{\, w \in \Winf \mid \ell(ws_i)=\ell(w)+1 \text{ for all } i \ge 1 \,\}.
\]

There is an isomorphism between $\Winfgr$ and $\SP$.
For $\lambda=(\lambda_1,\dots,\lambda_r)\in \SP$ and $i \ge 1$, define
$
\rho_i := s_{i-1}\cdots s_1 s_0.$
Then set
\begin{equation}\label{eq_w_lam_A}
w_\lambda
:=
\rho_{\lambda_r}\cdots \rho_{\lambda_1}.
\end{equation}
The element $w_\lambda$ is the minimal-length coset representative in $\Winfgr$
corresponding to $\lambda \in \SP$.

\subsection{Factorial $GP$- and $GQ$-functions}
This subsection reviews the definition and basic properties of the factorial
$GP$- and $GQ$-functions.
\subsubsection{Notation in the $K$-theoretic Setting}

Let $\beta$ be an indeterminate. We use the following
notation
$$
x\oplus y=x+y+\beta xy,\quad
x\ominus y=\frac{x-y}{1+\beta y},\quad
\ominus x= 0\ominus x=\frac{-x}{1+\beta x}.
$$
Let $\mathbb{F}=\mathbb{Q}(\beta)(b_1,b_2,\ldots)$ be the field of  rational functions generated by $b_1,b_2,\ldots$ and $\beta$.
Let $\RR$ be the $\Z[\beta]$- subalgebra of
$\mathbb{F}$ generated by
$b_i$ and $\ominus{b}_i$ for $i=1,2,\ldots$.

Define, for type $\mathrm{B}_\infty$
\begin{equation}\label{eq_we_B_infty}
c(\alpha_0)=b_1,\quad c(\alpha_i)=b_{i+1}\ominus b_i\quad \text{for $i\ge 1$.}
\end{equation}
and, for type $\mathrm{C}_\infty$
\begin{equation}\label{eq_we_C_infty}
c(\alpha_0)=b_1\oplus b_1,\quad c(\alpha_i)=b_{i+1}\ominus b_i\quad \text{for $i\ge 1$.}
\end{equation}
The ring $\RR$ is a deformation of the group ring $\Z[L]=\Z[e^{\pm a_i}\;(i\ge 1)]$ of $L$.
In fact, via the specialization map $\Z[\beta]\rightarrow \Z$ given by
$\beta\mapsto -1$, $\Z\otimes_{\Z[\beta]}\RR$ is
isomorphic to the group ring $\Z[L]$ of $L$.
More precisely, we identify the generators by
$$
b_i\mapsto  1-e^{-a_i},\quad
\ominus b_i\mapsto 1-e^{a_i}.
$$
Then for each $i\in I$, the element
$c(\alpha_i)\in \RR$ specializes to $1-e^{\alpha_i}\in \Z[L].$

For $k\ge 1$, define
\begin{align}
[\![x|b]\!]^k&=(x\oplus x)(x\ominus b_1)\cdots (x\ominus b_{k-1}),\label{eq:factorial for GQ}\\
[x|b]^k&=(x\ominus b_1)\cdots (x\ominus b_{k}).
\end{align}
The convention for equivariant parameters adopted in this paper differs from that in \cite{IN13} only in that the parameter
$b_i$ in \cite{IN13} corresponds to
$\ominus b_i$ in the present work.

\subsubsection{Definition of $GP$ and $GQ$}
 Let $\lambda\in \SP.$ Let $r=\ell(\lambda).$
 For $n\ge r$,
 we define
\begin{align}
    GQ_\lambda(x_1,\ldots,x_n|b)&=
    \frac{1}{(n-r)!}
    \sum_{w\in S_n}
    w\left(
    \prod_{i=1}^{r}[\![x_i|b]\!]^{\lambda_i}
    \prod_{i=1}^r \prod_{j=i+1}^n
    \frac{x_i\oplus x_j}{x_i\ominus x_j}
    \right),\\
GP_\lambda(x_1,\ldots,x_n|b)&=
    \frac{1}{(n-r)!}
    \sum_{w\in S_n}
    w\left(
\prod_{i=1}^rx_i[x_i|b]^{\lambda_i-1}
    \prod_{i=1}^r \prod_{j=i+1}^n
    \frac{x_i\oplus x_j}{x_i\ominus x_j}
    \right),
    \label{eq:GP}
\end{align}
where $w$ permutes $x_1,\ldots,x_n$. These are symmetric polynomials in $x=(x_1,\ldots,x_n)$ with coefficients in $\Rep.$ We have the stability property
\begin{align*}
GQ_\lambda(x_1,\ldots,x_{n},0|b)&=GQ_\lambda(x_1,\ldots,x_{n}|b),\\
GP_\lambda(x_1,\ldots,x_{n},0|b)&=GP_\lambda(x_1,\ldots,x_{n}|b),
\label{eq:GP}
\end{align*}
and so the infinite variable version
$GQ_\lambda(x_1,x_2,\ldots|b)$ and  $GP_\lambda(x_1,x_2,\ldots|b)$
makes sense, which we simply denote by $GQ_\lambda(x\,|\,b)$
and $GP_{\lambda}(x\,|\,b)$ respectively.

By setting all parameters $b_i$ to zero, we obtain the
non-equivariant versions
\begin{align*}
GQ_\lambda(x):=GQ_\lambda(x|0),\quad
GP_\lambda(x):=GP_\lambda(x|0).
\end{align*}
Define
\begin{equation}
G\Gamma_{\mathscr{R}}(x)
  := \bigoplus_{\lambda\in \SP} \mathscr{R}\, GP_\lambda(x),
\qquad
G\Gamma_{\mathscr{R}}^+(x)
  := \bigoplus_{\lambda\in \SP} \mathscr{R}\, GQ_\lambda(x).
\end{equation}
Then
$\{GP_\lambda(x| b)\,|\, \lambda\in \SP\}$
(resp. $\{GQ_\lambda(x| b)\,|\, \lambda\in \SP\}$)
forms an $\mathscr{R}$-basis of
$G\Gamma_{\mathscr{R}}(x)$
(resp. $G\Gamma_{\mathscr{R}}^+(x)$).
It is known that both are $\mathscr{R}$-algebras, and that
$G\Gamma_{\mathscr{R}}^+(x)$ is a subalgebra of
$G\Gamma_{\mathscr{R}}(x)$; see \cite{IN13} and \cite{CM} for details.
\begin{remark}
The family of functions $GP_\lambda(x\,|\,b)$ defined here is of type B, in the sense that they correspond to the Schubert basis of the torus equivariant $K$-theory for the maximal isotropic
Grassmannian of type B (see \cite[\S~ 8]{IN13}). More precisely,
the function defined in \eqref{eq:GP} is
$GB_\lambda^{(n)}(x|\ominus b)=GP_{\lambda}(x_1,\ldots,x_n|0,\ominus b)$ in the notation of \cite[\S~ 6.1]{IN13}.
\end{remark}

\subsubsection{Vanishing property}
 For each strict partition $\mu=(\mu_1,\cdots,\mu_r)$ of length $r$ define a sequence $b_{\mu}$ by
 $$b_\mu=(b_{\mu_1},\cdots,b_{\mu_r},0,\cdots )$$
\begin{prop}[Vanishing property, \cite
{IN13}]\label{prop_van_pro}
 Let $\lambda$ and $\mu$ be strict partitions. If $\lambda\not\subseteq\mu$ then $GQ_\lambda(b_\mu|b)=0$, and $GP_\lambda(b_\mu|b)=0$. Moreover, we have $GQ_\lambda(b_\lambda|b)\neq 0$, $GP_\lambda(b_\lambda|b)\neq 0$.
\end{prop}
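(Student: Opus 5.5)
We plan to argue directly from the symmetrization formulas, working with $n=r=\ell(\mu)$ variables. By the stability property $GQ_\lambda(x_1,\ldots,x_n,0|b)=GQ_\lambda(x_1,\ldots,x_n|b)$, we may evaluate at $x=(b_{\mu_1},\ldots,b_{\mu_r})$ in exactly $r$ variables. Write $\ell=\ell(\lambda)$. If $\ell>r$, the formula with $n=r<\ell$ is not available, but the case is still easy. We take $n=\ell$ variables with $x_{r+1}=\cdots=x_\ell=0$. Every term of the sum then has some $x_i=0$ with $i\le \ell$ in a position carrying a factor $x_i\oplus x_i$ (resp.\ $x_i$), and this factor kills the term. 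Hence $\lambda\not\subseteq\mu$ is automatic in this case and the value is $0$. So assume $\ell\le r$.

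Each summand in the formula is indexed by $w\in S_r$. It contains the factor $\prod_{i=1}^{\ell}[\![x_{w(i)}|b]\!]^{\lambda_i}$ (resp.\ $x_{w(i)}[x_{w(i)}|b]^{\lambda_i-1}$). Since $x_k=b_{\mu_k}$ and $[\![b_m|b]\!]^{k}$ contains $b_m\ominus b_m=0$ whenever $k>m$ (similarly $[b_m|b]^{k-1}$ vanishes for $k-1\ge m$), the summand vanishes unless $\lambda_i\le \mu_{w(i)}$ for all $i\le\ell$.

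We must also handle the denominators $x_i\ominus x_j$. These are nonzero because the $b_{\mu_k}$ are distinct. However, the denominators $x_{w(i)}\ominus x_{w(j)}$ only appear with $i\le \ell$, $j>i$. The key combinatorial step is to show that if some summand survives with $w\neq$ an order-preserving assignment, it still gets killed. For $i<j\le\ell$ with $w(i)>w(j)$, the numerator $x_{w(i)}\oplus x_{w(j)}$ does not vanish, so we instead use the $\lambda_i\le\mu_{w(i)}$ constraints. These say that the map $i\mapsto w(i)$ injects $\{1,\ldots,\ell\}$ into indices with $\mu_{w(i)}\ge\lambda_i$. Since both $\lambda$ and $\mu$ are strictly decreasing, such an injection exists iff $\lambda_i\le\mu_i$ for all $i$, i.e.\ $\lambda\subseteq\mu$ (the shifted containment is exactly $\lambda_i\le\mu_i$). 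This is a Hall-type / sorting argument: if $\lambda_k>\mu_k$ for some $k$, then $\lambda_1,\ldots,\lambda_k$ all exceed $\mu_k,\ldots,\mu_r$, so they must map into $\{1,\ldots,k-1\}$, which is impossible. Therefore every summand vanishes when $\lambda\not\subseteq\mu$. This gives the first claim with no cancellation needed.

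For $\mu=\lambda$ (so $r=\ell$), the same argument shows that only $w$ with $\lambda_i\le\lambda_{w(i)}$ for all $i$ survive, which forces $w=\mathrm{id}$. The value is therefore the single term
\[
\prod_{i=1}^{r}[\![b_{\lambda_i}|b]\!]^{\lambda_i}\prod_{i<j}\frac{b_{\lambda_i}\oplus b_{\lambda_j}}{b_{\lambda_i}\ominus b_{\lambda_j}}
\]
(with the analogous product for $GP$). This is nonzero in $\mathbb{F}$, since every factor $b_m\ominus b_k$ with $k<m$, every factor $b_m\oplus b_m$ and $b_m$, and every factor $b_{\lambda_i}\oplus b_{\lambda_j}$ is a nonzero rational function in the independent variables $b$. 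The only delicate point is the bookkeeping of the $\ell>r$ case and the $1/(n-r)!$ normalization. Summands differing by permutations of the last $n-r$ variables give equal contributions, and this needs checking when enlarging $n$. The main obstacle is the correct reduction to the injection criterion, which is handled above.
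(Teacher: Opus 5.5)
The paper does not prove this proposition; it quotes it from \cite{IN13}. Your argument for the case $\ell(\lambda)\le\ell(\mu)$ is the standard direct-evaluation argument and is correct. With $n=\ell(\mu)$ variables specialized to the distinct nonzero values $b_{\mu_1},\dots,b_{\mu_r}$, no denominator $x_{w(i)}\ominus x_{w(j)}$ vanishes. A summand survives only if $\lambda_i\le\mu_{w(i)}$ for all $i\le\ell(\lambda)$, and your pigeonhole step excludes this when some $\lambda_k>\mu_k$. For $\mu=\lambda$ the condition forces $w(i)\le i$, hence $w=\mathrm{id}$, which leaves a visibly nonzero product. The remark about the numerators $x_{w(i)}\oplus x_{w(j)}$ plays no role and can be deleted.

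The step that fails as written is the case $\ell(\lambda)\ge\ell(\mu)+2$ (write $\ell=\ell(\lambda)$ as you do). Then at least two of your $n=\ell$ variables are set to $0$. Since $n=\ell$, every pair $i<j\le n$ occurs in the denominator of every summand, so each summand has $0\ominus 0=0$ in its denominator. The summands are therefore of the form $0/0$, and ``this factor kills the term'' has no meaning. For example, with $\lambda=(2,1)$ and $\mu=\varnothing$ both summands are $0/0$, while the polynomial itself is $(x_1\oplus x_1)(x_2\oplus x_2)(x_1\oplus x_2)$. The case $\ell(\lambda)=\ell(\mu)+1$ is fine as you wrote it, since there is only one zero coordinate.

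The conclusion is still true, and the repair is short. Set only $x_{\ell}=0$ and keep $x_1,\dots,x_{\ell-1}$ as indeterminates. Then all denominators are nonzero and your termwise argument is legitimate. It shows that $GQ_\lambda(x_1,\dots,x_{\ell-1},0\,|\,b)$, and likewise $GP_\lambda$, vanishes identically as a polynomial, after which the remaining variables may be specialized freely.

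Alternatively, pull the $S_\ell$-invariant factor $\prod_{i}(x_i\oplus x_i)$ (resp.\ $x_1\cdots x_\ell$) out of the sum. What remains is an antisymmetrization divided by the Vandermonde, because $x_i\ominus x_j=(x_i-x_j)/(1+\beta x_j)$, so it is a polynomial. This is exactly Lemma~\ref{lem_fac_GP} applied to $\lambda=\delta_\ell+\lambda'$.
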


\subsubsection{Factorization property}
For a partition $\lambda$ of length less than or equal to $r$, define the \emph{factorial Grothendieck polynomial} by
\begin{equation}\label{eq: def fac Groth}
G_\lambda(x_1,\ldots,x_r|b)
=\sum_{w\in S_r}
w\left(
\frac{\prod_{i=1}^{r}[x_i|b]^{\lambda_i+r-i}}{\prod_{1\le i<j\le r}(x_i\ominus x_j)}
\right).
\end{equation}
The following factorization formula holds
which will be used in \S~ \ref{sec_coef_gro_pol}.
\begin{lem}[Factorization formula for factorial $GP$- and $GQ$-functions]\label{lem_fac_GP}
For a partition $\lambda$ of length less than or equal to $r$, we have
\begin{align}
GQ_{\delta_r+\lambda}(x_1,\ldots,x_r|b)
&=GQ_{\delta_r}(x_1,\ldots,x_r|b)
G_\lambda(x_1,\ldots,x_r|b),\label{eq: GQ rho+la 1}\\
GP_{\delta_r+\lambda}(x_1,\ldots,x_r|b)
&=GP_{\delta_r}(x_1,\ldots,x_r|b)
G_\lambda(x_1,\ldots,x_r|b),\label{eq: GP rho+la 1}\\
GP_{\delta_r}(x_1,\ldots,x_r|b)&=
GP_{\delta_r}(x_1,\ldots,x_r)
=x_1\cdots x_r\prod_{1\le i<j\le r}(x_i\oplus x_j),\label{eq:GP rho 0}\\
GQ_{\delta_r}(x_1,\ldots,x_r|b)&=
GQ_{\delta_r}(x_1,\ldots,x_r)
=\prod_{1\le i\leq j\le r}(x_i\oplus x_j).\label{eq:GQ rho 0}
\end{align}
\end{lem}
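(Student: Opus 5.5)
The plan is to work directly from the symmetrization definitions with $n=r$, where the prefactor $1/(n-r)!$ equals $1$. Here $\delta_r=(r,r-1,\ldots,1)$, so $(\delta_r+\lambda)_i=\lambda_i+r-i+1$, and the sum runs over $w\in S_r$. The key observation is that, when $n=r$, the product $\prod_{i<j}(x_i\oplus x_j)$ is symmetric. It can therefore be pulled out of the symmetrization, which leaves only the denominators $\prod_{i<j}(x_i\ominus x_j)$ inside.

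\emph{Step 1: the factorization.} For $GQ$, write $[\![x_i|b]\!]^{\lambda_i+r-i+1}=(x_i\oplus x_i)\,[x_i|b]^{\lambda_i+r-i}$. Indeed, by \eqref{eq:factorial for GQ} the factors after $x\oplus x$ are $(x\ominus b_1)\cdots(x\ominus b_{\lambda_i+r-i})$. The product $\prod_{i=1}^r (x_i\oplus x_i)$ is symmetric, so it also comes out of the sum. We obtain
\[
GQ_{\delta_r+\lambda}(x_1,\ldots,x_r|b)=\prod_{1\le i\le j\le r}(x_i\oplus x_j)\sum_{w\in S_r} w\left(\frac{\prod_i [x_i|b]^{\lambda_i+r-i}}{\prod_{i<j}(x_i\ominus x_j)}\right),
\]
and the sum is exactly $G_\lambda(x_1,\ldots,x_r|b)$ by \eqref{eq: def fac Groth}. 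For $GP$, the summand is $\prod_i x_i[x_i|b]^{\lambda_i+r-i}$ times the same ratio, and $x_1\cdots x_r$ is symmetric. This gives
\[
GP_{\delta_r+\lambda}=x_1\cdots x_r\prod_{i<j}(x_i\oplus x_j)\,G_\lambda(x_1,\ldots,x_r|b).
\]

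\emph{Step 2: the staircase cases.} Take $\lambda=\emptyset$ in the identities of Step 1. It then suffices to show $G_\emptyset(x_1,\ldots,x_r|b)=1$. Once this holds, \eqref{eq:GP rho 0} and \eqref{eq:GQ rho 0} follow, including their independence of $b$. Feeding them back into Step 1 yields \eqref{eq: GQ rho+la 1} and \eqref{eq: GP rho+la 1}.

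\emph{Step 3: proving $G_\emptyset=1$.} This is the main obstacle, and I would handle it as follows. One needs
\[
\sum_{w\in S_r} w\left(\frac{\prod_{i=1}^r[x_i|b]^{r-i}}{\prod_{i<j}(x_i\ominus x_j)}\right)=1.
\]
First clear the denominators. Since $x_i\ominus x_j=(x_i-x_j)/(1+\beta x_j)$, the denominator is the Vandermonde $\prod_{i<j}(x_i-x_j)$ divided by $\prod_j(1+\beta x_j)^{j-1}$. Also $[x|b]^{k}=\prod_{m\le k}(x-b_m)\big/(1+\beta b_m)$. After these substitutions, the claim becomes an antisymmetrization identity of the form
\[
\sum_{w} \operatorname{sgn}(w)\, w\left(\prod_i f_{r-i}(x_i)\,(1+\beta x_i)^{i-1}\right)=\prod_{i<j}(x_i-x_j)\cdot\text{const},
\]
where the constant must be checked to equal the prefactor $\prod_i\prod_{m\le r-i}(1+\beta b_m)$. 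Here $f_k(x)=\prod_{m\le k}(x-b_m)$ is monic of degree $k$, and $f_{r-i}(x_i)(1+\beta x_i)^{i-1}$ is a polynomial of degree $r-1$ in $x_i$.

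I would compute the left-hand side as $\det\bigl(f_{r-i}(x_j)(1+\beta x_j)^{i-1}\bigr)_{i,j}$. This is a polynomial of degree at most $r-1$ in each variable and it is alternating, so it equals a constant times the Vandermonde. The constant is read off from the coefficient of $x_1^{r-1}x_2^{r-2}\cdots$, which is a triangular computation that may be messy. A cleaner alternative is induction on $r$. Set $x_r=b_1$ and check the identity using a vanishing argument. Alternatively, note that the left side is a polynomial in the $x$'s which is of total degree $0$ after homogenization and is bounded at infinity, so its value can be read off from the leading behavior as $x_1\to\infty$, which reduces to $r-1$ variables. I would attempt the induction via $x_1\to\infty$ first. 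As a fallback, $G_\emptyset=1$ is the standard normalization of factorial Grothendieck polynomials and could be cited.
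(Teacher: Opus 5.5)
Your proposal is correct and follows the paper's route: pull the symmetric factor $x_1\cdots x_r\prod_{i<j}(x_i\oplus x_j)$ (resp.\ $\prod_{i\le j}(x_i\oplus x_j)$) out of the $S_r$-symmetrization to get $G_\lambda$, then take $\lambda=\varnothing$ with $G_\varnothing=1$ and set $b=0$. The paper cites \cite{IN13} for the $GQ$ case and asserts $G_\varnothing=1$ without proof, so your Step~3 goes beyond it. For that step, specializing $x_j=b_j$ makes your matrix $\bigl(f_{r-i}(x_j)(1+\beta x_j)^{i-1}\bigr)$ anti-triangular; your $x_r=b_1$ induction also works, since the factors $x\ominus b_1$ cancel. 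By contrast, the $x_1\to\infty$ limit does not isolate a single term when $\beta\neq 0$, because every $f_{r-i}(x)(1+\beta x)^{i-1}$ has degree exactly $r-1$.
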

\begin{proof}
The factorization in \eqref{eq: GQ rho+la 1} was proved in \cite[Proposition 3.2]{IN13}. We write a proof for \eqref{eq: GP rho+la 1}.
Since $\ell(\delta_r+\lambda)=r$, by using \eqref{eq:GP} we have,
\begin{align}
GP_{\delta_r+\lambda}(x_1,\ldots,x_r|b)&=\sum_{w\in S_r}
w\left(
\prod_{i=1}^rx_i
[x_i|b]^{\lambda_i+r-i}
\prod_{1\le i<j\le r}\frac{x_i\oplus x_j}{x_i\ominus x_j}
\right)\nonumber\\
&=
x_1\cdots x_r\prod_{1\le i<j\le r}(x_i\oplus x_j)
\sum_{w\in S_r}
w\left(
\frac{\prod_{i=1}^r[x_i|b]^{\lambda_i+r-i}
}{\prod_{1\le i<j\le r}(x_i\ominus x_j)}
\right)\nonumber\\
&=
x_1\cdots x_r\prod_{1\le i<j\le r}(x_i\oplus x_j)
\cdot G_\lambda(x_1,\ldots,x_r|b),\label{eq:GP rho+la}
\end{align}
where we used
the fact that $x_1\cdots x_r\prod_{1\le i<j\le r}(x_i\oplus x_j)$ is symmetric. In particular,  for  $\lambda=\varnothing,$ we have
\begin{equation}
GP_{\delta_r}(x_1,\ldots,x_r|b)=x_1\cdots x_r\prod_{1\le i<j\le r}(x_i\oplus x_j)
\label{eq:GP rho}
\end{equation}
because $G_\varnothing(x_1,\ldots,x_r|b)=1.$ By setting $b=0$ in \eqref{eq:GP rho}, we obtain \eqref{eq:GP rho 0}, and
hence \eqref{eq: GP rho+la 1} from  \eqref{eq:GP rho+la}.
\end{proof}

\subsubsection{Demazure operators}\label{sec:Demazure}

The group $\Winf$ acts on $G\Gamma_\Rep(x)$ by
\begin{align*}
s_0 f(x|\, b)
  &= f(\ominus b_1, x_1, x_2, \ldots \mid\, \ominus b_1, b_2, b_3, \ldots), \\
s_i f(x|\, b)
  &= f(x \mid \ldots, b_{i+1}, b_i, \ldots)
  \quad \text{for } i \ge 1.
\end{align*}
In particular, the definition of $s_0$ relies on the
$K$-theoretic $Q$-cancellation property of $G\Gamma_\Rep(x)$ defined in \cite{IN13};
see \cite[\S~ 2]{IN13} for more details.
Moreover,
$G\Gamma_\Rep^+(x)$ is
stable under the $W_\infty$ action.

Define
\begin{equation}
D_i f(x\,|\,b)=\frac{1}{c(-\alpha_i)}\left(s_i f(x\,|\,b)-(1+\beta c(-\alpha_i)f(x\,|\,b)\right).\label{eq:Dix}
\end{equation}
Because our convention for $b_i$ differs from that in \cite{IN13},
the definition of $D_i$ (called $\pi_i$ in \cite{IN13})
is adjusted accordingly.
Then the following relations holds:
\begin{equation}
    D_i^2=-\beta D_i
\end{equation}
\begin{align}
D_0D_1D_0D_1=D_1D_0D_1D_0,\quad D_iD_{i+1}D_i=D_{i+1}D_iD_{i+1} \quad \text{for $i\ge 1$.}
\end{align}
As above, we use the same notation $D_i$ for types $\mathrm{B}$ and $\mathrm{C}$. In the definition of $D_i$, the only difference is that
$c(\alpha_i)$ for $i=0$ differs between the two types.
\begin{thm}[\cite{IN13}]\label{thm:DGQ}
Let $X$ be either $P$ or $Q$, and write $GX_\lambda$ to denote
$GP_\lambda$ as well as $GQ_\lambda$. Then, for $i\in I$ we have
\begin{align*}
D_iGX_{\lambda}(x\,|\,b)
&=\begin{cases}
GX_{s_i \lambda}(x\,|\,b) & \text{if $s_i w_\lambda < w_\lambda$,}\\
-\beta GX_\lambda(x\,|\,b) & \text{if $s_i w_\lambda > w_\lambda$,}
\end{cases}
\end{align*}
\end{thm}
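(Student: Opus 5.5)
We would prove this by the standard localization argument: characterize each $GX_\lambda(x\,|\,b)$ by its values at the points $b_\mu$ through the vanishing property (Proposition~\ref{prop_van_pro}), and check that $D_iGX_\lambda$ has the same values as the claimed right-hand side.

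\emph{Step 1: uniqueness.} Since $\{GX_\mu(x\,|\,b)\}$ is an $\Rep$-basis and $GX_\mu(b_\nu\,|\,b)$ is upper triangular in $\mu,\nu$ with respect to containment, with nonzero diagonal, an element $f\in G\Gamma_\Rep(x)$ is determined by its values $f(b_\nu\,|\,b)$ for $\nu\in\SP$. In $\mathbb F$ we can always divide by the nonzero diagonal entries. For $GQ$ we work inside $G\Gamma^+_\Rep(x)$, which is $W_\infty$-stable, so that $D_i$ preserves it.

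\emph{Step 2: how $s_i$ acts on localizations.} We compute $(s_if)(b_\nu\,|\,b)$ in terms of $f(b_{s_i\nu}\,|\,b')$, where $b'=s_ib$.
\begin{itemize}
\item For $i\ge1$, the transposition of $b_i,b_{i+1}$ sends the point $b_\nu$ to the point $b_{s_i\nu}$ up to reordering of the entries, and $f$ is symmetric in $x$.
\item For $i=0$, the identity $s_0f(b_\nu\,|\,b)=f(\ominus b_1,b_\nu\,|\,\ominus b_1,b_2,\dots)$ uses the $K$-theoretic $Q$-cancellation property. When $\nu$ has part $1$, the pair $(\ominus b_1,b_1)$ cancels, which removes the part $1$; when $\nu$ has no part $1$, it adds one after the substitution $b_1\to\ominus b_1$.
\end{itemize}
This yields the GKM-type formula
$$(D_if)(b_\nu\,|\,b)=\frac{(s_if)(b_\nu\,|\,b)-(1+\beta c(-\alpha_i))f(b_\nu\,|\,b)}{c(-\alpha_i)},$$
expressed through the values of $f$ at $\nu$ and $s_i\nu$, with the equivariant parameters transformed accordingly.

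\emph{Step 3: comparing values.} Take $f=GX_\lambda$ and split into two cases.
\begin{itemize}
\item If $s_iw_\lambda>w_\lambda$, then $\lambda$ has no $i$-addable box relevant to the descent. We would show $s_iGX_\lambda=GX_\lambda$, i.e.\ the function is invariant; for $i\ge1$ this is symmetry of the defining formula in $b_i,b_{i+1}$ when neither occurs in a way distinguishing them. Then $D_iGX_\lambda=\frac{1}{c}\bigl(GX_\lambda-(1+\beta c)GX_\lambda\bigr)=-\beta GX_\lambda$.
\item If $s_iw_\lambda<w_\lambda$, we check that $D_iGX_\lambda$ satisfies the vanishing and normalization conditions characterizing $GX_{s_i\lambda}$. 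These are vanishing at $b_\nu$ for $s_i\lambda\not\subseteq\nu$ and the correct value at $\nu=s_i\lambda$; the diagonal value at $s_i\lambda$ comes from the ratio of products of factors $c(\cdot)$ over inversions. Alternatively, once vanishing is established, Step 1 plus triangularity reduces everything to the single diagonal value $\nu=s_i\lambda$.
\end{itemize}

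\emph{Main obstacle.} Vanishing alone characterizes $GX_\mu$ only up to the diagonal, and in $K$-theory the upper-triangular expansion of $D_iGX_\lambda$ may contain terms $GX_\mu$ with $\mu\supsetneq s_i\lambda$. The hardest step is therefore excluding these extra terms, i.e.\ proving that $D_iGX_\lambda$ is exactly $GX_{s_i\lambda}$ rather than $GX_{s_i\lambda}$ plus higher terms.

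For this we would instead work directly with the defining symmetrization formula in $n$ variables. For $i\ge1$, the operator $D_i$ acts only on the factor $[x|b]^k$ through a one-variable identity
$$D_i\,[x|b]^{k}=[x|b]^{k-1}\quad\text{when }k=i+1$$
(respectively an invariance statement otherwise), which commutes with the symmetrization. For $i=0$ we would use stability in $n$ together with the cancellation computation of Step 2. In either case we would follow \cite{IN13}, adjusting for the change of parameters $b_i\leftrightarrow\ominus b_i$.
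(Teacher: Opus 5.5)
The paper gives no proof of this statement; it is quoted from \cite{IN13}. Your sketch therefore has to stand on its own, and as written it does not.

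\textbf{The localization route does not close.} As you concede, Steps 1--3 cannot exclude higher terms, since vanishing plus the diagonal value does not pin down $GX_{s_i\lambda}$. Moreover, the first bullet of Step 3 rests on symmetry of the defining formula in $b_i,b_{i+1}$, which fails termwise in exactly the delicate case. (Also, $s_iw_\lambda>w_\lambda$ means $\lambda$ has no $i$-\emph{removable} box; it may have an addable one.) What can work is the direct computation in your last paragraph, but it omits both nontrivial cases.

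\textbf{Gap for $i\ge1$.} The factor attached to a part $i+1$ is $x[x|b]^{i}$ (resp.\ $[\![x|b]\!]^{i+1}$). The identity you need is $D_i[x|b]^{i}=[x|b]^{i-1}$; your version with $k=i+1$ is wrong, because $[x|b]^{i+1}$ is $s_i$-invariant. Since $D_i$ is linear over $s_i$-invariants, termwise application settles two cases: $i+1\notin\lambda$, and $i+1\in\lambda$ with $i\notin\lambda$.

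It does not settle the case where both $i$ and $i+1$ are parts, e.g.\ $\lambda=(2,1)$, $i=1$. There $s_iw_\lambda>w_\lambda$, yet the summands are not $s_i$-invariant. Applying $D_i$ produces the symmetrization with the non-strict index $(\dots,i,i,\dots)$, and you must show this equals $-\beta GX_\lambda$. This follows by pairing $w$ with $w\circ(j\ j{+}1)$ and using, for $F$ symmetric in $x,y$,
\[
F\,\frac{x\oplus y}{x\ominus y}+F\,\frac{y\oplus x}{y\ominus x}=-\beta\,F\,(x\oplus y),\qquad
F\,(x\ominus b_i)\frac{x\oplus y}{x\ominus y}+F\,(y\ominus b_i)\frac{y\oplus x}{y\ominus x}=F\,(x\oplus y).
\]

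\textbf{Gap for $i=0$ with $\lambda_r=1$.} This case is not argued at all. Neither ``stability in $n$'' nor the localization compatibility of Step 2 supplies it.

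Directly: insert $t=\ominus b_1$ as an extra variable and replace $b_1$ by $t$. Terms with $t$ in a row of length $\ge2$ vanish, because of the factor $t\ominus t$. In the remaining terms, the cross factor $\frac{x\oplus t}{x\ominus t}=\frac{x\ominus b_1}{x\oplus b_1}$ restores $b_1$ in every row of length $\ge2$. This proves $s_0GX_\lambda=GX_\lambda$ when $1\notin\lambda$.

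When $\lambda_r=1$, you must show
\[
s_0GX_\lambda-(1+\beta c(-\alpha_0))GX_\lambda=c(-\alpha_0)\,GX_{(\lambda_1,\dots,\lambda_{r-1})}.
\]
Group by the variables occupying rows $1,\dots,r-1$. Write $Z_k=1+\beta z_k$ for the remaining $m=n-r+1$ variables and $T=1+\beta t$. In both types B and C the equality then reduces to
\[
\sum_{a=1}^{m}\frac{T(Z_a^2-1)}{Z_a-T}\prod_{k\neq a}\frac{(Z_aZ_k-1)Z_k}{Z_a-Z_k}+\prod_{k=1}^{m}\frac{(TZ_k-1)Z_k}{T-Z_k}=1.
\]
This follows from the residue theorem applied to $\frac{T}{U(U-T)}\prod_k\frac{(UZ_k-1)Z_k}{U-Z_k}$, which has poles at $U=Z_a$, $T$ and $0$ and no residue at $\infty$. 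This identity is the real content of the $i=0$ case, and it is what your proposal is missing.
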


\subsection{Factorial $gp$- and $gq$-functions}
Let $\hat{\Lambda}(y)$ be the completed ring of
symmetric functions in $y=(y_1,y_2,\ldots)$;
an element of $\hat{\Lambda}(y)$ is a formal sum
$\sum_{n=0}^\infty f_n$, where $f_n$ is a symmetric function of degree $n$. Define
$\hat{\Lambda}_\Rep(y):=\Rep\otimes_{\Z}\hat{\Lambda}(y).$

There exist unique families
$\{gp_\lambda(y\,|\,b)\}_{\lambda\in \SP}$ and
$\{gq_\lambda(y\,|\,b)\}_{\lambda\in \SP}$
in $\hat{\Lambda}_\Rep(y)$
such that
\begin{align}
\label{eq:GQ-gp}
\prod_{i,j=1}^\infty \frac{1-\bar{x}_i y_j}{1-x_i y_j}
&=\sum_{\lambda\in \SP} GQ_\lambda(x\,|\,b)\, gp_\lambda(y\,|\,b),
\\
\label{eq:GP-gq}
\prod_{i,j=1}^\infty \frac{1-\bar{x}_i y_j}{1-x_i y_j}
&=\sum_{\lambda\in \SP} GP_\lambda(x\,|\,b)\, gq_\lambda(y\,|\,b).
\end{align}


Define the following element in $\hL_\Rep(y)$
\begin{equation}
\Omega(b_i|y):=
\prod_{j=1}^\infty
\frac{1-\overline{b}_iy_j}{1-b_i y_j}.
\end{equation}

The group $\Winf$ acts on $\hL_\Rep{(y)}$ by
\begin{align*}
s_0 f(x|\, b)
  &= \Omega(b_1|y)f(x\mid \ominus b_1, b_2, b_3, \ldots), \\
s_i f(x|\, b)
  &= f(x \mid \ldots, b_{i+1}, b_i, \ldots)
  \quad \text{for } i \ge 1.
\end{align*}
For $i \in I$, the operator $T_i$ on $\hat{\Lambda}_\Rep(y)$ is defined by
\begin{equation}\label{eq_T_i}
T_i f(y\,|\,b)
  = c(\alpha_i)^{-1}\bigl(s_i f(y\,|\,b) - f(y\,|\,b)\bigr),
\end{equation}
where $c(\alpha_i)$ is defined by \eqref{eq_we_C_infty} in type $\mathrm{C}_\infty$
and by \eqref{eq_we_B_infty} in type $\mathrm{B}_\infty$.
These operators satisfy
\begin{equation}
    T_i^2=\beta T_i\quad \text{for $i \in  I$},\quad and
\end{equation}
\begin{align}
  T_0T_1T_0T_1=T_1T_0T_1T_0,\quad T_iT_{i+1}T_i=T_{i+1}T_iT_{i+1} \quad \text{for $i\ge 1$}  .
\end{align}


\begin{thm}[\cite{NN:Uni}]\label{thm_ope_des}
Let $x$ be either $p$ or $q$, and write $gx_\lambda$ for
$gp_\lambda$ or $gq_\lambda$ accordingly. Then for $i\in I$ we have
\[
T_i gx_\lambda(y\,|\,b)=
\begin{cases}
gx_{s_i \lambda}(y\,|\,b)
& \text{if $s_i w_\lambda>w_\lambda$ and $s_i w_\lambda\in \Winfgr$,}\\
0
& \text{if $s_i w_\lambda>w_\lambda$ and $s_i w_\lambda\notin \Winfgr$,}\\
\beta \, gx_{\lambda}(y\,|\,b)
& \text{if $s_i w_\lambda<w_\lambda$.}
\end{cases}
\]
Moreover, for $\lambda\in\SP$, we have
\[
gx_\lambda(y\,|\,b)=T_{w_\lambda}(1).
\]
\end{thm}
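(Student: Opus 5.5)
The plan is to deduce the statement from duality: the Cauchy identities \eqref{eq:GQ-gp} and \eqref{eq:GP-gq}, together with Theorem~\ref{thm:DGQ} for the Demazure operators $D_i$ on the $GX$ side. The guiding principle is that $T_i$ should be the adjoint of $D_i$ with respect to the pairing defined by the kernel
\[
\Omega(x|y)=\prod_{i,j}\frac{1-\bar x_i y_j}{1-x_iy_j},
\]
up to the sign and normalization forced by $D_i^2=-\beta D_i$ versus $T_i^2=\beta T_i$.

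\textbf{Step 1 (kernel identity).} Show that for each $i\in I$
\[
T_i^{(y)}\,\Omega(x|y)=\text{(an explicit operator built from $D_i^{(x)}$)}\,\Omega(x|y).
\]
The case $i\ge1$ is straightforward: the kernel does not involve $b$, so both sides are controlled by how each basis transforms under $b_i\leftrightarrow b_{i+1}$.

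The case $i=0$ is the real content and the main obstacle. It rests on the identity
\[
s_0^{(x)}\Omega(x|y)=\Omega(\ominus b_1|y)\,\Omega(x|y).
\]
Since $\Omega(\ominus b_1|y)=\Omega(b_1|y)^{-1}$, this cancels against the factor $\Omega(b_1|y)$ appearing in the definition of $s_0$ on $\hL_\Rep(y)$. I would verify it directly from the definition of $s_0$ on $G\Gamma_\Rep(x)$, which prepends the variable $\ominus b_1$ and replaces $b_1$ by $\ominus b_1$. The conclusion is that $s_0^{(x)}\otimes s_0^{(y)}$ fixes $\sum_\lambda GX_\lambda(x|b)\,gx_\lambda(y|b)$.

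\textbf{Step 2 (adjointness).} More generally, the invariance of the Cauchy sum under the diagonal action $s_i^{(x)}\otimes s_i^{(y)}$, for all $i$, gives
\[
\sum_\lambda (s_iGX_\lambda)\,(s_igx_\lambda)=\sum_\lambda GX_\lambda\, gx_\lambda .
\]
Rewrite $s_i$ on the $x$ side as $c(-\alpha_i)D_i+1+\beta c(-\alpha_i)$, and on the $y$ side as $c(\alpha_i)T_i+1$. Then use $c(-\alpha_i)=\ominus c(\alpha_i)$, which gives $(1+\beta c(\alpha_i))(1+\beta c(-\alpha_i))=1$. Expanding and simplifying yields the identity
\[
\sum_\lambda GX_\lambda\,(T_i\,gx_\lambda)=\sum_\lambda (D_i'GX_\lambda)\,gx_\lambda ,
\]
where $D_i'$ is the transpose-compatible modification of $D_i$. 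I expect $D_i'$ to be $D_i+\beta$ up to a scalar; this is the step where the signs from $D_i^2=-\beta D_i$ get absorbed.

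\textbf{Step 3 (read off coefficients).} Compare coefficients of the basis $GX_\mu$, using that $\{GX_\mu\}$ is a basis of $G\Gamma_\Rep$. Theorem~\ref{thm:DGQ} describes the matrix of $D_i$: $D_iGX_\lambda=GX_{s_i\lambda}$ when $s_iw_\lambda<w_\lambda$, and $-\beta GX_\lambda$ otherwise. Transposing this matrix gives the three cases of the statement:
\begin{itemize}
\item $T_i\,gx_\lambda$ picks up $gx_{s_i\lambda}$ exactly when $w_{s_i\lambda}=s_iw_\lambda>w_\lambda$ lies in $\Winfgr$;
\item it gives $0$ when $s_iw_\lambda\notin\Winfgr$, since then no $\mu$ has $s_i\mu=\lambda$ from above;
\item it gives $\beta\,gx_\lambda$ in the descent case.
\end{itemize}

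\textbf{Step 4 ($gx_\lambda=T_{w_\lambda}(1)$).} First compute $gx_\varnothing=1$, by setting $y=0$ or by using $GX_\varnothing=1$ and the vanishing property. Then induct along a reduced word of $w_\lambda$ from \eqref{eq_w_lam_A}: each prefix is Grassmannian, so the first case of Step 3 applies at every step.
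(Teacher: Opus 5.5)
The paper gives no argument of its own for this theorem; it cites \cite[Prop.~5.1]{NN:Uni} and \cite[Appendix A]{ISY2}. Your duality route is a legitimate self-contained derivation from facts the paper does state: the Cauchy identities \eqref{eq:GQ-gp}, \eqref{eq:GP-gq}, the two $W_\infty$-actions, and Theorem~\ref{thm:DGQ}. It also explains why $s_0$ on $\hL_\Rep(y)$ carries the factor $\Omega(b_1|y)$. Your computation $s_0^{(x)}\Omega(x|y)=\Omega(b_1|y)^{-1}\Omega(x|y)$ is exactly what gives $\sum_\lambda s_i(GX_\lambda)\,s_i(gx_\lambda)=\sum_\lambda GX_\lambda\,gx_\lambda$ for all $i\in I$. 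You must use the same type of $c(\alpha_0)$ on both sides: B for $GP/gq$, C for $GQ/gp$.

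Step 2 as written is not yet an argument, because the expansion does not collapse to an adjointness identity. Put $c=c(\alpha_i)$ and $\bar c=c(-\alpha_i)=\ominus c$, and use $(1+\beta\bar c)c=-\bar c$. The expansion then gives
\[
\sum_\lambda\Bigl(c\,(D_iGX_\lambda)(T_igx_\lambda)+(D_iGX_\lambda)\,gx_\lambda-GX_\lambda\,T_igx_\lambda+\beta\,GX_\lambda\,gx_\lambda\Bigr)=0,
\]
which still contains a cross term. To remove it, write $D_iGX_\nu=\sum_\kappa M_{\kappa\nu}GX_\kappa$, so that $M^2=-\beta M$. Extracting the coefficient of $GX_\kappa$ gives $(1-cM)\,(T_igx_\kappa)_\kappa=(M+\beta)\,(gx_\kappa)_\kappa$. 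Since $M^2=-\beta M$, we have $(1-cM)^{-1}=1-\bar cM$ and $(1-\bar cM)(M+\beta)=M+\beta$, hence
\[
T_i\,gx_\kappa=\beta\,gx_\kappa+\sum_\nu M_{\kappa\nu}\,gx_\nu .
\]
So $D_i'=D_i+\beta$ exactly, not merely up to a scalar, and Step~3 needs that exactness. Whenever $s_iw_\lambda>w_\lambda$, whether or not $s_iw_\lambda\in\Winfgr$, the diagonal entry $-\beta$ of $D_i$ cancels the $+\beta$. Only after that cancellation does the presence or absence of a $\mu$ with $s_i\mu=\lambda$ from above decide between $gx_{s_i\lambda}$ and $0$. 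With this filled in, your three cases follow.

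There is also a small slip in Step~4. Setting $y=0$ only yields $gx_\lambda(0|b)=\delta_{\lambda\varnothing}$. To get $gx_\varnothing=1$, set $x=0$ and use $GX_\lambda(0|b)=0$ for $\lambda\neq\varnothing$, which is the vanishing property with $\mu=\varnothing$. The induction itself is fine: right factors of a reduced word of $w_\lambda\in\Winfgr$ again lie in $\Winfgr$, so the first case applies at every step.
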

\begin{proof}
The result is due to \cite[Prop.~5.1]{NN:Uni} (see \cite[Appendix A]{ISY2} for a proof).
\end{proof}

   \section{Free-Fermion formalism}\label{sec_Free-Fermion formalism}
   In this section, after recalling neutral fermions and the Fock space, we review the results of Iwao \cite{Iwao2023-2}. In \S~\ref{sec:FacGPGQ}, we express the factorial \(GP\)- and \(GQ\)-functions as vacuum expectation values.
In \S~\ref{se:fac-gq}, we give a vacuum expectation value formula (Theorem \ref{thm: gq VEV}) for the factorial \(gq\)-functions. As consequences, we obtain their generating function formula (Corollary \ref{cor:gen-fac-gq})  and Pfaffian formula in \S~ \ref{sec:Pf_gq}. In \S~ \ref{cor:gen-fac-gq}, we present a general expansion theorem (Theorem \ref{thm:gq(xb)}), whose proof is given in \S~ \ref{Lem:phi-Phi}.
In \S~ \ref{sec:conj}, we state conjectures on the factorial $gp$-functions.

\subsection{Neutral Fermions and Fock spaces}
In this section, we give a brief summery of neutral fermions, Fock module, and vacuum expectation value.

Let $\mathcal{C}$ be the accociative $\mathbb{Q}(\beta)-$algebra
defined by the generators $\{\phi_{n}\}_{n\in \mathbb{Z}}$ and the relations:
\begin{align}
[\phi_m,\phi_n]_{+}=2(-1)^{m}\delta_{m+n,0},\label{phi-relation}
\end{align} where $[A,B]_{+}:=AB+BA$ is the \emph{anti-commutator}.
Let $\mathcal{F}$ denote the left $\mathcal{C}$--module
generated by the vector $\rvac$
such that \begin{equation}\phi_{n}\rvac=0
\quad
\text{for $n<0.$}
\label{eq:rvac}
\end{equation}
The vector $\rvac$ is called the \emph{vacuum vector}.
Let $\mathcal{F}^\dagger$ denote the right $\mathcal{C}$--module
generated by the vector $\langle 0|$, the \emph{dual vacuum vector},
such that \begin{equation}
    \lvac\phi_n=0
    \quad\text{for $n>0.$}
    \label{eq:lvac}
\end{equation}
$\mathcal{F}$ (resp. $\mathcal{F}^\dagger$ ) is called the \emph{Fock module}  (resp. \emph{dual Fock module}) of $\mathcal{C}$. The Fock space $\mathcal{F}$  decomposes into two subspaces $\mathcal{F}_{\mathrm{even}}$ and $\mathcal{F}_{\mathrm{odd}}$, where $\mathcal{F}_{\mathrm{even}}$(resp. $\mathcal{F}_{\mathrm{odd}}$) is generated by all vectors obtained from $\rvac$ by applying even (resp. odd) numbers of $\phi_n(n\geq 0)$. There exists an anti-algebra automorphism $x\mapsto x^*$ of $\mathcal{C}$ as $\mathbb{Q}(\beta)$-algebra such that $\phi^*_n=(-1)^n\phi_{-n}$ for $n\in \Z$ and $(xy)^*=y^*x^*$.
There is a unique non-degenerate bilinear form $\langle\,\cdot\, |\,\cdot\, \rangle: \mathcal{F}^\dagger\times \mathcal{F}\rightarrow \mathbb{Q}(\beta)$ such that
\begin{align}
 \langle ua  | v \rangle&=\langle u  | av \rangle
\quad \text{for $u\in \mathcal{F}^\dagger,\;
v\in \mathcal{F},\;a\in \mathcal{C},$} \\
\lvac \mathrm{vac} \rangle&=1.
\end{align}
For $u\in \mathcal{F}^\dagger$
and $v\in \mathcal{F}$, we denote
$u$, $v$ by $\langle u|$,
$|v\rangle$ respectively. Moreover, for $a\in \mathcal{C}$, we denote the value
$\langle ua|v\rangle=\langle u|av\rangle$ by $\langle u|a| v\rangle$. In particular, $\lvac   a\rvac$  is called the \emph{vacuum expectation value} of $a\in \mathcal{C}.$

For example, we have
\begin{equation}
\lvac  \phi_m\phi_n\rvac=\begin{cases}
2(-1)^m\delta_{m+n,0}
& (n>0)\\
\delta_{m,0}& (n=0)\\
0 & (n<0)
\end{cases}
\label{eq:phiphi}
\end{equation}

If $A$ is a skew-symmetric matrix
of even size, we denote its Pfaffian by $\mathrm{Pf}(A)$.
\begin{prop}[Wick's theorem, \cite{DJM}, \cite{AZ}]
 For $v_1,\ldots,v_r\in \bigoplus_{n\in \Z}\mathbb{Q}(\beta) \psi_n$,
\begin{align}
\lvac  v_1\cdots v_r\rvac=
\begin{cases}
\mathrm{Pf}(\lvac  v_iv_j\rvac)_{1\le i,j\le r}& \text{if $r$ is even,}\\
0 & \text{if $r$ is odd.}
\end{cases}
\end{align}
\end{prop}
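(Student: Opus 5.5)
The plan is to prove Wick's theorem by induction on $r$, using only the anticommutation relation \eqref{phi-relation} and the vacuum conditions \eqref{eq:rvac}, \eqref{eq:lvac}. (The $v_i$ are meant to lie in $\bigoplus_{n}\mathbb{Q}(\beta)\phi_n$; the symbol $\psi_n$ in the statement should read $\phi_n$.) Both sides are multilinear in $(v_1,\ldots,v_r)$, so I will reduce to the case where each $v_i=\phi_{n_i}$ is a single generator, extending by linearity at the end. For every $a\in\mathcal{C}$, $\lvac a\rvac$ is a well-defined scalar. The cases $r=0$ (value $1=\mathrm{Pf}$ of the empty matrix) and $r=1$ hold: $\lvac\phi_n\rvac=0$ for $n\neq0$ by the vacuum conditions, and $\lvac\phi_0\rvac=0$ by the parity grading, since $\phi_0\rvac\in\mathcal{F}_{\mathrm{odd}}$ pairs to zero with $\lvac$. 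More generally, parity gives $\lvac v_1\cdots v_r\rvac=0$ for all odd $r$, which settles the odd case.

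For even $r\ge 2$, the key step is to split $v_1=v_1^++v_1^0+v_1^-$, where $v_1^+$ collects the modes $\phi_n$ with $n>0$, $v_1^-$ the modes with $n<0$, and $v_1^0$ is the $\phi_0$ part. Then $\lvac v_1^+=0$, and $v_1^-$ can be moved to the right past $v_2,\ldots,v_r$ using $v_1^-v_j=-v_jv_1^-+[v_1^-,v_j]_+$, where the anticommutator is a scalar. Since $v_1^-\rvac=0$, this yields
\[
\lvac v_1^-v_2\cdots v_r\rvac=\sum_{j=2}^r(-1)^{j}[v_1^-,v_j]_+\lvac v_2\cdots\widehat{v_j}\cdots v_r\rvac .
\]
The term $\phi_0$ needs separate care, because it kills neither vacuum. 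I will use $\phi_0^2=1$ and the fact that $\lvac\phi_0=\phi_0^*$-dual behaves symmetrically: write $\phi_0=\tfrac12(\phi_0+\phi_0)$ and move one half to the left (it does not annihilate $\lvac$, so this needs the additional identity $\lvac\phi_0 X\rvac$ for odd-length $X$). A cleaner route is to push $v_1^0$ to the right as well, obtaining $\lvac v_2\cdots v_r\phi_0\rvac$ plus contraction terms, and then to show $\lvac Y\phi_0\rvac=\lvac \phi_0 Y\rvac$ for odd $Y$ via the anti-automorphism $*$. Averaging the two expressions produces contractions with coefficient $\tfrac12[\phi_0,v_j]_+$, and these agree with $\lvac\phi_0 v_j\rvac$ by \eqref{eq:phiphi}.

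Collecting terms, I expect the recursion
\[
\lvac v_1\cdots v_r\rvac=\sum_{j=2}^r(-1)^j\lvac v_1v_j\rvac\lvac v_2\cdots\widehat{v_j}\cdots v_r\rvac ,
\]
once I check that the scalar coefficients produced above equal $\lvac v_1v_j\rvac$. For the $v_1^-$ part this holds because $\lvac v_1^-v_j\rvac=[v_1^-,v_j]_+$, as $v_1^-\rvac=0$; for the $v_1^+$ part it holds because $\lvac v_1^+v_j\rvac=0$, consistent with \eqref{eq:phiphi}. This recursion is exactly the expansion of the Pfaffian along its first row, so applying the inductive hypothesis to the $(r-2)$-fold expectations completes the proof. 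The step I expect to be the main obstacle is handling the zero mode $\phi_0$ consistently. I would first check the case $r=2$ against \eqref{eq:phiphi}, including the case $m=n=0$, and then verify that the averaging argument yields the recursion with exactly the coefficient $\lvac\phi_0v_j\rvac$.
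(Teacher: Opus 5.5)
The overall scheme works: reduce to monomials, handle odd $r$ by parity, contract the $v_1^{-}$ part to the right and discard the $v_1^{+}$ part. Suppose one knew $\lvac Y\phi_0\rvac=\lvac\phi_0Y\rvac$ for every product $Y$ of an odd number of generators. Then your averaging step would indeed give the coefficient $\tfrac12[\phi_0,v_j]_+=\lvac\phi_0v_j\rvac$.

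The gap is that this identity is never proved, and the tool you name does not prove it. Applying $*$ turns $\lvac Y\phi_0\rvac$ into $\lvac\phi_0Y^*\rvac$, even granting $\lvac a^*\rvac=\lvac a\rvac$, which itself needs an argument the paper does not supply. Here $Y^*$ has its factors reversed and each $\phi_n$ replaced by $(-1)^n\phi_{-n}$. Comparing $\lvac\phi_0Y^*\rvac$ with $\lvac\phi_0Y\rvac$ is no easier than the original question. Since $\phi_0$ is the one mode that kills neither vacuum, the case $v_1^0\neq0$ is the only non-routine part of the theorem. As written, your induction is therefore not closed.

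The repair is easy once you use the monomial reduction you already made. For $\lvac\phi_{n_1}\cdots\phi_{n_r}\rvac$ you need not expand along the first row.
\begin{itemize}
\item If some $n_i<0$, move $\phi_{n_i}$ to the right, where it dies on $\rvac$.
\item If some $n_i>0$, move $\phi_{n_i}$ to the left, where it dies on $\lvac$.
\end{itemize}
The resulting sum of contractions is exactly the Pfaffian expansion along row $i$. Recall that the Pfaffian only uses the entries $\lvac v_iv_j\rvac$ with $i<j$, so the $(i,j)$ entry with $j<i$ is $-\lvac v_jv_i\rvac$. The contractions missing on the other side vanish consistently: $\lvac\phi_{n_j}\phi_{n_i}\rvac=0$ for $j<i$ when $n_i<0$, and $\lvac\phi_{n_i}\phi_{n_j}\rvac=0$ for $j>i$ when $n_i>0$.

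The only monomial left is $\phi_0^r$. Since $\phi_0^2=1$, it gives $\lvac\phi_0^r\rvac=1$ for $r$ even, and $\lvac\phi_0\rvac=0$ for $r$ odd. The Pfaffian of the skew matrix with all entries above the diagonal equal to $1$ is $1$, so the formula holds there too.

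Equivalently, your identity itself follows by an inner induction of the same kind. Contract a nonzero mode of $Y$ out of both $\lvac\phi_0Y\rvac$ and $\lvac Y\phi_0\rvac$, using $[\phi_0,\phi_n]_+=0$ for $n\neq0$, until $Y=\phi_0^k$. The paper itself gives no proof of this statement and only refers to the literature, so a self-contained argument along these lines is what is needed.
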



The \emph{neutral fermion field} is defined as $\phi(z)=\sum\limits_{n\in \mathbb{Z}}\phi_n z^n$, where $z$ is a formal variable. We have from \eqref{eq:phiphi}
\begin{align*}
\lvac  \phi(z) \phi(w)\rvac
&=\sum_{m,n\in \Z}
\lvac  \phi_m \phi_n\rvac z^mw^n\\
&=1+\sum_{n=1}^\infty 2(-1)^n z^{-n}w^n
\\
&=\frac{1-w/z}{1+w/z}.\end{align*}
\subsection{$\beta$ deformed Fermions}
We define two $\beta$-\emph{deformed fermion fields}  $\phi^{(\beta)}(z)=\sum\limits_{n\in \mathbb{Z}}\phi^{(\beta)}_n z^n$ and $\phi^{[\beta]}(z)=\sum\limits_{n\in \mathbb{Z}}\phi^{[\beta]}_n z^n$ via the following expansions:
\begin{align*}
&\sum_{n=0}^\infty\phi^{(\beta)}_n z^n=
\sum_{n=0}^\infty\phi_n \left(z+\frac{\beta}{2}\right)^n,\quad
\sum_{n=1}^\infty\phi^{(\beta)}_{-n} z^{-n}=
\sum_{n=1}^\infty\phi_{-n} \left(\frac{z^{-1}}{1+\frac{\beta}{2}z^{-1}}\right)^n,\\
&\sum_{n=1}^\infty\phi^{[\beta]}_n z^n=
\sum_{n=1}^\infty\phi_n \left(\frac{z}{1+\frac{\beta}{2}z}\right)^n,\quad
\sum_{n=0}^\infty\phi^{[\beta]}_{-n} z^{-n}=
\sum_{n=0}^\infty\phi_{-n} \left(z^{-1}+\frac{\beta}{2}\right)^n.
\end{align*}

From the vacuum conditions \eqref{eq:rvac}, \eqref{eq:lvac},
\begin{align}\phi^{[\beta]}_{-n}\rvac=\phi^{(\beta)}_{-n}\rvac=0,\quad \lvac\phi^{[\beta]}_{n}=\lvac\phi^{(\beta)}_{n}=0, \text{for $n>0$.}\label{phibet-vacu}
\end{align}
Then it follows from equation \eqref{phi-relation} that these operators satisfy the anticommutation relation
\begin{align}
[\phi^{(\beta)*}_m,\phi^{[\beta]}_n]_{+}=2\delta_{m,n}+\beta\delta_{m,n-1}.\label{phibeta-relation}
\end{align}
In addition,
\begin{align}
&\lvac\phi^{(\beta)}(u)\phi^{(\beta)}(w)\rvac=\frac{w^{-1}-u^{-1}}{w^{-1}\oplus u^{-1}},\label{Phivalue}\\
&\lvac\phi^{[\beta]*} (u)\phi^{[\beta]*}(w)\rvac=\frac{w-u}{w\oplus u},\label{phivalue}
\end{align}
where $\frac{w^{-1}-u^{-1}}{w^{-1}\oplus u^{-1}}$ and $\frac{w-u}{w\oplus u}$ are respectively expanded as
$$\frac{1-w u^{-1}}{1-w u^{-1}+\beta  u^{-1}}=1-(\beta+2w)u^{-1}+(2w^{2}+3\beta w+\beta^2)u^{-2}+\cdots,$$
and
$$\frac{1-u w^{-1}}{1+u w^{-1}+\beta  u}=1-(\beta+2w^{-1})u+(2w^{-2}+3\beta w^{-1}+\beta^2)u^2+\cdots.$$
 Define another two $\beta$-\emph{deformed fermion fields} 
as
\begin{align}
\Phi^{(\beta)}(z)&:=\sum\limits_{n\in \Z}\Phi_{n}^{(\beta)}z^{n}
=\frac{1}{2+\beta z^{-1}}\phi^{(\beta)}(z),\\
\Phi^{[\beta]}(z)&:=\sum\limits_{n\in \Z}\Phi_{n}^{[\beta]}z^{n}=\frac{1}{2+\beta z}\phi^{[\beta]}(z).
\label{eq:Phi(z) Psi(z)}
\end{align}
For any $n \in \mathbb{Z}$, these $\beta$-deformed fermions  are related to the neutral fermion through formal series expansions:
\begin{align*}
&\Phi_{n}^{(\beta)}=\frac{1}{2}\phi_{n}^{(\beta)}-\frac{\beta}{4}\phi_{n+1}^{(\beta)}+\frac{\beta^2}{8}\phi_{n+2}^{(\beta)}+\cdots,\\
&\Phi_{n}^{[\beta]}=\frac{1}{2}\phi_{n}^{[\beta]}-\frac{\beta}{4}\phi_{n-1}^{[\beta]}+\frac{\beta^2}{8}\phi_{n-2}^{[\beta]}+\cdots,
\end{align*}
Using \eqref{phibet-vacu}, we have the following annihilation rule
\begin{align}
\Phi^{(\beta)*}_{n}\rvac=\Phi^{[\beta]}_{-n}\rvac=0,\quad \lvac\Phi^{(\beta)}_{n}=0\quad \text{for $n>0$.}\label{Phibet-vacu}
\end{align}
These $\beta$-deformed fermions satisfies the following commutation relations
\begin{align}
[\Phi_{n}^{(\beta)*},\phi_{m}^{[\beta]}]_+=[\Phi_{n}^{[\beta]*},\phi_{m}^{(\beta)}]_+=\delta_{m,n}.\label{Phi-phi}
\end{align}

\subsection{$\beta$-deformed Boson-Fermion correspondence}
Let $p_n(x)=\sum\limits_{k>0}x_k^n$ be the $n$-th power sum in $x=(x_1,x_2,\cdots)$. The corresponding $\beta$-deformed power sums $p_n^{(\beta)}$ and $p_n^{[\beta]}$ are defined by:
\begin{align*}
&p_n^{(\beta)}(x)=p_n\left(\frac{x}{1+\beta/2}\right)=\sum_{i=0}^{\infty}\binom{-n}{i}({\beta}/{2})^ip_{n+i}(x),\\
&p_n^{[\beta]}(y)=p_n(y+\beta/{2})-p_n(\beta/2)=\sum_{i=0}^{n}\binom{n}{i}(\beta/2)^ip_{i}(y).
\end{align*}
For each odd integer $n$, we define the following operators acting on $\mathcal{F}$ and $\mathcal{F}^\dagger$
\begin{equation}
b_n=\frac{1}{4}\sum\limits_{i\in\mathbb{Z}}(-1)^i\phi_{-i-m}\phi_{i}.
\end{equation}
They satisfy following the Heisenberg relation, and the commutation relation with neutral Fermion:
\begin{equation}
[b_m,b_n]=\frac{m}{2}\delta_{m+n,0},\quad
[b_m,\phi_n]=\phi_{n-m}.
\end{equation}
 The  $\beta$-\emph{deformed Hamiltonian operators} 
are defined as
\begin{align*}
H^{(\beta)}(x)&=2\sum_{n=1,3,5,\cdots}\frac{p^{(\beta)}_n(x)}{n}b_n,\quad
H^{[\beta]}(y)=2\sum_{n=1,3,5,\cdots}\frac{p^{[\beta]}_n(y)}{n}b_n.
\end{align*}


Define
\begin{align}
\widehat{G\Gamma}(x)
&:=\mathbb{Q}(\beta)[\![p_1^{(\beta)}(x),p_3^{(\beta)}(x),\cdots]\!],\\
g\Gamma(y)
&:=\mathbb{Q}(\beta)[p_1^{[\beta]}(y),p_3^{[\beta]}(y),\cdots]\label{def_g_Gam}.
\end{align}
There exists a perfect pairing
\[
\widehat{G\Gamma}(x)\times g\Gamma(y)\longrightarrow \mathbb{Q}(\beta)
\]
such that for all odd partitions $\lambda$ and $\mu$
\begin{equation}
\langle p_\lambda^{(\beta)}(x), p_\mu^{[\beta]}(y)\rangle
=2^{-\ell(\lambda)}\delta_{\lambda\mu},
\label{eq:pp}
\end{equation}
 For odd partition $\lambda$,
\begin{equation}
p_\lambda^{(\beta)}(x)
:= p_{\lambda_1}^{(\beta)}(x)p_{\lambda_2}^{(\beta)}(x)\cdots,
\quad
p_\lambda^{[\beta]}(y)
:= p_{\lambda_1}^{[\beta]}(y)p_{\lambda_2}^{[\beta]}(y)\cdots.
\end{equation}

We define
\begin{equation}
\hat{\mathcal{F}}_{\mathrm{even}}=
\mathrm{Hom}_{\mathbb{Q}(\beta)}({\mathcal{F}}^\dagger_{\mathrm{even}},\mathbb{Q}(\beta)).
\end{equation}
There is a
linear isomorphism
$\mathcal{F}^\dagger_\mathrm{even}\rightarrow\mathcal{F}_{\mathrm{even}},\; \langle u|\mapsto
|u^*\rangle$ given by
\begin{equation}
|u^*\rangle:=a^*
\rvac \quad
\text{whenever  $\langle u|
=\lvac  a, \quad a\in \mathcal{C}.$}
\end{equation}
\begin{lem}[Boson-Fermion correspondence, \cite{Iwao2023-2}, \S~ 3.4]\label{lem:x1x2}
There exist $\mathbb{Q}(\beta)$-linear isomorphisms such that
\begin{align*}
\hat{\mathcal{F}}_{\mathrm{even}}&\longrightarrow \widehat{G\Gamma}(x),\quad
\langle u|\mapsto \lvac  e^{\mathcal{H}^{(\beta)}(x)}
|u^*\rangle,\\
\mathcal{F}_{\mathrm{even}}&\longrightarrow g\Gamma(y), \quad
|v\rangle\mapsto \lvac  e^{\mathcal{H}^{[\beta]}(y)}
|v\rangle.
\end{align*}
Moreover, via these isomorphisms,
the natural
pairing
$\hat{\mathcal{F}}_{\mathrm{even}}\times \mathcal{F}_{\mathrm{even}}\rightarrow \mathbb{Q}(\beta)$ corresponds to the pairing \eqref{eq:pp} on
$\widehat{G\Gamma}(x)\times g\Gamma(y)\rightarrow \mathbb{Q}(\beta).$
\end{lem}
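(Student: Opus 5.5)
The plan is to reduce the statement to the undeformed neutral-fermion boson--fermion correspondence and then carry the $\beta$-deformation along by explicit changes of variables. In the classical case ($\beta=0$), the maps $\langle u|\mapsto \lvac e^{H(x)}|u^*\rangle$ and $|v\rangle\mapsto \lvac e^{H(y)}|v\rangle$ are the standard isomorphisms of $\mathcal{F}_{\mathrm{even}}$ (and of its graded dual) onto $\mathbb{Q}[p_1,p_3,\ldots]$ (and its completion). Here $H(x)=2\sum_{n\ \mathrm{odd}}\frac{p_n(x)}{n}b_n$. The proof rests on the Heisenberg relations $[b_m,b_n]=\frac m2\delta_{m+n,0}$, on $b_n\rvac=0$ and $\lvac b_{-n}=0$ for $n>0$, and on $\mathcal{F}_{\mathrm{even}}$ being spanned by $b_{-\lambda}\rvac$ for odd partitions $\lambda$. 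I will take this as the starting point, via \cite{DJM} and \cite{Iwao2023-2}.

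First, I treat the $g\Gamma(y)$ side. Since $H^{[\beta]}(y)=2\sum_n \frac{p^{[\beta]}_n(y)}{n}b_n$, the map $|v\rangle\mapsto\lvac e^{H^{[\beta]}(y)}|v\rangle$ is the classical map composed with the substitution $p_n\mapsto p_n^{[\beta]}(y)$. I will compute
\[
\lvac e^{H^{[\beta]}(y)} b_{-\lambda}\rvac .
\]
Commuting $b_{-\lambda_i}$ to the left gives the factor $\prod_i p^{[\beta]}_{\lambda_i}(y)$. Hence the images of the basis $b_{-\lambda}\rvac$ are exactly the monomials $p^{[\beta]}_\lambda(y)$. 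These are algebraically independent generators of $g\Gamma(y)$, because $p_n^{[\beta]}=p_n+(\text{lower})$ is a unitriangular change of generators. So the map is a linear isomorphism.

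Second, I treat the dual side. An element of $\hat{\mathcal F}_{\mathrm{even}}$ is a possibly infinite linear combination of the dual basis $\langle u_\lambda|$ to $b_{-\lambda}\rvac$; up to scalars, $\langle u_\lambda|\propto\lvac b_{\lambda}$. Since $b_n^*=b_{-n}$ up to sign, $|u_\lambda^*\rangle\propto b_{-\lambda}\rvac$, and therefore
\[
\lvac e^{H^{(\beta)}(x)}|u_\lambda^*\rangle\propto p^{(\beta)}_\lambda(x).
\]
Infinite sums are allowed because $\widehat{G\Gamma}(x)$ is the formal power series ring in the $p_n^{(\beta)}$. This gives a bijection onto $\widehat{G\Gamma}(x)$.

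Third, I match the pairings. The natural pairing of $\langle u_\lambda|$ with $b_{-\mu}\rvac$ is $\lvac b_\lambda b_{-\mu}\rvac$. By the Heisenberg relations this equals
\[
\delta_{\lambda\mu}\, z_\lambda\, 2^{-\ell(\lambda)},
\]
where $z_\lambda$ is the usual centralizer factor. Comparing with \eqref{eq:pp}, after the normalizations fixed in the first two steps, shows that the pairings agree.

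The main obstacle is bookkeeping: fixing the normalization of $\langle u_\lambda|$, and the signs in $b_n^*$ and in the $*$-involution on $\phi_n$, consistently enough that the constant in \eqref{eq:pp} comes out exactly $2^{-\ell(\lambda)}$. I would settle this by checking $\ell(\lambda)=1$ directly, then using multiplicativity.
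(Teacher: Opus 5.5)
\textbf{Where the proposal fails: the pairing constant in step three.} The paper gives no proof of this lemma; it cites Iwao. Your reduction via $e^{H}b_{-k}e^{-H}=b_{-k}+p_k$ to the classical spanning set $b_{-\lambda}\rvac$ is the standard route, and your first two steps are essentially fine. The third step is where it breaks. Your own computation gives $\lvac b_\lambda b_{-\mu}\rvac=\delta_{\lambda\mu}z_\lambda 2^{-\ell(\lambda)}$, whereas \eqref{eq:pp} as printed gives $2^{-\ell(\lambda)}\delta_{\lambda\mu}$.

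There is no normalization left to adjust. Both maps are fixed by the statement, and $\phi_n^*=(-1)^n\phi_{-n}$ gives $b_n^*=b_{-n}$ exactly, not up to sign. So $\lvac b_\lambda$ is sent to exactly $p^{(\beta)}_\lambda(x)$, and $b_{-\mu}\rvac$ to exactly $p^{[\beta]}_\mu(y)$. The discrepancy $z_\lambda$ is not a sign, so no sign bookkeeping can remove it. The check you propose at $\ell(\lambda)=1$ would give $\lvac b_nb_{-n}\rvac=n/2$ against $1/2$, which already fails for $n=3$. So your assertion that the constant ``comes out exactly $2^{-\ell(\lambda)}$'' is false.

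\textbf{What your argument actually proves.} It proves compatibility with $\langle p^{(\beta)}_\lambda,p^{[\beta]}_\mu\rangle=2^{-\ell(\lambda)}z_\lambda\delta_{\lambda\mu}$, and this is the normalization the paper needs. Write $p^{(\beta)}_n(x)=\sum_iX_i^n$ with $X_i=x_i/(1+\frac{\beta}{2}x_i)$, and $p^{[\beta]}_n(y)=\sum_j\bigl((y_j+\frac{\beta}{2})^n-(\frac{\beta}{2})^n\bigr)$. Then one checks $\prod_{i,j}\frac{1-\bar x_iy_j}{1-x_iy_j}=\exp\bigl(2\sum_{n\ \mathrm{odd}}p^{(\beta)}_n(x)p^{[\beta]}_n(y)/n\bigr)=\sum_{\lambda\ \mathrm{odd}}2^{\ell(\lambda)}z_\lambda^{-1}p^{(\beta)}_\lambda(x)p^{[\beta]}_\lambda(y)$. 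Hence the pairing under which \eqref{eq:GQ-gp} and \eqref{eq:GP-gq} express duality carries the factor $z_\lambda$, which \eqref{eq:pp} has dropped. This is the duality used in the proof of Theorem~\ref{thm: gq VEV}. You should state this and conclude with the corrected constant, rather than defer it to bookkeeping.

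\textbf{Two smaller points.} First, $p^{[\beta]}_n=p_n+\cdots$ is not a change of generators of $\mathbb{Q}(\beta)[p_1,p_3,\ldots]$, because the lower terms involve even power sums. What you need instead is algebraic independence of $p^{[\beta]}_1,p^{[\beta]}_3,\ldots$, which follows from top-degree parts. Dually, you need independence of the $p^{(\beta)}_\lambda$ in the completion, which follows from lowest-degree parts. Second, you read $\hat{\mathcal F}_{\mathrm{even}}$ as the full dual of $\mathcal F_{\mathrm{even}}$, i.e.\ formal sums of bras. That is the reading under which the statement makes sense, even though the paper writes $\mathrm{Hom}(\mathcal F^\dagger_{\mathrm{even}},\mathbb{Q}(\beta))$.
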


\subsection{ Free-Fermion formalism of $gq,gp,GP,GQ$--- Non-equivariant case }
We correct  the results
of free-Ferminon formalism
for the non-equivariant
 case.

Let $$\theta=2\sum_{n=1,3,5,\cdots}\left(\frac{\beta}{2}\right)^n\frac{b_n}{n},\quad \theta^*=2\sum_{n=1,3,5,\cdots}\left(\frac{\beta}{2}\right)^n\frac{b_{-n}}{n}.$$
We have
\begin{equation}
e^{\theta}\rvac= \rvac,\quad \lvac  e^{\theta^*}  =\lvac .
\end{equation}
\begin{lem} \label{Lem:beta-theta}We have
\begin{align}
e^{\theta}
\phi^{[\beta]}(z)e^{-\theta}&=(1+\beta z)\phi^{[\beta]}(z),
\label{eq:phi-theta}\\
\label{eq:phi-theta-star}
e^{\theta^*}
\phi^{(\beta)}(z)e^{-\theta^*}&=(1+\beta z^{-1})\phi^{(\beta)}(z).
\end{align}
\end{lem}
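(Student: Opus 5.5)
The plan is to use the commutator $[b_m,\phi_n]=\phi_{n-m}$, which in generating-function form reads $[b_m,\phi(z)]=z^{m}\phi(z)$. Here I use the paper's intended convention that $b_m$ lowers the index by $m$, so that $[b_m,\phi(z)]=\sum_n \phi_{n-m}z^n = z^m\phi(z)$. This is the point where the sign conventions must be fixed consistently.

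\emph{Step 1: the undeformed field.} For $\theta=2\sum_{n \text{ odd}}(\beta/2)^n b_n/n$, the commutator $[\theta,\phi(z)]$ is a scalar multiple of $\phi(z)$, so the adjoint action exponentiates directly:
\begin{equation*}
e^{\theta}\phi(z)e^{-\theta}
=\exp\Bigl(2\sum_{n\ \mathrm{odd}}\frac{(\beta z/2)^n}{n}\Bigr)\phi(z)
=\frac{1+\beta z/2}{1-\beta z/2}\,\phi(z),
\end{equation*}
using $2\sum_{n\ \mathrm{odd}} t^n/n=\log\frac{1+t}{1-t}$. Similarly, since $[b_{-m},\phi(z)]=z^{-m}\phi(z)$,
\begin{equation*}
e^{\theta^*}\phi(z)e^{-\theta^*}=\frac{1+\beta z^{-1}/2}{1-\beta z^{-1}/2}\,\phi(z).
\end{equation*}

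\emph{Step 2: rewrite $\phi^{[\beta]}(z)$ through $\phi$.} From the defining expansions, the nonnegative part of $\phi^{[\beta]}(z)$ is $\phi(z)$ evaluated at $w=z/(1+\beta z/2)$. The negative part comes from $w^{-1}=z^{-1}+\beta/2$, which is the same substitution. However, the split between $n\ge1$ and $n\le0$ terms differs: the $n=0$ term sits with the negative part in $\phi^{[\beta]}$. This needs care, because $\phi_0$ appears with coefficient $(z^{-1}+\beta/2)^0=1$, consistent with the full substitution. Hence formally
\begin{equation*}
\phi^{[\beta]}(z)=\phi(w),\qquad w=\frac{z}{1+\beta z/2}.
\end{equation*}
Applying Step 1 at the point $w$ gives the factor $\frac{1+\beta w/2}{1-\beta w/2}$. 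Substituting $w$, the numerator becomes $\frac{1+\beta z}{1+\beta z/2}$ and the denominator becomes $\frac{1}{1+\beta z/2}$, so the ratio is $1+\beta z$. This proves \eqref{eq:phi-theta}.

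\emph{Step 3: the starred identity.} In the same way, $\phi^{(\beta)}(z)=\phi(w)$ with $w=z+\beta/2$, since $w^{-1}=z^{-1}/(1+\beta z^{-1}/2)$. Then
\begin{equation*}
\frac{1+\beta w^{-1}/2}{1-\beta w^{-1}/2}=\frac{z+\beta}{z}=1+\beta z^{-1},
\end{equation*}
which gives \eqref{eq:phi-theta-star}.

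\emph{Main obstacle: justifying the substitution.} The expected difficulty is making the substitution $\phi^{[\beta]}(z)=\phi(w)$ legitimate as formal series and checking that multiplication by the rational factor is computed in the same expansion direction. The factor for $\phi^{[\beta]}$ should be expanded in positive powers of $z$ (since $\theta$ involves only the $b_n$ with $n>0$), and for $\phi^{(\beta)}$ in powers of $z^{-1}$.

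To handle this, I would first verify the identity coefficientwise using the operator identity $e^{\theta}\phi_n e^{-\theta}=\sum_k c_k\phi_{n-k}$, where the $c_k$ are the coefficients of $\frac{1+\beta t/2}{1-\beta t/2}$. Each such sum is finite in the relevant grading after $\beta$-adic completion. I would then transport this through the invertible change of variables $z\mapsto w$ mode by mode. If the sign convention in $[b_m,\phi_n]$ produces the inverse factor instead, the conventions should be matched against the vacuum conditions $e^{\theta}\rvac=\rvac$, which require $\theta$ to annihilate $\rvac$.
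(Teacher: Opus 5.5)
Your argument is correct, but it takes a different route from the paper. The paper does no computation here: it cites \cite[Proposition 4.5]{Iwao2023-2} for \eqref{eq:phi-theta-star}, and obtains \eqref{eq:phi-theta} by applying the anti-automorphism $*$ to \cite[Proposition 9.2]{Iwao2023-2}.

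You instead derive both identities from scratch by a single mechanism.
\begin{itemize}
\item The relation $[b_m,\phi(z)]=z^m\phi(z)$ exponentiates to the undeformed conjugation factor $\frac{1+\beta z^{\pm1}/2}{1-\beta z^{\pm1}/2}$.
\item The deformed fields are the undeformed field composed with the substitutions $z\mapsto z/(1+\beta z/2)$ and $z\mapsto z+\beta/2$.
\item Under these substitutions that factor collapses to $1+\beta z$ and $1+\beta z^{-1}$ respectively.
\end{itemize}
This gives a self-contained proof that explains where the factor $1+\beta z^{\pm1}$ comes from. It also treats the two identities symmetrically: there is no need for $*$, only $b_n$ versus $b_{-n}$. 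The cost is that you must justify the formal manipulations yourself, which the citation avoids.

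Your justification holds up. Writing $w^n=z^n(1+\beta z/2)^{-n}\in z^n\mathbb{Q}[[\beta z]]$ for every $n\in\mathbb{Z}$ reproduces both halves of the definition of $\phi^{[\beta]}(z)$, so the worry about where $\phi_0$ sits goes away. In $\sum_{n,k}c_k\phi_{n-k}w^n$, with $c_k$ the coefficients of $\frac{1+\beta t/2}{1-\beta t/2}$, each coefficient of $\beta^N z^j$ is a finite sum. The series $\sum_k c_k w^k$ is an honest composition in $\mathbb{Q}[[\beta z]]$. The same works with $\beta z^{-1}$ for $\phi^{(\beta)}$.

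Your closing hedge about the sign convention is unnecessary. The relation $[b_m,\phi_n]=\phi_{n-m}$ follows directly from the definition of $b_m$ and \eqref{phi-relation}: the two contractions contribute $\frac12\phi_{n-m}$ and $-\frac12(-1)^m\phi_{n-m}$, which sum to $\phi_{n-m}$ for odd $m$. It is also consistent with $b_m\rvac=0$ for $m>0$, and hence with $e^{\theta}\rvac=\rvac$.
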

\begin{proof}Equation \eqref{eq:phi-theta} is obtained from \cite[Proposition 9.2]{Iwao2023-2} by applying the anti-automorphism $*$. Equation
\eqref{eq:phi-theta-star} is given in \cite[Proposition 4.5]{Iwao2023-2}.
\end{proof}

For a strict partition $\lambda$ of length $r$, we define the following vectors
 \begin{align*}&
 |\lambda\rangle_{gq}=\begin{cases}
    \phi_{\lambda_1}^{[\beta]}e^{-\theta}\phi_{\lambda_2}^{[\beta]}e^{-\theta}\cdots\phi_{\lambda_r}^{[\beta]}e^{-\theta}\rvac, & \text{$r:$ even}\\
\phi_{\lambda_1}^{[\beta]}e^{-\theta}\phi_{\lambda_2}^{[\beta]}e^{-\theta}\cdots\phi_{\lambda_r}^{[\beta]}e^{-\theta}\phi_{0}^{[\beta]}e^{-\theta}\rvac, & \text{$r:$ odd}
\end{cases}\\
&|\lambda\rangle_{GP}=\begin{cases}
    \Phi_{\lambda_1}^{(\beta)}e^{\theta^*}\Phi_{\lambda_2}^{(\beta)}e^{\theta^*}\cdots\Phi_{\lambda_r}^{(\beta)}e^{\theta^*}\rvac, & \text{$r:$ even}\\
\Phi_{\lambda_1}^{(\beta)}e^{\theta^*}\Phi_{\lambda_2}^{(\beta)}e^{\theta^*}\cdots\Phi_{\lambda_r}^{(\beta)}e^{\theta^*}\phi_{0}^{(\beta)}e^{\theta^*}\rvac. & \text{$r:$ odd}
\end{cases}\\
&|\lambda\rangle_{gp}=
    (\Phi_{\lambda_1}^{[\beta]}-\frac{1}{2}(-\frac{\beta}{2})^{\lambda_1})e^{-\theta}(\Phi_{\lambda_2}^{[\beta]}-\frac{1}{2}(-\frac{\beta}{2})^{\lambda_2})e^{-\theta}\cdots(\Phi_{\lambda_r}^{[\beta]}-\frac{1}{2}(-\frac{\beta}{2})^{\lambda_r})e^{-\theta}(\phi_0+1)\rvac,\\
&|\lambda\rangle_{GQ}=\begin{cases}
    \phi_{\lambda_1}^{(\beta)}e^{\theta^*}\phi_{\lambda_2}^{(\beta)}e^{\theta^*}\cdots\phi_{\lambda_r}^{(\beta)}e^{\theta^*}\rvac & \text{$r:$ even}\\
\phi_{\lambda_1}^{(\beta)}e^{\theta^*}\phi_{\lambda_2}^{(\beta)}e^{\theta^*}\cdots\phi_{\lambda_r}^{(\beta)}e^{\theta^*}\phi_{0}^{(\beta)}e^{\theta^*}\rvac & \text{$r:$ odd}
\end{cases}
\end{align*}

\begin{thm}\label{thm:GP-gq}Let $\lambda\in \SP$, we have
\begin{eqnarray}
GP_\lambda(x)&=&
\lvac  e^{\mathcal{H}^{(\beta)}(x)}
|\lambda\rangle_{GP},\quad
GQ_\lambda(x)=
\lvac  e^{\mathcal{H}^{[\beta]}(x)}
|\lambda\rangle_{GQ},\\
gp_\lambda(y)&=&
\lvac  e^{\mathcal{H}^{[\beta]}(y)}
|\lambda\rangle_{gp},\quad\;\;\;\,
gq_\lambda(y)=
\lvac  e^{\mathcal{H}^{[\beta]}(y)}
|\lambda\rangle_{gq}.
\end{eqnarray}
\end{thm}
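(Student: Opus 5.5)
This theorem is the non-equivariant case, and the section opens by saying it collects these results from Iwao \cite{Iwao2023-2}. So the plan is to reduce each of the four identities to the corresponding statement there, and to check that the conventions match. Where the match is not literal, I would supply a short direct argument: expand $e^{\mathcal{H}}$ by the Boson--Fermion correspondence, Lemma \ref{lem:x1x2}, and compare with the known Pfaffian or Nimmo-type expressions for $GP_\lambda(x)$ and $GQ_\lambda(x)$.

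\textbf{The $GP$ and $GQ$ identities.} The first step is to commute the factors $e^{\theta^*}$ to the left through the $\phi^{(\beta)}$ and $\Phi^{(\beta)}$ using \eqref{eq:phi-theta-star}. Each passage multiplies the field by $(1+\beta z^{-1})$, and $\lvac e^{\theta^*}=\lvac$ absorbs the leftover exponential. Next I would pass the fields through $e^{\mathcal{H}^{(\beta)}(x)}$ using $[b_m,\phi_n]=\phi_{n-m}$. Conjugating $\phi^{(\beta)}(z)$ by $e^{\mathcal{H}^{(\beta)}(x)}$ produces the multiplicative factor $\prod_i \frac{1+\beta x_i\,\text{-terms}}{\cdots}$, i.e. the deformed vertex factor $\prod_i\frac{1\oplus\ldots}{\ldots}$ that is visible in $p_n^{(\beta)}$.

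The resulting vacuum expectation value is a product of one-point factors times a two-point Wick Pfaffian. By \eqref{Phivalue} the two-point factors are $\frac{w^{-1}-u^{-1}}{w^{-1}\oplus u^{-1}}$. Taking the coefficient of $z_1^{\lambda_1}\cdots z_r^{\lambda_r}$ then gives exactly the generating-function form of $GQ_\lambda$ (respectively $GP_\lambda$, where the prefactor $\frac{1}{2+\beta z^{-1}}$ converts $x\oplus x$ into $x$). This matches the known generating-function and Pfaffian formulas for $GP$ and $GQ$. For odd $r$, the extra $\phi_0^{(\beta)}$ accounts for the parity padding.

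\textbf{The $gq$ and $gp$ identities.} The dual functions follow by duality rather than by computation. By Lemma \ref{lem:x1x2}, the pairing on $\hat{\mathcal{F}}_{\mathrm{even}}\times\mathcal{F}_{\mathrm{even}}$ corresponds to \eqref{eq:pp}. The Cauchy identities \eqref{eq:GQ-gp} and \eqref{eq:GP-gq} say that $gp_\lambda$ (respectively $gq_\lambda$) is the dual basis of $GQ_\lambda$ (respectively $GP_\lambda$). It therefore suffices to show the biorthogonality relations
\[
{}_{GP}\langle\mu|\lambda\rangle_{gq}=\delta_{\lambda\mu},
\qquad
{}_{GQ}\langle\mu|\lambda\rangle_{gp}=\delta_{\lambda\mu},
\]
where ${}_{GP}\langle\mu|$ denotes the $*$-dual of $|\mu\rangle_{GP}$. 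To prove these, I would move all $e^{\pm\theta}$ and $e^{\theta^*}$ factors together using Lemma \ref{Lem:beta-theta}. The remaining fermions are then anticommuted using \eqref{Phi-phi}, $[\Phi_n^{(\beta)*},\phi_m^{[\beta]}]_+=\delta_{mn}$, together with the annihilation rules \eqref{Phibet-vacu}. This reduces the pairing to a triangular determinant with unit diagonal.

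\textbf{Main obstacle.} The hardest part is the $gp$ case. The shifted operators $\Phi^{[\beta]}_{\lambda_i}-\frac12(-\beta/2)^{\lambda_i}$ and the terminal factor $(\phi_0+1)\rvac$ do not fit the clean anticommutation pattern. One must check that the constant shifts exactly cancel the contributions of $\phi_0$ and of the $\beta$-tails of $\Phi^{[\beta]}$ against $|\mu\rangle_{GQ}$, separately for even and odd length. I would first test small cases, $\lambda=(1)$ and $(2,1)$, to pin down the normalization. Failing a clean argument, I would cite Iwao's theorem directly for this case.
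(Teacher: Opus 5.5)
Your primary plan is the paper's own treatment. Theorem~\ref{thm:GP-gq} is not proved in the paper; it is recalled from \cite{Iwao2023-2}. For $gp$ the fallback to Iwao is actually forced: the paper gives no direct anticommutation proof of ${}_{GQ}\langle\mu|\lambda\rangle_{gp}=\delta_{\lambda\mu}$, and its equivariant version is left open (Conjecture~\ref{conj:gp} and the conjecture after it).

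\textbf{The $GP$ and $GQ$ computation.} Your direct verifications for $GP$, $GQ$ and $gq$ are the $b=c=0$ cases of what the paper does later: Theorem~\ref{thm:GP as VEV}, and Theorem~\ref{thm: gq VEV} via Proposition~\ref{thm:b-c} and Lemma~\ref{lem:dual GP vs gq}. They work, but your bookkeeping for $GP/GQ$ has two errors that would spoil the match.
\begin{enumerate}
\item By \eqref{eq:phi-theta-star}, $\Phi^{(\beta)}(z)e^{\theta^*}=e^{\theta^*}(1+\beta z^{-1})^{-1}\Phi^{(\beta)}(z)$. So each passage gives $(1+\beta u)^{-1}$ at $z=u^{-1}$, not $(1+\beta z^{-1})$.
\item The $e^{\theta^*}$'s are not simply absorbed by $\lvac$, because $e^{\mathcal{H}^{(\beta)}(x)}$ sits in between. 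The commutator $[\mathcal{H}^{(\beta)}(x),\theta^*]=\sum_k\log(1+\beta x_k)$ is a nonzero scalar, so each copy contributes $\prod_k(1+\beta x_k)$.
\end{enumerate}
Only with both corrections do the vertex factor $\prod_k\frac{u\oplus x_k}{u-x_k}$ and the two-point function $\frac{u_j-u_i}{u_j\oplus u_i}$ combine into $GQ(u_i^{-1})=\frac{1}{1+\beta u_i}\prod_k\frac{u_i\oplus x_k}{u_i\ominus x_k}$ and $\prod_{i<j}\frac{u_j\ominus u_i}{u_j\oplus u_i}$.

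Your comparison target must also be independent of the statement. In this paper, Corollary~\ref{cor:gq-GP}, and hence the proof of Theorem~\ref{thm:GP as VEV}, come after Theorem~\ref{thm:GP-gq} and depend on it. Compare instead with, for example, the $b=0$ Pfaffian formula of Hudson--Ikeda--Matsumura--Naruse, combined with Proposition~\ref{prop:Knuth}.

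\textbf{The $gq$ computation.} At $b=c=0$ one gets $D_{ij}=\beta^{\lambda_j-\mu_i}\binom{i-j}{\lambda_j-\mu_i}$, which is zero if $\lambda_j<\mu_i$. After the equal-length and $\lambda\subset\mu$ reductions, this matrix is lower triangular with diagonal entries $\delta_{\lambda_i\mu_i}$. So your ``unit diagonal'' holds only for $\lambda=\mu$; otherwise a diagonal entry vanishes and the determinant is $0$, which is what you want.

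Finally, the printed $GQ$ identity should read $e^{\mathcal{H}^{(\beta)}(x)}$, not $e^{\mathcal{H}^{[\beta]}(x)}$, as you tacitly assumed.
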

The functions for
the single row partitions can be computed by  the following generating functions.
\begin{prop}[\cite{Iwao2023-2}]
  Let  $GP(z)=\sum\limits_{m\in \Z}GP_m(x)z^{m},\;
GQ(z)=\sum\limits_{m\in \Z}GQ_m(x)z^{m},$ and $
gp(z)=\sum\limits_{m=0}^\infty gp_m(y)z^m,\;
gq(z)=\sum\limits_{m=0}^\infty gq_m(y)z^m
$. Then we have
\begin{align}
GQ(u^{-1})&=\frac{1}{(1+\beta u)}\prod_{j=1}^\infty\frac{u\oplus x_j}{u\ominus x_j},\quad GP(u^{-1})=\frac{1}{2+\beta u}GQ(u^{-1}),\\
gq(z)&=\prod_{j=1}^\infty\frac{1-y_j\bar{z}}{1-y_jz},\quad gp(z)=\frac{1}{2+\beta z}{(gq(z)+\beta z+1)}.
\label{eq:gp gq single }
\end{align}
\end{prop}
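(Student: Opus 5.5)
The four generating functions will be computed directly from the vacuum expectation values in Theorem~\ref{thm:GP-gq}, specialised to one-row partitions $\lambda=(m)$. Since $r=1$ is odd, each vector is a single deformed fermion followed by a zero-mode factor. The common strategy is to sum over $m$ so that the single fermion assembles into a field, namely $\phi^{[\beta]}(z)$, $\phi^{(\beta)}(u^{-1})$, $\Phi^{(\beta)}(u^{-1})$, or $\Phi^{[\beta]}(z)$. The expectation value then becomes a two-point function, or a one-point function, dressed by $e^{\mathcal{H}}$.

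For $gq$, I would write
\[
gq(z)=\sum_{m}\lvac e^{\mathcal{H}^{[\beta]}(y)}\phi^{[\beta]}_m e^{-\theta}\phi^{[\beta]}_0 e^{-\theta}\rvac z^m .
\]
Using $e^{\theta}\rvac=\rvac$ and Lemma~\ref{Lem:beta-theta}, I move $e^{-\theta}$ past $\phi_0^{[\beta]}$ and then to the left, where it is absorbed. The zero mode $\phi_0^{[\beta]}$ acts on the vacuum as $\phi_0$ plus a correction; since $\phi_0^2=1$, one can pair it with $\phi_0$ coming from $\lvac$.

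Next I commute $e^{\mathcal{H}^{[\beta]}(y)}$ to the right using $[b_n,\phi_k]=\phi_{k-n}$. This gives
\[
e^{\mathcal{H}}\phi(z)e^{-\mathcal{H}}=\exp\Bigl(2\sum_{n\ \mathrm{odd}} p_n^{[\beta]}(y)z^{-n}/n\Bigr)\phi(z)
\]
in the undeformed variable. After the substitution relating $\phi^{[\beta]}$ to $\phi$, namely $z\mapsto z/(1+\tfrac{\beta}{2}z)$ and $z^{-1}\mapsto z^{-1}+\tfrac\beta2$, the exponent becomes $\sum_n 2p_n(y+\beta/2)w^n/n$ with $w$ the transformed variable. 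The only sign to track is that $w^n$ versus $w^{-n}$ must match the choice of field. Using
\[
\exp\Bigl(2\sum_{n\ \mathrm{odd}}p_n w^n/n\Bigr)=\prod_j\frac{1+y_jw}{1-y_jw},
\]
and the fact that $e^{\mathcal{H}}$ fixes $\lvac$, the remaining vacuum value $\lvac\phi^{[\beta]}(z)\phi_0\rvac$ is elementary. After simplification, the factor $(1+y_j(\tfrac{\cdots}{\cdots}))/(1-\cdots)$ should become $(1-y_j\bar z)/(1-y_jz)$, with $\bar z=\ominus z$, including the shift $p_n(\beta/2)$.

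For $GQ$ and $GP$ the argument is the same, with $\phi^{(\beta)}$, $\theta^*$ from \eqref{eq:phi-theta-star}, and $\mathcal{H}^{(\beta)}(x)$. The factor $(1+\beta u)^{-1}$ comes from commuting $e^{\theta^*}$ through $\phi^{(\beta)}$, and the product $\prod_j (u\oplus x_j)/(u\ominus x_j)$ comes from the exponentiated $\beta$-deformed power sums $p_n(x/(1+\beta/2))$. The relation $GP(u^{-1})=GQ(u^{-1})/(2+\beta u)$ is then immediate, because $\Phi^{(\beta)}(z)=(2+\beta z^{-1})^{-1}\phi^{(\beta)}(z)$ as fields and everything else in the expectation value is identical.

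For $gp$, the field $\Phi^{[\beta]}(z)=(2+\beta z)^{-1}\phi^{[\beta]}(z)$ produces the prefactor $1/(2+\beta z)$ applied to the $gq$-type expression. The constant shifts require separate handling: the terms $-\tfrac12(-\beta/2)^{m}$ together with $(\phi_0+1)$ in place of $\phi_0^{[\beta]}e^{-\theta}$ contribute a one-point term $\lvac e^{\mathcal H}\Phi^{[\beta]}(z)\rvac$ plus scalar pieces. Summing $\sum_m\tfrac12(-\beta z/2)^m=1/(2+\beta z)$ and collecting terms should give $\bigl(gq(z)+\beta z+1\bigr)/(2+\beta z)$.

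I expect the main obstacle to be this bookkeeping for $gp$, in particular the contributions coming from $\phi_0$ and the truncation to $m\ge0$. I would handle it by computing the low-order coefficients first to fix the constants, and then verify the identity as formal series.
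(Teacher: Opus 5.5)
Since the paper only quotes this proposition from Iwao and gives no proof, the question is whether your derivation from Theorem~\ref{thm:GP-gq} works. It does not, as written, in the $GQ$/$GP$ part. That computation is not ``the same argument'' as for $gq$, and you attribute the product to the wrong source.

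The exponential $e^{\theta^*}$ is built from $b_{-n}$ ($n>0$). It therefore fixes $\lvac$ but not $\rvac$, and it does not commute with $e^{\mathcal{H}^{(\beta)}(x)}$, because $[b_n,b_{-n}]=n/2$. Set $X_j=x_j/(1+\frac{\beta}{2}x_j)$ and $v=u^{-1}+\frac{\beta}{2}$. The exponentiated deformed power sums then give only $\prod_j\frac{1+X_jv}{1-X_jv}=\prod_j\frac{u\oplus x_j}{u-x_j}$, not $\prod_j\frac{u\oplus x_j}{u\ominus x_j}$. The missing factor $\prod_j(1+\beta x_j)$, which turns $u-x_j$ into $u\ominus x_j$, is the central term $\exp[\mathcal{H}^{(\beta)}(x),\theta^*]=\exp\bigl(2\sum_{n\ \mathrm{odd}}p^{(\beta)}_n(x)(\beta/2)^n/n\bigr)$. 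Your sketch never produces it.

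A correct bookkeeping runs as follows.
\begin{enumerate}
\item Check that $\phi^{(\beta)}_0e^{\theta^*}\rvac=\phi_0\rvac$. This uses $e^{\theta^*}\rvac=\sum_{n\ge0}(\beta/2)^n\phi_n\phi_0\rvac$, and it is not the trivial analogue of $\phi^{[\beta]}_0e^{-\theta}\rvac=\phi_0\rvac$.
\item Move the remaining $e^{\theta^*}$ to the left past $\phi^{(\beta)}(u^{-1})$ using Lemma~\ref{Lem:beta-theta}; this gives $(1+\beta u)^{-1}$.
\item Move it further past $e^{\mathcal{H}^{(\beta)}(x)}$, which gives $\prod_j(1+\beta x_j)$. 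After that, $\lvac e^{\theta^*}=\lvac$ and $\lvac e^{\mathcal{H}^{(\beta)}(x)}\phi^{(\beta)}(u^{-1})\phi_0\rvac=\prod_j\frac{u\oplus x_j}{u-x_j}$.
\end{enumerate}
You must also remember that Theorem~\ref{thm:GP-gq} only covers $m\ge1$. The nonpositive coefficients of the assembled series have to be compared with the conventions $GQ_0=1$ and $GQ_{-m}=(-\beta)^m$. They agree, because $\prod_j(1+\beta x_j)\frac{1+\beta x_j+x_jz}{1-x_jz}$ equals $1$ at $z=-\beta$. With this in place, the $GP$ relation follows as you say.

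The $gq$ and $gp$ parts are sound in outline, but contain slips you should fix.
\begin{itemize}
\item The factors $e^{-\theta}$ are absorbed on the right, not the left: $e^{-\theta}\rvac=\rvac$ and $e^{-\theta}\phi_0\rvac=\phi_0\rvac$, so the one-row vector is just $\phi^{[\beta]}_m\phi_0\rvac$.
\item $e^{\mathcal{H}}$ fixes $\rvac$, not $\lvac$; indeed $\lvac b_n\ne0$ for $n>0$.
\item $[b_n,\phi(z)]=z^{n}\phi(z)$, so the dressing factor is $\exp\bigl(2\sum_{n\ \mathrm{odd}}p^{[\beta]}_n(y)w^n/n\bigr)$ with $w=z/(1+\frac{\beta}{2}z)$. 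This does equal $\prod_j(1-y_j\bar z)/(1-y_jz)$, and $\lvac\phi^{[\beta]}(z)\phi_0\rvac=1$.
\item For $gp$, the one-point term $\lvac e^{\mathcal{H}}\Phi^{[\beta]}(z)\rvac$ vanishes by parity. The constants come only from $m=0$: there the one-row expectation value is $\frac12 gq_0-\frac12=0$, while $gp_0=1$. Hence $gp(z)=1+\frac{gq(z)-1}{2+\beta z}$, which is the stated formula.
\end{itemize}
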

Note that for $m>0$ we have $GQ_{-m}(x)=(-\beta)^m.$

\begin{cor}\label{cor:gq-GP} Let $\lambda\in \SP$. We have
\begin{align*}&GQ_{\lambda}(x)=[u_1^{-\lambda_1}\dots u_r^{-\lambda_r}]\prod_{i=1}^rGQ(u_i^{-1})\prod_{1\leq i<j\leq r}\frac{u_j\ominus u_i}{u_j\oplus u_i},\\
&GP_{\lambda}(x)=[u_1^{-\lambda_1}\dots u_r^{-\lambda_r}]\prod_{i=1}^r GP(u_i^{-1})\prod_{1\leq i<j\leq r}\frac{u_j\ominus u_i}{u_i\oplus u_j},\\
&gq_\lambda(y)=[z_1^{\lambda_1}\dots z_r^{\lambda_r}]\prod_{i=1}^r gq(z_i)\prod_{1\leq i<j\leq r}\frac{z_i\ominus z_j}{z_i\oplus z_j},\\
&gp_\lambda(y)=[z_1^{\lambda_1}\dots z_r^{\lambda_r}]\prod_{i=1}^r \frac{1}{2+\beta z_i}\sum_{\substack{0\leq a\leq r\\i_1<i_2<\cdots<i_a}}(-1)^{r-a}\prod_{\kappa=1}^a\frac{gq(z_{i_\kappa})}{(1+\beta z_{i_\kappa})^{i_\kappa-\kappa}}\prod_{i_1\leq i_\kappa<i_\theta\leq i_a}\frac{z_{i_\kappa}\ominus z_{i_\theta}}{z_{i_\kappa}\oplus z_{i_\theta}}.
\end{align*}
Here, for $GQ_\lambda(x)$, $GP_\lambda(x)$, the right hand side expands in $\mathbb{Q}(\beta)[[x_1,x_2,\cdots]]((u_r))\cdots ((u_1))$, while for $gq_\lambda(y)$, $gp_\lambda(y)$,
 the right hand side expands in $\mathbb{Q}(\beta)[y_1,y_2,\cdots]((z_1))\cdots ((z_r))$,
 and $[u_1^{-\lambda_1}\dots u_r^{-\lambda_r}]F(u_1,\ldots,u_r)$ denotes the coefficient of
 $u_1^{-\lambda_1}\dots u_r^{-\lambda_r}$
 in a series $F(u_1,\ldots,u_r)$ and the symbol $[z_1^{\lambda_1}\dots z_r^{\lambda_r}]$ has a similar meaning.
\end{cor}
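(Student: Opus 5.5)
We would derive all four formulas from the vacuum expectation value expressions in Theorem~\ref{thm:GP-gq} by passing to generating fields. For \(gq_\lambda\), the vector \(|\lambda\rangle_{gq}\) is the coefficient of \(z_1^{\lambda_1}\cdots z_r^{\lambda_r}\) in
\[
\phi^{[\beta]}(z_1)e^{-\theta}\cdots\phi^{[\beta]}(z_r)e^{-\theta}\rvac .
\]
Here the factor \(\phi^{[\beta]}_0e^{-\theta}\) for odd \(r\) is handled by adjoining an extra field \(z_{r+1}\) and extracting \(z_{r+1}^0\). Using \(e^{-\theta}\rvac=\rvac\) and Lemma~\ref{Lem:beta-theta} in the form
\[
e^{-\theta}\phi^{[\beta]}(z)=(1+\beta z)^{-1}\phi^{[\beta]}(z)e^{-\theta},
\]
we move every \(e^{-\theta}\) to the right. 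This produces the scalar factor \(\prod_i(1+\beta z_i)^{-(i-1)}\) in front of \(\phi^{[\beta]}(z_1)\cdots\phi^{[\beta]}(z_r)\rvac\).

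Next we commute \(e^{\mathcal H^{[\beta]}(y)}\) through each field. From \([b_m,\phi_n]=\phi_{n-m}\), conjugation multiplies \(\phi^{[\beta]}(z)\) by an exponential factor in the \(p_n^{[\beta]}(y)\). We identify this factor with \(gq(z)=\prod_j(1-y_j\bar z)/(1-y_jz)\) using the single-row case of the preceding Proposition. That Proposition is itself obtained from Theorem~\ref{thm:GP-gq} with \(r=1\), where the pairing with \(\phi^{[\beta]}_0\) gives exactly the one-field computation. We then apply Wick's theorem. The two-point function of the \(\phi^{[\beta]}\)-fields is the \(*\)-image of \eqref{phivalue}, namely \((z_i-z_j)/(z_i\oplus z_j)\) up to the normalization coming from \(\phi_0\). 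The extra factors \((1+\beta z_i)^{-(i-1)}\) combine with these to turn \(z_i-z_j\) into \(z_i\ominus z_j=(z_i-z_j)/(1+\beta z_j)\).

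The Pfaffian of the matrix \((z_i\ominus z_j)/(z_i\oplus z_j)\) then factors into a Schur-type product, by the classical Schur Pfaffian identity after the substitution to the variables \(\ominus\)-linearized. This gives the stated product formula, and the odd case reduces to the even one after setting \(z_{r+1}^0\). The same steps, using \(\phi^{(\beta)}\), \(e^{\theta^*}\) and \eqref{eq:phi-theta-star}, give the \(GQ\) formula with variables \(u_i^{-1}\) and \(GQ(u^{-1})\). For the \(GP\) formula we replace \(\phi^{(\beta)}\) by \(\Phi^{(\beta)}=(2+\beta z^{-1})^{-1}\phi^{(\beta)}\). This only inserts the scalar \(1/(2+\beta u_i)\), turning \(GQ(u_i^{-1})\) into \(GP(u_i^{-1})\); the last factor \(\phi_0^{(\beta)}\) is unnormalized, which matches the stated formula. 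We must check that the expansions land in the indicated iterated Laurent rings, which is the ordering \(|z_1|>\cdots>|z_r|\) implicit in products of fields.

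The \(gp\) case is the main obstacle. The vector \(|\lambda\rangle_{gp}\) contains the shifted operators \(\Phi^{[\beta]}_{\lambda_i}-\tfrac12(-\beta/2)^{\lambda_i}\) and the tail \((\phi_0+1)\rvac\). We expand the product over subsets \(\{i_1<\dots<i_a\}\) of positions that carry a field rather than the scalar, which explains the signs \((-1)^{r-a}\). For each subset we move the \(e^{-\theta}\) factors past only the fields actually present, giving the exponent \(i_\kappa-\kappa\), the number of \(e^{-\theta}\)'s to the left of the \(\kappa\)-th field that remain. The scalar part, \(\tfrac12(-\beta/2)^{m}\) summed over \(m\), becomes a generating function that must be absorbed into the common prefactor \(\prod_i 1/(2+\beta z_i)\). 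Checking that this bookkeeping, together with the \(\phi_0+1\) parity fix, yields exactly the stated sum is where we expect the real work. We would verify it first for \(r=1\) against \(gp(z)=(gq(z)+\beta z+1)/(2+\beta z)\), and then argue inductively.
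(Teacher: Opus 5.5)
The paper records this corollary without proof; it is the non-equivariant input taken from Iwao's work together with Theorem~\ref{thm:GP-gq}. Your route (generating fields, conjugating away the exponentials via Lemma~\ref{Lem:beta-theta}, then Wick and Schur) is the intended one, and the paper uses the same mechanism for Theorem~\ref{thm:GP as VEV} and Proposition~\ref{thm:b-c}. Your $gq$ computation is essentially correct.

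\textbf{The $GQ$/$GP$ case is not ``the same steps''.} This is where your argument breaks. $\theta^*$ is built from the $b_{-n}$, so $e^{\theta^*}\rvac\neq\rvac$; what holds is $\lvac e^{\theta^*}=\lvac$. You must therefore move each $e^{\theta^*}$ to the \emph{left}.
\begin{itemize}
\item By \eqref{eq:phi-theta-star} this puts $(1+\beta u_i)^{-(r-i+1)}$ on the $i$-th field ($r$ even).
\item Each $e^{\theta^*}$ must then pass $e^{\mathcal H^{(\beta)}(x)}$, with which it does not commute. The commutator $[\mathcal H^{(\beta)}(x),\theta^*]=2\sum_{n\ \mathrm{odd}}p^{(\beta)}_n(x)(\beta/2)^n/n$ is a scalar whose exponential is $\prod_k(1+\beta x_k)$.
\item This factor is indispensable. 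Conjugation by $e^{\mathcal H^{(\beta)}(x)}$ alone only gives $\prod_k(u\oplus x_k)/(u-x_k)$. One factor $\prod_k(1+\beta x_k)$ and one factor $(1+\beta u_i)^{-1}$ per field turn this into $GQ(u_i^{-1})$.
\item The remaining $(1+\beta u_i)^{-(r-i)}$ convert the Schur product $\prod_{i<j}(u_j-u_i)/(u_i\oplus u_j)$ into the stated one with $u_j\ominus u_i$.
\end{itemize}
Copying the $gq$ procedure (moving to the right) would instead give $(1+\beta u_i)^{i-1}$ and leave $e^{r\theta^*}\rvac$ unaccounted for.

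For the same reason, the odd-$r$ reduction here is not a substitution: $\phi^{(\beta)}(u^{-1})\rvac$ is a series in $u^{-1}$, while the prefactors are series in $u$. You must check separately that $\phi^{(\beta)}_0e^{\theta^*}\rvac$ contributes $1$. For instance, $e^{-\theta^*}\phi^{(\beta)}_0e^{\theta^*}\rvac=\sum_{n\ge0}(-\beta/2)^n\phi_n\rvac$, and its pairings cancel the extra factors $1+\beta u_i$ and $\prod_k(1+\beta x_k)$.

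\textbf{The $gp$ case is left open, but it needs no induction.} Since $\sum_{m\ge0}\tfrac12(-\beta/2)^mz^m=(2+\beta z)^{-1}$, the generating operator of $\Phi^{[\beta]}_m-\tfrac12(-\beta/2)^m$ is $(2+\beta z)^{-1}(\phi^{[\beta]}(z)-1)$. Expand over subsets $\{i_1<\cdots<i_a\}$.
\begin{itemize}
\item The field at position $i_\kappa$ acquires $(1+\beta z_{i_\kappa})^{-(i_\kappa-1)}$. Every $e^{-\theta}$ to its left acts on it, and the scalar entries do not shield it, so your account of the exponent is off.
\item As in the $gq$ case, $\kappa-1$ of these factors produce the $\ominus$'s, leaving exactly the exponent $i_\kappa-\kappa$.
\item Both $e^{-\theta}$ and $e^{\mathcal H^{[\beta]}(y)}$ fix $(\phi_0+1)\rvac$, and $\lvac\phi(t)\phi_0\rvac=1$. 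So Wick and Schur give $\prod_{\kappa<\nu}(z_{i_\kappa}-z_{i_\nu})/(z_{i_\kappa}\oplus z_{i_\nu})$ for either parity of $a$, which is the stated formula.
\end{itemize}

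\textbf{A smaller point.} The Pfaffian must be taken of the skew-symmetric matrix $\bigl((z_i-z_j)/(z_i\oplus z_j)\bigr)$, not of the matrix with $\ominus$, which is not skew-symmetric. Since $\phi^{[\beta]}(z)=\phi(t)$ with $t=z/(1+\tfrac{\beta}{2}z)$, Schur's identity applies directly in $t$. The $\ominus$'s appear only afterwards, on multiplying by $\prod_j(1+\beta z_j)^{-(j-1)}$.
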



\subsection{ $GQ_{\lambda}(x\,|\,b)$ and $GP_{\lambda}(x\,|\,b)$ as a vacuum expectation value}\label{sec:FacGPGQ}

Introduce the following new $\beta$-deformed fermion fields containing equivariant parameters as
\begin{align*}
\Phi^{(\beta)(k)}(z)&=\prod_{j=1}^{k-1}\frac{1-zb_j}{1+\beta b_j}\Phi^{(\beta)}(z)
    =\sum_{i\in \mathbb{Z}}\Phi_i^{(\beta)(k)}z^i,\\
    \phi^{(\beta)(k)}(z)&=\prod_{j=1}^{k-1}\frac{1-zb_j}{1+\beta b_j}\phi^{(\beta)}(z)
    =\sum_{i\in \mathbb{Z}}\phi_i^{(\beta)(k)}z^i,
\end{align*}
where $k>1$. We define that $\Phi^{(\beta)(k)}(u) = \phi^{(\beta)(k)}(z)=\phi^{(\beta)}(z)$ for all $k \leq 0$. In the specific case of $k=1$, we also allow the notation $\Phi^{(\beta)(1)}(z) = \Phi^{(\beta)}(z)$ and $\phi^{(\beta)(1)}(z) = \phi^{(\beta)}(z)$.   It can be directly calculated that for $k\geq1$
\begin{align}
&\Phi_i^{(\beta)(k)}=\prod_{j=1}^{k-1}\frac{1}{1+\beta b_j}\sum_{j=0}^{k-1}(-1)^je_j(b_1\dots,b_k)\Phi^{(\beta)}_{i-j},\label{Phi-betak}\\
&\phi_i^{(\beta)(k)}=\prod_{j=1}^{k-1}\frac{1}{1+\beta b_j}\sum_{j=0}^{k-1}(-1)^je_j(b_1\dots,b_k)\phi^{(\beta)}_{i-j}.\label{phi-betak}
\end{align}

\begin{lem}[cf. \cite{HIMN}, Lemma 5.19]
The factorial $GP$-function $GP_{\lambda}(x\,|\,b)$ and factorial $GQ$-function $GQ_{\lambda}(x\,|\,b)$
are given by the following formula
\begin{align}
&&GP_{\lambda}(x\,|\,b)=[u_1^{-\lambda_1}\dots u_r^{-\lambda_r}]\prod_{i=1}^r\prod_{j=1}^{\lambda_i-1} GP(u^{-1}_i)
\frac{u_i-b_j}{u_i(1+\beta b_j)}\prod_{1\leq i<j\leq r}\frac{u_j\ominus u_i}{u_i\oplus u_j},\label{GPxbexpand}\\
&&GQ_{\lambda}(x\,|\,b)=[u_1^{-\lambda_1}\dots u_r^{-\lambda_r}]\prod_{i=1}^r\prod_{j=1}^{\lambda_i-1} GQ(u^{-1}_i)
\frac{u_i-b_j}{u_i(1+\beta b_j)}\prod_{1\leq i<j\leq r}\frac{u_j\ominus u_i}{u_i\oplus u_j}.
\end{align}
\end{lem}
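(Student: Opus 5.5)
The plan is to reduce the equivariant statement to the non-equivariant one (Corollary~\ref{cor:gq-GP}) by linearity of coefficient extraction. I treat $GP$; the argument for $GQ$ is identical, with $[\![x|b]\!]$ in place of $x[x|b]$. The starting observation is that
\[
\frac{u-b_j}{u(1+\beta b_j)}=\frac{1-b_ju^{-1}}{1+\beta b_j},
\]
which is a polynomial of degree one in $u^{-1}$. So for each $i$ the factor $\prod_{j=1}^{\lambda_i-1}\frac{u_i-b_j}{u_i(1+\beta b_j)}$ expands as $\sum_{k=0}^{\lambda_i-1}c^{(i)}_k u_i^{-k}$, where
\[
c^{(i)}_k=(-1)^k e_k(b_1,\ldots,b_{\lambda_i-1})\prod_{j=1}^{\lambda_i-1}(1+\beta b_j)^{-1}.
\]
Extracting $[u^{-\lambda}]$ from $u^{-k}\cdot F$ gives $[u^{-(\lambda-k)}]F$. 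Hence the right-hand side of \eqref{GPxbexpand} equals
\[
\sum_{k}\Bigl(\prod_i c^{(i)}_{k_i}\Bigr)\,\mathcal{G}_{\lambda-k}(x),
\qquad
\mathcal{G}_a(x):=[u_1^{-a_1}\cdots u_r^{-a_r}]\prod_{i=1}^r GP(u_i^{-1})\prod_{i<j}\frac{u_j\ominus u_i}{u_i\oplus u_j},
\]
where $a=(a_1,\ldots,a_r)$ ranges over integer sequences with all $a_i=\lambda_i-k_i\ge1$, not necessarily strictly decreasing.

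The key step is the following identity, valid for every sequence of positive integers $a$ and every $n\ge r$:
\begin{equation*}
\mathcal{G}_a(x_1,\ldots,x_n)=\frac{1}{(n-r)!}\sum_{w\in S_n}w\Bigl(\prod_{i=1}^r x_i^{a_i}\prod_{i=1}^r\prod_{j=i+1}^n\frac{x_i\oplus x_j}{x_i\ominus x_j}\Bigr).
\end{equation*}
Granting this, linearity finishes the proof:
\[
\sum_{k_i}c^{(i)}_{k_i}x_i^{\lambda_i-k_i}=x_i\prod_{j=1}^{\lambda_i-1}\frac{x_i-b_j}{1+\beta b_j}=x_i[x_i|b]^{\lambda_i-1},
\]
which is exactly the summand in \eqref{eq:GP}. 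The finite-$n$ identity is compatible with setting $x_{n+1}=0$ on both sides, so stability lets us pass to infinitely many variables.

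I expect the displayed identity to be the main obstacle. For strict $a$ it is just Corollary~\ref{cor:gq-GP} combined with \eqref{eq:GP} at $b=0$, where $x[x|0]^{a-1}=x^a$. I would prove it for arbitrary positive $a$ directly in finitely many variables rather than through the fermionic picture. Specialize $x=(x_1,\ldots,x_n)$, so that $GP(u^{-1})=\frac{1}{2+\beta u}\cdot\frac{1}{1+\beta u}\prod_{j=1}^n\frac{u\oplus x_j}{u\ominus x_j}$ is a rational function. Then compute the iterated coefficient $[u_1^{-a_1}]\cdots[u_r^{-a_r}]$ as a sum of residues at the poles $u_i=x_j$, proceeding successively in $u_r,\ldots,u_1$ according to the expansion order $((u_r))\cdots((u_1))$.

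When taking the residue at $u_r=x_{j_r}$, the factor $\frac{u_j\ominus u_i}{u_i\oplus u_j}$ cancels the pole that would otherwise arise when two $u$'s land on the same $x_j$. So only injective assignments $i\mapsto j_i$ survive. The residue of $\frac{1}{2+\beta u}\cdot\frac{u\oplus x}{u\ominus x}$ at $u=x$ supplies the factor that turns $u^{a}$ into $x^{a}$ with the right normalization, and the remaining factors produce $\prod_{i}\prod_{j\ne j_1,\ldots,j_i}\frac{x_{j_i}\oplus x_j}{x_{j_i}\ominus x_j}$. Summing over injections gives the symmetrization divided by $(n-r)!$. The delicate points are the contributions at $u=\infty$ and at $u=-2/\beta$, $u=-1/\beta$. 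These should vanish precisely because every $a_i\ge1$, since then the integrand has enough decay at infinity. This is the part I would check most carefully. If it becomes unwieldy, the fallback is to quote the same residue computation from \cite[Lemma~5.19]{HIMN}, which is stated for arbitrary integer exponents.
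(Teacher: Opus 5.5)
\textbf{Overall.} Your reduction is sound, and it takes a genuinely different route from the paper's.

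\textbf{The paper's route.} The paper's proof does not work with the symmetrizer \eqref{eq:GP} directly. It takes from \cite{HIMN2} the one-variable generating functions of the shifted functions $GP_m^{(\ell)}(x\,|\,b)$. It then uses Knuth's identity (Proposition~\ref{prop:Knuth}) to split the $r$-fold coefficient extraction into a Pfaffian of two-variable extractions. Finally it recognizes the outcome as the Pfaffian formula for $GP_\lambda(x\,|\,b)$ established in \cite{HIMN2}.

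\textbf{Your route.} You strip off the $b$-dependence by linearity, and your computations $\sum_k c^{(i)}_k x^{\lambda_i-k}=x[x|b]^{\lambda_i-1}$ and $(x\oplus x)[x|b]^{\lambda_i-1}=[\![x|b]\!]^{\lambda_i}$ are right. You then verify the resulting non-equivariant identity directly against the definition. This is self-contained, avoids Pfaffians and the external theorem, and reproves the $GP$/$GQ$ parts of Corollary~\ref{cor:gq-GP} without fermions. It also correctly isolates the one new input: linearity produces non-strict exponent sequences $a$, about which Corollary~\ref{cor:gq-GP} says nothing. The paper's route is shorter given the literature and ties the lemma directly to the Pfaffian formula.

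\textbf{The residue step.} This step carries all the weight, but it is mis-described, and as sketched it is incomplete.

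\emph{Order of integration.} The expansion in $((u_r))\cdots((u_1))$ makes $u_1$ the \emph{smallest} variable. The contours are $|x_k|<|u_1|<|u_2|<\cdots<|u_r|<|\beta|^{-1}$, suitably separated. If you integrate $u_r$ first, the denominators $u_i\oplus u_r$ produce poles at $u_r=\ominus u_i$ ($i<r$) \emph{inside} the $u_r$-contour, and your sketch ignores them.

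\emph{Role of $a_i\ge1$.} The condition $a_i\ge 1$ does not give decay at infinity: for $GP$, the $u_1$-integrand is of order $u_1^{a_1-r-2}$ there. The points $\infty,-1/\beta,-2/\beta$ need not have vanishing residues. They are simply outside the contours and never enter. What $a_i\ge1$ actually does is remove the pole at $u_i=0$.

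\emph{The fix.} Integrate $u_1$ first. Then the only poles inside its circle are $u_1=x_j$. At $u_1=x_j$, the cross factor $\frac{u_l\ominus x_j}{x_j\oplus u_l}$ times the factor $\frac{u_l\oplus x_j}{u_l\ominus x_j}$ of $GP(u_l^{-1})$ is exactly $1$. So what remains is the same integral with $x_j$ deleted:
\[
\mathcal{G}_{(a_1,\ldots,a_r)}(x_1,\ldots,x_n)=\sum_{j=1}^n x_j^{a_1}\prod_{k\ne j}\frac{x_j\oplus x_k}{x_j\ominus x_k}\;\mathcal{G}_{(a_2,\ldots,a_r)}(x_1,\ldots,\widehat{x_j},\ldots,x_n).
\]
The symmetrizer obviously satisfies the same recursion (group its terms by $w(1)=j$). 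For $GQ$, replace $x_j^{a_1}$ by $(x_j\oplus x_j)x_j^{a_1-1}$. With this correction your argument is complete without the fallback citation.
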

\begin{proof}
The formula \cite[Definition 3.5, Definition 3.7]{HIMN2} reads that for $m\ge 0$,
\begin{align}
GP_m(x)&= [u^{-m}]GP(u^{-1}),\\
GP_m^{(\ell)}(x\,|\,b)&=[u^{-m}]GP(u^{-1})
\prod_{i=1}^\ell
\frac{u-b_i}{u(1+\beta b_i)}\quad \text{for $\ell\ge 0$},
\label{GPm-l generating}
\end{align}
where $GP_m^{(\ell)}(x\,|\,b)$ is the equivariantly shifted
$GP$-function which is defined by
\begin{equation}
  GP_m^{(\ell)}(x\,|\,b)=\prod_{i=1}^\ell(1+\beta b_i)^{-1}
  \sum_{i=0}^{\ell}(-1)^ie_i(b_1,\ldots,b_\ell)GP_{m+i}(x).
\end{equation}
Acoording to a general result on Pfaffian (Proposition \ref{prop:Knuth}) below,
one sees that the equation
\eqref{GPxbexpand} is equivalent to
the Pfaffian formula \cite[Definition 3.13]{HIMN2} for $GP_\lambda(x\,|\,b)$.
\end{proof}

For any arbitrary strict partition $\lambda=(\lambda_1\dots,\lambda_r)$, we denote $r'$ by the minimum even integer that is equal to or greater than  $r$, if $r'\neq r$, let $\lambda_{r'}=0$,  define

\begin{align*}
|\lambda\rangle_{(GP,b)}&:=
\Phi_{\lambda_1}^{(\beta)(\lambda_1)}e^{\theta^*}\Phi_{\lambda_2}^{(\beta)(\lambda_2)}e^{\theta^*}\cdots\Phi_{\lambda_{r'}}^{(\beta)(\lambda_{r'})}e^{\theta^*}\rvac\\
&=\begin{cases}
    \Phi_{\lambda_1}^{(\beta)(\lambda_1)}e^{\theta^*}\Phi_{\lambda_2}^{(\beta)(\lambda_2)}e^{\theta^*}\cdots\Phi_{\lambda_r}^{(\beta)(\lambda_r)}e^{\theta^*}\rvac, & \text{$r:$ even}\\
\Phi_{\lambda_1}^{(\beta)(\lambda_1)}e^{\theta^*}\Phi_{\lambda_2}^{(\beta)(\lambda_2)}e^{\theta^*}\cdots\Phi_{\lambda_r}^{(\beta)(\lambda_r)}e^{\theta^*}\phi^{(\beta)}_{0}e^{\theta^*}\rvac, & \text{$r:$ odd}\end{cases}
\end{align*}
\begin{align*}
|\lambda\rangle_{(GQ,b)}&:=
\phi_{\lambda_1}^{(\beta)(\lambda_1)}e^{\theta^*}\phi_{\lambda_2}^{(\beta)(\lambda_2)}e^{\theta^*}\cdots\phi_{\lambda_{r'}}^{(\beta)(\lambda_{r'})}e^{\theta^*}\rvac\\
&=\begin{cases}
    \phi_{\lambda_1}^{(\beta)(\lambda_1)}e^{\theta^*}\phi_{\lambda_2}^{(\beta)(\lambda_2)}e^{\theta^*}\cdots\phi_{\lambda_r}^{(\beta)(\lambda_r)}e^{\theta^*}\rvac, & \text{$r:$ even}\\
\phi_{\lambda_1}^{(\beta)(\lambda_1)}e^{\theta^*}\phi_{\lambda_2}^{(\beta)(\lambda_2)}e^{\theta^*}\cdots\phi_{\lambda_r}^{(\beta)(\lambda_r)}e^{\theta^*}\phi^{(\beta)}_{0}e^{\theta^*}\rvac. & \text{$r:$ odd}\end{cases}
\end{align*}

We extend the natural pairing on $\mathcal{F}^\dagger \times \mathcal{F}$
by regarding the $b_i$ and $x_i,y_i$ as scalars.

\begin{thm}\label{thm:GP as VEV}
For any arbitrary strict partition $\lambda=(\lambda_1\dots,\lambda_r)$, the factorial $GP$-function $GP_{\lambda}(x\,|\,b)$ and the factorial $GQ$-function $GQ_{\lambda}(x\,|\,b)$ can be  expressed as
\begin{align}\label{GPxb}
GP_\lambda(x\,|\,b)=
\lvac e^{\mathcal{H}^{(\beta)}(x)}
|\lambda\rangle_{(GP,b)},\quad
GQ_\lambda(x\,|\,b)=
\lvac e^{\mathcal{H}^{(\beta)}(x)}
|\lambda\rangle_{(GQ,b)}.
\end{align}
\end{thm}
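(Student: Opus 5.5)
The plan is to reduce Theorem~\ref{thm:GP as VEV} to the non-equivariant case (Theorem~\ref{thm:GP-gq} and Corollary~\ref{cor:gq-GP}) by linearity in the fermion modes, and then compare with the generating-function expressions in the preceding lemma (cf.~\cite{HIMN}, Lemma 5.19).

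\emph{Step 1: expand the equivariant modes.} By \eqref{Phi-betak} and \eqref{phi-betak}, each $\Phi^{(\beta)(k)}_{i}$ (resp.\ $\phi^{(\beta)(k)}_i$) is a finite $\mathscr{R}$-linear combination of the modes $\Phi^{(\beta)}_{i-j}$ (resp.\ $\phi^{(\beta)}_{i-j}$). Equivalently, on the level of fields, $\Phi^{(\beta)(k)}(z)=\prod_{j=1}^{k-1}\frac{1-zb_j}{1+\beta b_j}\Phi^{(\beta)}(z)$. Since the pairing is extended with the $b_i$ regarded as scalars, $\lvac e^{\mathcal{H}^{(\beta)}(x)}|\lambda\rangle_{(GP,b)}$ is the coefficient of $z_1^{\lambda_1}\cdots z_{r'}^{\lambda_{r'}}$ in
\[
\prod_{i=1}^{r'}\prod_{j=1}^{\lambda_i-1}\frac{1-z_ib_j}{1+\beta b_j}\;
\lvac e^{\mathcal{H}^{(\beta)}(x)}\Phi^{(\beta)}(z_1)e^{\theta^*}\cdots\Phi^{(\beta)}(z_{r'})e^{\theta^*}\rvac .
\]
For odd $r$ the last factor is $\phi^{(\beta)}_0$, and it carries no equivariant factor because $\lambda_{r'}=0$.

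\emph{Step 2: compute the field correlator.} I would express the correlator $\lvac e^{\mathcal{H}^{(\beta)}(x)}\Phi^{(\beta)}(z_1)e^{\theta^*}\cdots\rvac$ in closed form. The mechanism is the one that produces Corollary~\ref{cor:gq-GP} in the non-equivariant case. First, $e^{\mathcal{H}^{(\beta)}(x)}$ acts on each field by a scalar factor, namely the generating function $GP(z_i)$, through the commutation $[b_m,\phi_n]=\phi_{n-m}$. Second, the insertions $e^{\theta^*}$ are moved to the left using Lemma~\ref{Lem:beta-theta} together with $\lvac e^{\theta^*}=\lvac$; each passage produces factors $(1+\beta z^{-1})$. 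Third, Wick's theorem applied to \eqref{Phivalue} gives the Pfaffian/product $\prod_{i<j}\frac{u_j\ominus u_i}{u_i\oplus u_j}$ with $u_i=z_i^{-1}$. The outcome is exactly the integrand of Corollary~\ref{cor:gq-GP} for $GP_\lambda(x)$. Indeed, the non-equivariant statement of Theorem~\ref{thm:GP-gq} says that extracting coefficients of this correlator yields $GP_\lambda(x)$ for every $\lambda$, and since the extraction works for all index tuples, the correlator itself is this series (in the appropriate expansion region).

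\emph{Step 3: compare with the lemma.} Substituting $z_i=u_i^{-1}$ gives $\frac{1-z_ib_j}{1+\beta b_j}=\frac{u_i-b_j}{u_i(1+\beta b_j)}$. Taking the coefficient of $u_1^{-\lambda_1}\cdots u_r^{-\lambda_r}$ then matches \eqref{GPxbexpand} term by term. The $GQ$ case is identical, with $\phi^{(\beta)}$ in place of $\Phi^{(\beta)}$ and $GQ(u^{-1})$ in place of $GP(u^{-1})$.

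\emph{Main obstacle.} The hard part is Step 2. One must justify promoting the coefficientwise identity of Theorem~\ref{thm:GP-gq} to an identity of multi-variable generating series valid for arbitrary integer indices, including non-strict and negative ones. The modes $\Phi^{(\beta)}_{i-j}$ with $i-j$ possibly $\le 0$ or repeated do appear, so this extension is genuinely needed, and the expansion domains of the rational factors must be controlled. I would handle this by proving the correlator formula directly from Wick's theorem and Lemma~\ref{Lem:beta-theta}, rather than citing only strict-partition values. I would also check separately that the odd-length case with the extra $\phi_0^{(\beta)}$ produces no extra factors.
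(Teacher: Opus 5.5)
Your proposal is correct and follows the paper's proof. The paper likewise pulls the scalar factors $\prod_{j=1}^{\lambda_i-1}\frac{u_i-b_j}{u_i(1+\beta b_j)}$ out of the fields $\Phi^{(\beta)(\lambda_i)}(u_i^{-1})$, replaces the remaining non-equivariant correlator by the product $\prod_i GP(u_i^{-1})\prod_{i<j}(\cdots)$ from Corollary~\ref{cor:gq-GP}, and matches the coefficient of $u_1^{-\lambda_1}\cdots u_r^{-\lambda_r}$ with \eqref{GPxbexpand}, handling the $GQ$ case in the same way; the paper uses that correlator identity as an identity of generating series without comment, so your point that it is needed for arbitrary (non-strict) index tuples, and is best derived directly from Wick's theorem and Lemma~\ref{Lem:beta-theta}, fills in a step the paper leaves implicit rather than changing the route.
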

\begin{proof}
It follows from the definition of $\Phi^{(\beta)(k)}(u)$ and Corollary \ref{cor:gq-GP} that the right-hand side of \eqref{GPxb} is equal to
\begin{align*}
&[u_1^{-\lambda_1}\cdots u_{r'}^{-\lambda_{r'}}]\langle
\mathrm{vac}|e^{H^{(\beta)}(x)}\Phi^{(\beta)(\lambda_1)}(u^{-1}_1) e^{\theta^*}\cdots\Phi^{(\beta)(\lambda_{r'})}(u^{-1}_{r'})e^{\theta^*}
|\mathrm{vac}\rangle,\\
&=[u_1^{-\lambda_1}\cdots u_{r'}^{-\lambda_{r'}}]\prod_{i=1}^{r}\prod_{j=1}^{\lambda_i-1}\frac{u_i-b_j}{u_i(1+\beta b_j)}\langle
\mathrm{vac}|e^{H^{(\beta)}(x)}\Phi^{(\beta)}(u^{-1}_1) e^{\theta^*}\cdots\Phi^{(\beta)}(u^{-1}_{r'})e^{\theta^*}
|\mathrm{vac}\rangle\\
&=[u_1^{-\lambda_1}\cdots u_{r}^{-\lambda_{r}}]\prod_{i=1}^{r}\prod_{j=1}^{\lambda_i-1}\frac{u_i-b_j}{u_i(1+\beta b_j)}GP(u_i^{-1})\prod_{1\leq i<j\leq r}\frac{u_j- u_i}{u_i\oplus u_j},
  \end{align*}
  which is  consistent with \eqref{GPxbexpand}.
  The proof for $GQ_\lambda(x\,|\,b)$ works similarly.
 \end{proof}

\subsection{  $gq_{\lambda}(x\,|\,b)$ as a vacuum expectation value}\label{se:fac-gq}

Introduce the following new $\beta$-deformed fermion field containing equivariant parameters as
\begin{align*}
    \phi^{[\beta](k)}(z)&=\prod_{j=1}^k\frac{z}{z-b_j}\prod_{j=1}^{k-1}(1+\beta b_j)\phi^{[\beta]}(z)=\sum_{i\in \mathbb{Z}}{\phi}_i^{[\beta](k)}z^i,
\end{align*}
where $k>0$,  we make the convention that $\phi^{[\beta](k)}(z)=\phi^{[\beta]}(z)$ for  $k\leq0$. It can be directly calculated that
\begin{align}
&{\phi}_i^{[\beta](k)}=\prod_{\ell=1}^{k-1}(1+\beta b_\ell)\sum_{j=0}^\infty h_j(b_1\dots,b_k)\phi_{i+j}^{[\beta]}.
\end{align}
For any arbitrary strict partition $\lambda=(\lambda_1\dots,\lambda_r)$, we denote $r'$ by the minimum even integer that is equal to or greater than  $r$, if $r'\neq r$, let $\lambda_{r'}=0$,  define
\begin{align*}|\lambda\rangle_{(gq,b)}&:={\phi}^{[\beta](\lambda_1)}_{\lambda_1}e^{-\theta}{\phi}^{[\beta](\lambda_2)}_{\lambda_2}e^{-\theta}\cdots{\phi}^{[\beta](\lambda_{r'})}_{\lambda_{r'}}e^{-\theta}\rvac\\
=&\begin{cases} {\phi}^{[\beta](\lambda_1)}_{\lambda_1}e^{-\theta}{\phi}^{[\beta](\lambda_2)}_{\lambda_2}e^{-\theta}\cdots{\phi}^{[\beta](\lambda_r)}_{\lambda_r}e^{-\theta}\rvac, & \text{$r:$ even}\\
{\phi}^{[\beta](\lambda_1)}_{\lambda_1}e^{-\theta}{\phi}^{[\beta](\lambda_2)}_{\lambda_2}e^{-\theta}\cdots{\phi}^{[\beta](\lambda_r)}_{\lambda_r}e^{-\theta}\phi_{0}^{[\beta]}e^{-\theta}\rvac. & \text{$r:$ odd}\end{cases}
\end{align*}

 The proof of the following key lemma
 will be given in the next subsection.

 \begin{lem}[Duality Lemma]\label{lem:dual GP vs gq}
For any arbitrary strictly  partitions $\lambda$ and $\mu$,
\begin{align}
_{(GP,b)}\langle\mu|\lambda\rangle_{(gq,b)}=\delta_{\mu\lambda}\label{lambda-mu},
\end{align}
where ${}_{(GP,b)}\langle\mu|=(|\mu\rangle_{(GP,b)})^*$.
\end{lem}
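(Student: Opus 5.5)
The plan is to prove the Duality Lemma directly in the Fock space. The Boson--Fermion correspondence (Lemma \ref{lem:x1x2}) cannot be used, since $gq_\lambda(y\,|\,b)$ is \emph{defined} by duality, and the lemma is exactly what will later identify $|\lambda\rangle_{(gq,b)}$ with $gq_\lambda(y\,|\,b)$. First I write out the bra. Since $b_n^*=b_{-n}$, we have $(e^{\theta^*})^*=e^{\theta}$, and so
\[
{}_{(GP,b)}\langle\mu|=\lvac e^{\theta}\,\Phi^{(\beta)(\mu_{r'})*}_{\mu_{r'}}\,e^{\theta}\cdots e^{\theta}\,\Phi^{(\beta)(\mu_1)*}_{\mu_1}.
\]
The pairing is therefore a vacuum expectation value of an alternating word in the operators $e^{\pm\theta}$, $\Phi^{(\beta)(k)*}_n$ and $\phi^{[\beta](l)}_m$. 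The strategy is induction on the total number of fermions. I take the innermost annihilation-type operator $\Phi^{(\beta)(\mu_1)*}_{\mu_1}$ and move it to the right through the whole ket until it reaches $\rvac$.

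\textbf{Step 1: vacuum annihilation.} By \eqref{Phi-betak}, $\Phi^{(\beta)(k)*}_{n}$ is a finite combination of $\Phi^{(\beta)*}_{n-j}$ with $0\le j\le k-1$. For $n=k\ge 1$ all indices $n-j$ are $\ge 1$, so \eqref{Phibet-vacu} gives $\Phi^{(\beta)(\mu_1)*}_{\mu_1}\rvac=0$ whenever $\mu_1>0$. The case $\mu_{r'}=0$, i.e.\ the padded $\phi_0$, has to be treated separately by the same method.

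\textbf{Step 2: passing through $e^{-\theta}$.} To move past $e^{-\theta}$ I use the $*$-image of \eqref{eq:phi-theta-star}. In generating-function form, conjugating $\Phi^{(\beta)*}(z)$ by $e^{\pm\theta}$ multiplies it by an explicit factor $(1+\beta z)^{\pm1}$. This turns $\Phi^{(\beta)(\mu_1)*}_{\mu_1}$ into a new combination of modes $\Phi^{(\beta)*}_n$, and I must track how the lowest index shifts.

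\textbf{Step 3: passing through $\phi^{[\beta](\lambda_i)}_{\lambda_i}$.} Here I use \eqref{Phi-phi}, $[\Phi^{(\beta)*}_n,\phi^{[\beta]}_m]_+=\delta_{mn}$. This produces a scalar term (the contraction) plus a term in which $\Phi^*$ has moved one step further right. Expanding the equivariant modes through their generating functions, the contraction with the $i$-th creation operator is the coefficient
\[
[z^{\lambda_i}w^{-\mu_1}]\;\frac{\prod_{j=1}^{\mu_1-1}(1-w b_j)/(1+\beta b_j)}{\prod_{j=1}^{\lambda_i} (z-b_j)/z\;\cdot\;\prod_{j=1}^{\lambda_i-1}(1+\beta b_j)^{-1}}\;\delta(z,w)\times(\text{powers of }1+\beta z\text{ from the }e^{\pm\theta}),
\]
where $\delta(z,w)$ denotes the formal delta series, suitably interpreted after applying $*$.

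\textbf{Step 4: triangularity and the diagonal.} This is the step I expect to be the main obstacle. The goal is to show that this contraction vanishes when $\lambda_i<\mu_1$ and equals exactly $1$ when $\lambda_i=\mu_1$ and the conjugation factors are taken into account. The mismatch between the products $\prod_{j=1}^{k-1}$ and $\prod_{j=1}^{k}$, the factors $(1+\beta b_j)$, and the $(1+\beta z)$ factors coming from the $e^{\pm\theta}$ must cancel exactly on the diagonal, and this bookkeeping needs to be done carefully. I would first verify it for a single pair, $r=1$ padded to $r'=2$.

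\textbf{Step 5: the ordering argument.} Steps 1--4 express the pairing as a signed sum over $i$ of (contraction of $\mu_1$ with $\lambda_i$) times a pairing of shorter words. Because $\lambda$ is strict, $\lambda_1$ is its largest part, and by symmetry $\mu_1$ is the largest part of $\mu$. Swapping the roles of $\lambda$ and $\mu$ in the triangularity statement then forces $\mu_1=\lambda_1$, and only the $i=1$ term survives. The remaining factors $e^{\theta}$ (from the bra) and $e^{-\theta}$ (from the ket) become adjacent and cancel. The induction hypothesis applied to $(\mu_2,\dots)$ and $(\lambda_2,\dots)$ then gives $\delta_{\mu\lambda}$.
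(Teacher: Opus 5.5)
\textbf{Forward half (sound).} This part is simpler than you fear. For $i=1$ the operators $\Phi^{(\beta)(\mu_1)*}_{\mu_1}$ and $\phi^{[\beta](\lambda_1)}_{\lambda_1}$ are adjacent, so no $e^{\pm\theta}$ factor enters. With $d_{(k)}(b)=\prod_{l=1}^{k-1}(1+\beta b_l)$, the contraction is $\frac{d_{(\lambda_1)}(b)}{d_{(\mu_1)}(b)}\,[t^{\mu_1-\lambda_1}]\prod_{j=1}^{\mu_1-1}(1-tb_j)\big/\prod_{j=1}^{\lambda_1}(1-tb_j)=\delta_{\mu_1\lambda_1}$. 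For $i\ge 2$, each $e^{-\theta}$ crossed multiplies $\Phi^{(\beta)*}(z)$ by $(1+\beta z^{-1})^{-1}$. The contraction then becomes $\frac{d_{(\lambda_i)}(b)}{d_{(\mu_1)}(b)}\sum_{l\ge0}\binom{1-i}{l}\beta^l h_{\mu_1-\lambda_i+l}(b_1,\ldots,b_{\lambda_i};-b_1,\ldots,-b_{\mu_1-1})$. This vanishes for $\lambda_i<\mu_1$, but on the diagonal it equals $(1+\beta b_{\mu_1})^{1-i}\neq 1$. So the cancellation you expect in Step 4 does not occur, though harmlessly, since only $i=1$ is needed there. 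This settles $\mu_1>\lambda_1$ and $\mu_1=\lambda_1$. You still have to set up the induction for unpadded words, because removing one pair changes the parity of both lengths.

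\textbf{The gap: the case $\mu_1<\lambda_1$.} Step 5 dismisses this case by ``symmetry''. Here the $i=1$ contraction vanishes, but the contractions with $\lambda_i\ge\mu_1$, $i\ge 2$, are in general nonzero. The leftover words, now containing $e^{\theta}\phi^{[\beta](\lambda_1)}_{\lambda_1}$ and two adjacent $e^{-\theta}$'s, are not of the inductive form. The problem is also not symmetric under $\lambda\leftrightarrow\mu$, for three reasons.
\begin{itemize}
\item Moving $\phi^{[\beta](\lambda_1)}_{\lambda_1}$ left across an $e^{\theta}$ multiplies $\phi^{[\beta]}(w)$ by $(1+\beta w)$, which raises degrees. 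The mirrored contraction with $\Phi^{(\beta)(\mu_j)*}_{\mu_j}$ is therefore $\frac{d_{(\lambda_1)}(b)}{d_{(\mu_j)}(b)}\sum_{q=0}^{j-1}\binom{j-1}{q}\beta^q h_{\mu_j-\lambda_1+q}(b_{\mu_j},\ldots,b_{\lambda_1})$. It does not vanish merely because $\mu_j<\lambda_1$: for $j=2$, $\mu_2=\lambda_1-1$ it equals $\beta(1+\beta b_{\lambda_1-1})$. It vanishes only through the strictness bound $\mu_j+j-1\le\mu_1<\lambda_1$.
\item Since $\lvac e^{\theta}\neq\lvac$, the operator must also cross the leftmost $e^{\theta}$. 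The mode $\phi^{[\beta]}_0$ then survives against $\lvac$ whenever $\lambda_1\le r'$.
\item The padded $\phi^{(\beta)*}_0$ satisfies $[\phi^{(\beta)*}_0,\phi^{[\beta]}_n]_+=2\delta_{n,0}+\beta\delta_{n,1}$, not a Kronecker delta.
\end{itemize}
The resulting boundary terms cancel only against each other. Already for $\mu=(1)$, $\lambda=(2,1)$, your expansion yields $-(1+\beta b_1)^{-1}\lvac e^{\theta}\phi^{(\beta)*}_0e^{\theta}\phi^{[\beta](2)}_2\rvac$ with nonzero coefficient. The bracket vanishes only because of the cancellation $\beta-\beta$ in $\lvac e^{\theta}\phi^{(\beta)*}_0\phi^{[\beta]}_1\rvac$, which is exactly the content of Lemma \ref{Lem:phi-phi}.

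\textbf{How the paper avoids this.} It conjugates every $e^{\pm\theta}$ outward at once and applies Wick's theorem. This gives the determinant of Proposition \ref{thm:b-c} for general $c$, with the padded entries handled by Lemmas \ref{Lem:phi-Phi} and \ref{Lem:phi-phi}. The case $\lambda_a>\mu_a$ is then killed globally by a zero block (Lemma \ref{lem:incl}) rather than term by term, and at $c=b$ the matrix is triangular with diagonal $\delta_{\lambda_i\mu_i}$. To complete your route, you must carry out the mirrored argument explicitly, using strictness of $\mu$ and these boundary computations.
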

\begin{thm}\label{thm: gq VEV}
For $\lambda\in \SP$, we have
   $$gq_{\lambda}(y\,|\,b)=\lvac  e^{\mathcal{H}^{[\beta]}(y)}|\lambda\rangle_{(gq,b)}.$$
\end{thm}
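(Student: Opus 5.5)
The plan is to deduce Theorem \ref{thm: gq VEV} from the Duality Lemma \ref{lem:dual GP vs gq}, combined with Theorem \ref{thm:GP as VEV} and the Boson--Fermion correspondence (Lemma \ref{lem:x1x2}). The defining property \eqref{eq:GP-gq} says that $\{gq_\lambda(y\,|\,b)\}$ is the dual basis to $\{GP_\lambda(x\,|\,b)\}$ with respect to the pairing whose reproducing kernel is $\prod_{i,j}(1-\bar x_iy_j)/(1-x_iy_j)$. So it suffices to prove two things. First, this kernel is exactly the reproducing kernel of the pairing \eqref{eq:pp}. Second, the images of the vectors $|\lambda\rangle_{(gq,b)}$ under the bosonization map $|v\rangle\mapsto \lvac e^{\mathcal{H}^{[\beta]}(y)}|v\rangle$ are dual to the images of ${}_{(GP,b)}\langle\mu|$ under the first isomorphism of Lemma \ref{lem:x1x2}.

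For the first point, I would check that
\[
\lvac e^{\mathcal{H}^{(\beta)}(x)}e^{\mathcal{H}^{[\beta]*}(y)}\rvac
=\exp\Big(2\sum_{n\ \mathrm{odd}}\tfrac{1}{n}p_n^{(\beta)}(x)p_n^{[\beta]}(y)\Big)
=\prod_{i,j}\frac{1-\bar x_iy_j}{1-x_iy_j}.
\]
This follows from the Heisenberg relation $[b_m,b_n]=\frac m2\delta_{m+n,0}$ together with the $\beta$-shifted forms of $p_n^{(\beta)}$ and $p_n^{[\beta]}$. It is a routine Cauchy-kernel computation, and in the nonequivariant setting it is already implicit in Theorem \ref{thm:GP-gq} and \cite{Iwao2023-2}. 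Consequently, for any pair of dual bases $\{\langle u_\mu|\}$ of $\hat{\mathcal F}_{\mathrm{even}}$ and $\{|v_\lambda\rangle\}$ of $\mathcal F_{\mathrm{even}}$ (extended $\RR$-linearly), we get
\[
\prod_{i,j}\frac{1-\bar x_iy_j}{1-x_iy_j}=\sum_\lambda \lvac e^{\mathcal{H}^{(\beta)}(x)}|u_\lambda^*\rangle\,\lvac e^{\mathcal{H}^{[\beta]}(y)}|v_\lambda\rangle .
\]

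For the second point, take $\langle u_\lambda|={}_{(GP,b)}\langle\lambda|$, so that $|u_\lambda^*\rangle=|\lambda\rangle_{(GP,b)}$, and take $|v_\lambda\rangle=|\lambda\rangle_{(gq,b)}$. The Duality Lemma \ref{lem:dual GP vs gq} gives exactly biorthogonality. By Theorem \ref{thm:GP as VEV}, the $x$-factors are $GP_\lambda(x\,|\,b)$. Comparing with \eqref{eq:GP-gq}, and using that the $GP_\lambda(x\,|\,b)$ are linearly independent, we can extract coefficients and identify $\lvac e^{\mathcal{H}^{[\beta]}(y)}|\lambda\rangle_{(gq,b)}$ with $gq_\lambda(y\,|\,b)$. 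The uniqueness statement preceding \eqref{eq:GQ-gp} then finishes the argument.

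The main obstacle is justifying that the biorthogonal families actually span, that is, that they form dual bases after base change to $\RR$ or $\mathbb F$, so that the kernel expands as claimed. I would handle this with a triangularity argument. By \eqref{phi-betak}-type expansions, $|\lambda\rangle_{(gq,b)}$ equals $|\lambda\rangle_{gq}$ plus an $\mathbb F$-combination of terms $|\mu\rangle_{gq}$ with $\mu$ larger in dominance or size, and the transition matrix is unitriangular. Theorem \ref{thm:GP-gq} then shows these vectors form a (topological) basis. A secondary subtlety is that the expansion is an infinite sum. I would control it by completing in the $y$-degree filtration, which is already built into $\hL_\Rep(y)$.
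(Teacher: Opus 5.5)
Your proposal is correct and follows the paper's own route: the paper proves the theorem in one line, deducing from Lemma~\ref{lem:x1x2}, the Duality Lemma~\ref{lem:dual GP vs gq} and Theorem~\ref{thm:GP as VEV} that the vectors $\lvac e^{\mathcal{H}^{[\beta]}(y)}|\lambda\rangle_{(gq,b)}$ are dual to $\{GP_\lambda(x\,|\,b)\}$, and your Cauchy-kernel check and spanning discussion make explicit what the paper leaves implicit. One small correction: the transition matrix from $|\lambda\rangle_{(gq,b)}$ to the $|\mu\rangle_{gq}$ has units of $\RR$ on the diagonal rather than $1$'s (e.g.\ $1+\beta b_1$ for $\lambda=(2)$), and the spanning step can be bypassed entirely, because $\{GP_\lambda(x\,|\,b)\}$ and $\{GP_\mu(x)\}$ are both $\RR$-bases of $G\Gamma_{\RR}(x)$, so an element of the completed $y$-side is already determined by its pairings with all $GP_\lambda(x\,|\,b)$.
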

 \begin{proof}
From Lemma \ref{lem:x1x2},
Lemma \ref{lem:dual GP vs gq}, and Theorem \ref{thm:GP as VEV} we deduce that the family of functions
$\{\lvac  e^{\mathcal{H}^{[\beta]}(y)}|\lambda\rangle_{(gq,b)}\mid \lambda\in \SP\}$ is dual to
$\{GP_\lambda(x\,|\,b)\mid\lambda\in \SP\}.$
\end{proof}
\begin{cor}\label{prop:GQ(x|b)}
\label{cor:gen-fac-gq}
The factorial $gq$-functions
are given by the following formula
\begin{equation}
gq_{\lambda}(y\,|\,b)=[z_1^{\lambda_1}\dots z_r^{\lambda_r}]\prod_{i=1}^r gq^{(\lambda_i)}(z_i)\prod_{1\leq i<j\leq r}\frac{z_i\ominus z_j}{z_i\oplus z_j},\label{gqxbvac}
\end{equation}
where
\begin{align*}
gq^{(\lambda_i)}(z_i)=\prod_{j=1}^{\lambda_i}\prod_{k=1}^{\lambda_i-1}\frac{z_i(1+\beta b_k)}{z_i-b_j} gq(z_i),\quad  gq(z_i)=\prod_{j}\frac{1-y_j\bar{z_i}}{1-y_jz_i}.
\end{align*}
\end{cor}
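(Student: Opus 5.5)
The plan is to start from Theorem \ref{thm: gq VEV}, which writes $gq_\lambda(y\,|\,b)=\lvac e^{\mathcal{H}^{[\beta]}(y)}|\lambda\rangle_{(gq,b)}$. I would then pass from the modes $\phi^{[\beta](\lambda_i)}_{\lambda_i}$ to the generating fields, in the same way Theorem \ref{thm:GP as VEV} was derived from Corollary \ref{cor:gq-GP}. The key observation is that the field $\phi^{[\beta](k)}(z)$ equals the scalar series
\[
\prod_{j=1}^k\frac{z}{z-b_j}\prod_{j=1}^{k-1}(1+\beta b_j),
\]
expanded in the appropriate direction, times $\phi^{[\beta]}(z)$. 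Since the $b_j$ are treated as scalars under the pairing, we can write
\[
\phi^{[\beta](\lambda_i)}_{\lambda_i}=[z_i^{\lambda_i}]\,\phi^{[\beta](\lambda_i)}(z_i).
\]
The vacuum expectation value then becomes the coefficient $[z_1^{\lambda_1}\cdots z_{r'}^{\lambda_{r'}}]$ of the product of these scalar prefactors with the undeformed correlator
\[
\lvac e^{\mathcal{H}^{[\beta]}(y)}\phi^{[\beta]}(z_1)e^{-\theta}\cdots\phi^{[\beta]}(z_{r'})e^{-\theta}\rvac .
\]

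Next I would evaluate this undeformed correlator. For the modes it is exactly the non-equivariant $gq$ vacuum expectation value of Theorem \ref{thm:GP-gq}. Its generating-function form is the one behind the $gq_\lambda(y)$ formula in Corollary \ref{cor:gq-GP}, namely
\[
\prod_i gq(z_i)\prod_{i<j}\frac{z_i\ominus z_j}{z_i\oplus z_j}.
\]
Strictly, that corollary is stated only as a coefficient extraction. I would therefore reread it as the identity of full generating series, obtained in Iwao's framework by commuting $e^{\mathcal H^{[\beta]}}$ through the fields and moving the $e^{-\theta}$ factors with Lemma \ref{Lem:beta-theta}. The factor produced when $e^{-\theta}$ passes $\phi^{[\beta]}(z_j)$ combines with the two-point function \eqref{phivalue} via Wick's theorem to give the $\ominus/\oplus$ ratio.

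The point I expect to need the most care is the odd-$r$ case. There the extra factor $\phi_0^{[\beta]}e^{-\theta}$ carries $k=0$, hence no deformation, and its coefficient extraction at $z_{r'}^0$ must reduce the $r'$-fold expression to the $r$-fold one. I would handle this exactly as in the non-equivariant statement: the $z_{r'}^0$ term of $gq(z_{r'})\prod_{i<r'}\frac{z_i\ominus z_{r'}}{z_i\oplus z_{r'}}$ is $1$, given the expansion region $((z_1))\cdots((z_r))$.

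Finally, I would compare the prefactors with the stated $gq^{(\lambda_i)}(z_i)$. This should be bookkeeping: the product $\prod_{j=1}^{\lambda_i}\frac{z_i}{z_i-b_j}\prod_{k=1}^{\lambda_i-1}(1+\beta b_k)$ is what the displayed formula intends. I would also check that the expansion direction, in nonnegative powers of $b_j/z_i$, agrees with the mode formula $\phi^{[\beta](k)}_i=\prod(1+\beta b_\ell)\sum_j h_j(b)\phi^{[\beta]}_{i+j}$. Doing so justifies taking coefficients in $\mathbb{Q}(\beta)[y]((z_1))\cdots((z_r))$.
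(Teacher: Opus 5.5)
Your proposal is correct and follows essentially the same route as the paper. You write each mode of $|\lambda\rangle_{(gq,b)}$ as a coefficient of the deformed field $\phi^{[\beta](\lambda_i)}(z_i)$, pull out the scalar prefactors, identify the remaining undeformed correlator with $\prod_i gq(z_i)\prod_{i<j}\frac{z_i\ominus z_j}{z_i\oplus z_j}$, and dispose of the odd case via the $z_{r'}^0$ extraction. You are right that Corollary \ref{cor:gq-GP} must be used as a full generating-series identity, which the paper does only tacitly. One correction: the two-point function you need is $\lvac\phi^{[\beta]}(z_i)\phi^{[\beta]}(z_j)\rvac=\frac{z_i-z_j}{z_i\oplus z_j}$, which follows from $\phi^{[\beta]}(z)=\phi\bigl(z/(1+\tfrac{\beta}{2}z)\bigr)$, not \eqref{phivalue}, which concerns $\phi^{[\beta]*}$.
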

\begin{proof}
It follows from Corollary \ref{cor:gq-GP},  the right hand side of \eqref{gqxbvac} can be rewritten as
 \begin{align*}
&gq_{\lambda}(y\,|\,b)=[z_1^{\lambda_1}\cdots z_{r'}^{\lambda_{r'}}]\langle
\mathrm{vac}|e^{H^{[\beta]}(y)}\phi^{[\beta](\lambda_1)}(z_1) e^{-\theta}\cdots\phi^{[\beta](\lambda_{r'})}(z_{r'}) e^{-\theta}
|\mathrm{vac}\rangle \\
=&[z_1^{\lambda_1}\cdots z_{r}^{\lambda_{r'}}]\prod_{i=1}^{r}\prod_{j=1}^{\lambda_i}\prod_{k=1}^{\lambda_i-1}\frac{z_i(1+\beta b_k)}{z_i-b_j}\langle
\mathrm{vac}|e^{H^{[\beta]}(y)}\phi^{[\beta]}(z_1) e^{-\theta}\cdots\phi^{[\beta]}(z_{r'}) e^{-\theta}
|\mathrm{vac}\rangle\\
=&[z_1^{\lambda_1}\cdots z_{r}^{\lambda_{r}}]\prod_{i=1}^{r}\prod_{j=1}^{\lambda_i}\prod_{k=1}^{\lambda_i-1}\frac{z_i(1+\beta b_k)}{z_i-b_j} gq(z_i)\prod_{1\leq i<j\leq r}\frac{z_i\ominus z_j}{z_i\oplus z_j},
\end{align*}
where $gq(z_i)=\prod_{j\geq1}\frac{1-x_j\bar{z_i}}{1-x_jz_i}$ and $r'$ is the minimum even integer that is equal to or greater than  $r$, if $r'\neq r$, let $\lambda_{r'}=0$.
 \end{proof}



\begin{remark}
For the equivariant homology version of $gp_\lambda(y|b)$ the analog of above Corollary was obtained by Naruse \cite{Naruse}.
\end{remark}

 \subsection{Pfaffian formula for $gq_\lambda(y\,|\,b)$}
 \label{sec:Pf_gq}
 \label{ssec:Pf gq}





To prove a Pfaffian formula for the factorial
$gq$-function, we first state the following proposition concerning the Pfaffian.
\begin{prop}\label{prop:Knuth}Let $\{G_m^{i}\}$ be a set of
commuting variables indexed by $m\in \Z$ and $i\in \{1,\ldots,r\}$. Let
$G^{i}(u)=\sum_{m\in \Z}G_m^{i} u^{-m}$ with $u$ an indeterminate.
Then, for any strict partition $\lambda=(\lambda_1> \cdots> \lambda_r> 0)$ with $r$ even, we have
\begin{equation}\mathrm{Pf}(G_{\lambda_i,\lambda_j}^{i,j})=
[u_1^{-\lambda_1}\cdots u_r^{-\lambda_r}]
\prod_{i=1}^r G^{i}(u_i)\prod_{1\le i<j\le r}
\frac{u_j- u_i}{u_j\oplus u_i},\label{eq:Pf F}
\end{equation}
where
\begin{equation}
G_{\lambda_i,\lambda_j}^{i,j}
=[u_i^{-\lambda_i}u_j^{-\lambda_j}]
G^{i}(u_i)G^{j}(u_j)
\frac{u_j- u_i}{u_j\oplus u_i}.
\end{equation}
\end{prop}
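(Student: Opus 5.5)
The plan is to reduce the identity to the classical Schur--Knuth Pfaffian expansion of a product of pairwise factors. The key observation is that the kernel $\frac{u_j-u_i}{u_j\oplus u_i}$ is, up to the deformation, the two-point function of the deformed neutral fermion. By \eqref{Phivalue},
\[
\lvac\phi^{(\beta)}(u_i^{-1})\phi^{(\beta)}(u_j^{-1})\rvac=\frac{u_j-u_i}{u_j\oplus u_i}
\]
with the prescribed expansion. Wick's theorem then gives
\[
\lvac\phi^{(\beta)}(u_1^{-1})\cdots\phi^{(\beta)}(u_r^{-1})\rvac=\mathrm{Pf}\left(\frac{u_j-u_i}{u_j\oplus u_i}\right)_{i<j},
\]
and this Pfaffian identity of formal series is the engine. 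Its combinatorial counterpart is the Schur Pfaffian identity
\[
\prod_{i<j}\frac{u_j-u_i}{u_j\oplus u_i}=\mathrm{Pf}\left(\frac{u_j-u_i}{u_j\oplus u_i}\right)
\]
for even $r$, which is in fact an algebraic identity in rational functions. It follows because $\frac{a-b}{a+b+\beta ab}$ becomes the ordinary Schur kernel $\frac{A-B}{A+B}$ after the change of variable $u\mapsto \frac{2u}{2+\beta u}$ or a similar Möbius substitution that linearizes $\oplus$; Schur's classical Pfaffian identity $\mathrm{Pf}\bigl(\frac{A_i-A_j}{A_i+A_j}\bigr)=\prod_{i<j}\frac{A_i-A_j}{A_i+A_j}$ then transfers. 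I will verify this substitution explicitly: $a\oplus b$ corresponds to $\frac{2}{\beta}\bigl((1+\tfrac{\beta}{2}a)(1+\tfrac\beta2 b)-1\bigr)$-type expressions, so the substitution $A=1+\beta a$ (up to sign conventions) turns $\frac{a-b}{a\oplus b}$ into a scalar multiple of $\frac{A-B}{AB-1}$, and Schur's identity has a version for that kernel as well. Alternatively, I will just cite Wick plus the fermionic computation, which avoids the substitution.

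With the product identity in hand, the rest is linearity. I multiply both sides by $\prod_i G^i(u_i)$. The Pfaffian expands as a signed sum over perfect matchings $\sigma$ of $\{1,\dots,r\}$ of products of pair factors $\frac{u_{j}-u_{i}}{u_{j}\oplus u_{i}}$ over matched pairs $(i,j)$, $i<j$. Since $\prod_i G^i(u_i)$ factors over the pairs of any matching, each term factors as a product over pairs of $G^i(u_i)G^j(u_j)\frac{u_j-u_i}{u_j\oplus u_i}$, and these involve disjoint sets of variables. Extracting the coefficient $[u_1^{-\lambda_1}\cdots u_r^{-\lambda_r}]$ therefore factors into the product of $G^{i,j}_{\lambda_i,\lambda_j}$. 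Summing with signs gives $\mathrm{Pf}(G^{i,j}_{\lambda_i,\lambda_j})$.

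The main obstacle is bookkeeping of expansion domains. The coefficient extraction must be done in one consistent ring, namely the iterated Laurent series with $|u_1|\gg\cdots\gg|u_r|$ or the reverse, chosen as in Corollary \ref{cor:gq-GP}. For the factorization argument to hold, the pair factor for $i<j$ must be expanded in the same direction in the full product as in the definition of $G^{i,j}$. This holds since each factor is expanded with respect to $u_j/u_i$ in both places. I must also ensure that coefficient extraction commutes with the infinite sums, which needs the $G^i$ to be bounded in the appropriate direction, or else working degree-wise. I would state this as the standing convention and check that the Schur-type product identity holds as an identity in that ring, which it does because both sides are expansions of the same rational function in the same direction.
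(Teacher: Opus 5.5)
Your proposal is correct and follows essentially the same route as the paper: use the Schur--Knuth identity $\prod_{i<j}\frac{u_j-u_i}{u_j\oplus u_i}=\mathrm{Pf}\bigl(\frac{u_j-u_i}{u_j\oplus u_i}\bigr)$ for even $r$, absorb $\prod_i G^i(u_i)$ into the pair factors of each perfect matching, and extract coefficients pair by pair, since each pair involves its own variables. Your one addition is justifying the identity, which the paper only cites from Knuth, and your first substitution does this exactly, because $X=\frac{2u}{2+\beta u}$ gives $\frac{X_i-X_j}{X_i+X_j}=\frac{u_i-u_j}{u_i\oplus u_j}$; the Wick-theorem alternative would not suffice by itself, since it identifies the vacuum expectation value with the Pfaffian but not with the product.
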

\begin{proof} From a formula due to Knuth \cite{Knu}, the
right hand side of \eqref{eq:Pf F} equals to
\begin{align*}
&[u_1^{-\lambda_1}\cdots u_r^{-\lambda_r}]
\prod_{i=1}^r G^{i}(u_i)\;
\mathrm{Pf}\left(
\frac{u_j- u_i}{u_j\oplus u_i}\right)_{1\le i<j\le r}\\
&=[u_1^{-\lambda_1}\cdots u_r^{-\lambda_r}]\;
\mathrm{Pf}\left(G^{i}(u_i)G^{j}(u_j)
\frac{u_j- u_i}{u_j\oplus u_i}\right)_{1\le i<j\le r}.
\end{align*}
Let us denote
$G_{ij}=G^{i}(u_i)G^{j}(u_j)
{(u_j- u_i)}/(u_j\oplus u_i)$. Each term of
the Pfaffian is of the form
\begin{equation}
\pm G_{i_1i_2}\cdots G_{i_{r-1}i_r},\label{eq:FFFF}
\end{equation}
where $\{1,\ldots,r\}=\{i_1,i_2\}\sqcup\{i_3,i_4\}\sqcup \cdots\sqcup\{i_{r-1},i_r\}$. For each pair $(u_{i_k}, u_{i_{k+1}})$ of variables,
the factor $G_{i_ki_{k+1}}$ is the only function that depends on $u_{i_k}, u_{i_{k+1}}$ among the factors of \eqref{eq:FFFF}.
Therefore we have
\begin{equation}
[u_1^{-\lambda_1}\cdots u_r^{-\lambda_r}]\left(\pm G_{i_1i_2}\cdots G_{i_{r-1}i_r}\right)
=\pm \prod_{k=0}^{r/2-1}
[u_{i_{2k+1}}^{-\lambda_{2k+1}}u_{i_{2k+2}}^{-\lambda_{i_{2k+2}}}]G_{i_{2k+1}i_{2k+2}},
\end{equation}
and we have the desired result.
\end{proof}

Let
\begin{align*}
    gq^{(k)}(z)&:=\prod_{j=1}^k\frac{z}{z-b_j}\prod_{j=1}^{k-1}(1+\beta b_j)gq(z)
=\sum_{i=0}^\infty gq_i^{(k)}(y\,|\,b)z^i,
\end{align*}
then $gq_i^{(k)}(y\,|\,b)=\prod\limits_{j=1}^{k-1}(1+\beta b_j)\sum\limits_{j=0}^\infty h_j(b_1,\dots,b_k)gq_{i+j}(y)$.

From Proposition \ref{prop:Knuth} and Corollary \ref{prop:GQ(x|b)} we
immediately obtain
the Pfaffian formula for
$gq_\lambda(y\,|\,b).$
\begin{cor}\label{cor:Pf gq}
For $\lambda\in \SP$, if r be the smallest even integer greater than or equal
to $\ell(\lambda)$ then
   $$gq_{\lambda}(y\,|\,b)=\mathrm{Pf}\Big(\sum_{q=0}^{\lambda_j}\sum_{p=-q}^{\lambda_i} a_{p,q}^{i,j}gq_{\lambda_i-p}^{(\lambda_i)}(y\,|\,b)gq_{\lambda_j-q}^{(\lambda_j)}(y\,|\,b)\Big)_{r\times r},$$
%
where $a_{p,q}^{i,j}\in \mathbb{Q}(\beta)$ defined by the following expansion in $\mathbb{Q}(\beta)((z_i))((z_j))$
\begin{equation*}
    \frac{1}{(1+\beta z_i)^{i-1}(1+\beta z_j)^{j-1}}\frac{z_i-z_j}{z_i\oplus z_j}=\sum_{q=0}^\infty\sum_{p=-q}^\infty a_{p,q}^{i,j}z_i^{p}z_j^q,
\end{equation*}
\end{cor}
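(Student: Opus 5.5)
The Pfaffian formula should follow by feeding the generating-function expression of Corollary~\ref{cor:gen-fac-gq} into Proposition~\ref{prop:Knuth}. Before that, the generating function has to be rewritten in the shape Proposition~\ref{prop:Knuth} expects.

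\textbf{Step 1: rewrite the interaction factor.} If $r=\ell(\lambda)$ is odd, append $\lambda_r=0$, exactly as in the definition of $|\lambda\rangle_{(gq,b)}$, so that we may assume $r$ is even. We then need to convert
$$\prod_{i<j}\frac{z_i\ominus z_j}{z_i\oplus z_j}$$
into the form $\prod_{i<j}(u_j-u_i)/(u_j\oplus u_i)$ used in Proposition~\ref{prop:Knuth}. Since $z_i\ominus z_j=(z_i-z_j)/(1+\beta z_j)$, the full product equals
$$\prod_{i<j}\frac{z_i-z_j}{z_i\oplus z_j}\cdot\prod_{j=1}^r(1+\beta z_j)^{-(j-1)}.$$
The single-variable factor $(1+\beta z_j)^{-(j-1)}$ is absorbed into a modified series $G^j(z_j)=gq^{(\lambda_j)}(z_j)(1+\beta z_j)^{-(j-1)}$. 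Proposition~\ref{prop:Knuth} is stated with $u^{-m}$ and ordering $u_j-u_i$. We apply it with $u_i=z_i$ and exponents $-\lambda_i$, or equivalently substitute $u_i=z_i^{-1}$ and track signs. The proof of Proposition~\ref{prop:Knuth} only uses the combinatorics of Pfaffian expansion together with Knuth's identity
$$\mathrm{Pf}\bigl((z_i-z_j)/(z_i\oplus z_j)\bigr)=\prod_{i<j}(z_i-z_j)/(z_i\oplus z_j).$$
So the same argument gives, verbatim,
$$gq_\lambda(y\,|\,b)=\mathrm{Pf}\Bigl([z_i^{\lambda_i}z_j^{\lambda_j}]\,G^i(z_i)G^j(z_j)\tfrac{z_i-z_j}{z_i\oplus z_j}\Bigr).$$
The factorization of the coefficient extraction over the pairs of a perfect matching still holds, because each variable appears in exactly one factor of each Pfaffian term.

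\textbf{Step 2: identify the matrix entries.} Expand
$$(1+\beta z_i)^{-(i-1)}(1+\beta z_j)^{-(j-1)}\frac{z_i-z_j}{z_i\oplus z_j}=\sum a^{i,j}_{p,q}z_i^pz_j^q,$$
and $gq^{(\lambda_i)}(z_i)=\sum_{m\ge0}gq^{(\lambda_i)}_m z_i^m$. Extracting $z_i^{\lambda_i}z_j^{\lambda_j}$ gives
$$\sum_{p,q}a^{i,j}_{p,q}\,gq^{(\lambda_i)}_{\lambda_i-p}\,gq^{(\lambda_j)}_{\lambda_j-q}.$$
Since the $gq$ series have only nonnegative powers, this forces $p\le\lambda_i$ and $q\le\lambda_j$. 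In $\mathbb{Q}(\beta)((z_i))((z_j))$, the factor $(z_i-z_j)/(z_i+z_j+\beta z_iz_j)$ expands as $\frac{1-z_j/z_i}{1+z_j/z_i+\beta z_j}$, whose $z_j^q$-coefficient involves powers $z_i^{p}$ with $p\ge -q$. Hence $q\ge0$ and $p\ge -q$, which yields exactly the stated summation ranges.

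\textbf{Main obstacle.} The delicate point is consistency of expansion domains and the odd-length case. Corollary~\ref{cor:gen-fac-gq} expands in $((z_1))\cdots((z_r))$, and we must check that the pairwise extraction in Proposition~\ref{prop:Knuth} is compatible with this iterated-Laurent ordering. I would verify this by noting that each factor depends only on $(z_i,z_j)$ with $i<j$, expanded in $((z_i))((z_j))$. For the padded entry $\lambda_r=0$, I would check that $gq^{(0)}(z)=gq(z)$ in accordance with the convention for $k\le 0$, matching the $\phi_0^{[\beta]}$ insertion. The factor $(1+\beta z_j)^{-(j-1)}$ depends on the row index, and keeping it attached to the correct variable is exactly why $a^{i,j}_{p,q}$ depends on $i,j$.
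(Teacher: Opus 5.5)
\textbf{Verdict: your route is the paper's, but Step 2 has a genuine gap.}

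Your reduction is the paper's own one-line argument: feed Corollary~\ref{cor:gen-fac-gq} into Proposition~\ref{prop:Knuth}. Most of it is fine:
\begin{itemize}
\item writing $z_i\ominus z_j=(z_i-z_j)/(1+\beta z_j)$ and absorbing $(1+\beta z_j)^{-(j-1)}$ into the one-variable series;
\item using the Schur--Knuth Pfaffian for $(z_i-z_j)/(z_i\oplus z_j)$, which is legitimate because this equals $(X_i-X_j)/(X_i+X_j)$ with $X=z/(1+\beta z/2)$;
\item the lower limits $q\ge 0$, $p\ge -q$.
\end{itemize}

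The step that fails is ``since the $gq$ series have only nonnegative powers, this forces $p\le\lambda_i$ and $q\le\lambda_j$''. For $k\ge 1$, $gq^{(k)}(z)$ contains $\prod_{j\le k}z/(z-b_j)$. This must be expanded as $\sum_{n\ge 0}h_n(b_1,\dots,b_k)z^{-n}$. That is the expansion built into $\phi^{[\beta](k)}_i=\prod_{\ell<k}(1+\beta b_\ell)\sum_{j\ge 0}h_j(b_1,\dots,b_k)\phi^{[\beta]}_{i+j}$, and hence into Corollary~\ref{cor:gen-fac-gq}. Expanding in positive powers of $z$ instead would produce $1/b_j$.

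Consequently $gq^{(k)}_m\neq 0$ for $m<0$; for instance $gq^{(1)}_{-1}=b_1+b_1^2\,gq_1(y)+\cdots$. The slip originates in the paper's notation $gq^{(k)}(z)=\sum_{i\ge 0}gq^{(k)}_i z^i$, which its own formula $gq^{(k)}_i=\prod(1+\beta b_j)\sum_{j\ge0}h_j\,gq_{i+j}$ contradicts. You inherited it.

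\textbf{What the argument actually gives, and a counterexample to the truncation.} Without the nonnegativity claim, the coefficient extraction gives entries
$$\sum_{q\ge 0}\sum_{p\ge -q}a^{i,j}_{p,q}\,gq^{(\lambda_i)}_{\lambda_i-p}\,gq^{(\lambda_j)}_{\lambda_j-q}$$
with no upper limits. These sums converge because the coefficient of $z^{-n}$ in $gq^{(k)}(z)$ has $b$-degree at least $n$. The discarded terms are genuinely nonzero:
\begin{itemize}
\item Take $\lambda=(2,1)$, so the Pfaffian is the single $(1,2)$ entry, and set $\beta=0$.
\item Then $(z_1-z_2)/(z_1+z_2)=1-2z_2/z_1+2z_2^2/z_1^2-\cdots$, so $a^{1,2}_{-2,2}=2$.
\item The omitted terms with $q\ge 2$ total $2\,gq^{(2)}_4\,gq^{(1)}_{-1}+O(b^2)=2b_1\,gq_4(y)+O(b^2)\neq 0$.
\end{itemize}
As an independent check, duality against $GP_{31}(x\,|\,b)=GP_{31}(x)-(b_1+b_2)GP_{21}(x)+O(b^2)$ at $\beta=0$ gives $gq_{21}(y\,|\,b)=gq_{21}(y)+(b_1+b_2)gq_{31}(y)+O(b^2)$. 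This is the untruncated sum, not the truncated one.

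So this step cannot be patched. Your argument proves the Pfaffian formula with the upper limits $\lambda_i,\lambda_j$ removed. The stated truncated ranges would require $gq^{(k)}_m=0$ for $m<0$, which is false.
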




\subsection{Jacobi-Trudi type formula of the
expansion coefficiants }
\label{sec:JT}

Now, consider the expansion of $gq_\lambda(y\,|\,b)$ and $GP_\mu(x\,|\,b)$ as in Theorem \ref{thm:gq(xb)}.
\begin{thm}\label{thm:gq(xb)}
The following expansion holds:
\begin{align}
gq_\lambda(y\,|\,b) &= \sum_{\mu: \mu \supset \lambda,\;
\ell(\mu)=\ell(\lambda)}
c_{\lambda\mu}^{gq}(b\,|\,c) gq_\mu(y\,|\,c),\label{exp_eq_gq}\\
GP_\mu(x\,|\,c) &= \sum_{\lambda: \lambda \subset \mu,\;\ell(\lambda)=\ell(\mu)}
c_{\mu\lambda}^{GP}(c\,|\,b) GP_\lambda(x\,|\,b)\label{exp_eq_GP},
\end{align} where
\begin{align}
&c_{\lambda\mu}^{gq}(b\,|\,c)
=c_{\mu\lambda}^{GP}(c\,|\,b) \\
&=\frac{d_{\lambda}(b)}{d_{\mu}(c)}
\det \left(
\sum_{k=0}^\infty\beta^k
\binom{i-j}{k}
h_{\mu_i-\lambda_j+k}
(b_1,\ldots,b_{\lambda_j};-c_1,\ldots,-c_{\mu_i-1})
\right).
\end{align}
\end{thm}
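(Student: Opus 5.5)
Both expansions will be derived from a single fermionic identity. Under the Boson--Fermion correspondence (Lemma \ref{lem:x1x2}), Theorem \ref{thm: gq VEV} realizes $gq_\lambda(y\,|\,b)$ as the image of $|\lambda\rangle_{(gq,b)}$. The plan is therefore to expand the vector $|\lambda\rangle_{(gq,b)}$ in the basis $\{|\mu\rangle_{(gq,c)}\}$. Once that is done, the Duality Lemma \ref{lem:dual GP vs gq} for the parameters $b$ and $c$ shows that the coefficient matrix is
\[
c^{gq}_{\lambda\mu}(b\,|\,c)={}_{(GP,c)}\langle\mu|\lambda\rangle_{(gq,b)}.
\]
Dually, pairing $|\mu\rangle_{(GP,c)}$ against $\{|\lambda\rangle_{(gq,b)}\}$ gives the expansion of $GP_\mu(x\,|\,c)$ in $GP_\lambda(x\,|\,b)$ with the same number. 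This proves $c^{gq}_{\lambda\mu}(b\,|\,c)=c^{GP}_{\mu\lambda}(c\,|\,b)$ without computation, and reduces everything to evaluating the single pairing ${}_{(GP,c)}\langle\mu|\lambda\rangle_{(gq,b)}$.

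To evaluate this pairing, I will pass to generating fields. The pairing equals the coefficient of $u_1^{-\mu_1}\cdots z_1^{\lambda_1}\cdots$ in
\[
\lvac\,(e^{\theta^*}\Phi^{(\beta)(\mu_{r'})}(u_{r'}^{-1}))^*\cdots\,\phi^{[\beta](\lambda_1)}(z_1)e^{-\theta}\cdots\rvac .
\]
The parameter factors $\prod(1-u^{-1}c_j)/(1+\beta c_j)$ and $\prod z(1+\beta b_k)/(z-b_j)$ pull out as scalars. What remains is the $b=c=0$ matrix element, which is a determinant by the anticommutator \eqref{Phi-phi}, $[\Phi^{(\beta)*}_n,\phi^{[\beta]}_m]_+=\delta_{mn}$. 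More precisely, I will commute the $e^{\pm\theta}$, $e^{\theta^*}$ factors through the fermions using Lemma \ref{Lem:beta-theta}; each passage multiplies a field by $(1+\beta z)$, or by its inverse. After this, the $j$-th $\phi^{[\beta]}$ and the $i$-th $\Phi^{(\beta)*}$ carry net weights $(1+\beta z_j)^{-(j-1)}$ and $(1+\beta u_i)^{\,i-1}$. Since all the annihilation-type fermions kill the vacuum by \eqref{Phibet-vacu}, Wick's theorem applied to the mixed pairs (the like pairs vanish for the annihilation reasons) gives a determinant
\[
\det\big([u_i^{-\mu_i}z_j^{\lambda_j}]\,\text{(parameter factors)}\,(1+\beta z)^{i-j}\,\delta(u,z)\big).
\]
Here the contraction $\langle\Phi^*(u)\phi(z)\rangle$ acts as a formal delta function identifying $u$ with $z$. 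The extra $\phi_0$ factors appearing when $r$ is odd should pair off trivially, giving the $0$-row convention.

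Extracting the coefficient then yields
\[
\frac{d_\lambda(b)}{d_\mu(c)}\,[z^{\mu_i-\lambda_j}]\,(1+\beta z)^{i-j}\prod_{k\le\lambda_j}\frac{1}{1-b_kz}\prod_{k<\mu_i}(1-c_kz).
\]
Expanding $(1+\beta z)^{i-j}=\sum_k\binom{i-j}{k}\beta^kz^k$ gives exactly $\sum_k\beta^k\binom{i-j}{k}h_{\mu_i-\lambda_j+k}(b;-c)$, once the substitutions $z\mapsto z^{-1}$ are tracked. The prefactor collects $d_\lambda(b)=\prod_j\prod_{k<\lambda_j}(1+\beta b_k)$ and the analogous $d_\mu(c)$. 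The support condition $\mu\supset\lambda$ with $\ell(\mu)=\ell(\lambda)$ should follow from the lengths: the number of fermions must match, and the determinant vanishes unless $\mu_i\ge\lambda_i$, since the entries with $\mu_i<\lambda_j$ for $i\ge j$ vanish by degree. The expected main obstacle is precisely the bookkeeping of the $e^{\theta}$ conjugations, which determines the exponent $i-j$, and the careful treatment of the odd-length case together with the directions of the formal expansions.
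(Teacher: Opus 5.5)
Your architecture is the paper's. Duality reduces both coefficients to the single pairing ${}_{(GP,c)}\langle\mu|\lambda\rangle_{(gq,b)}$. Proposition \ref{thm:b-c} then evaluates that pairing exactly as you plan: commute exponentials with Lemma \ref{Lem:beta-theta}, apply Wick's theorem, discard like pairs by degree, and extract with Lemma \ref{lem:integral2}.

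\textbf{The $e^{\theta}$ bookkeeping.} The gap is here, and it sits exactly where $\ell(\lambda)=\ell(\mu)$ is decided. Your weights amount to conjugating the $i$-th bra field by $e^{-(i-1)\theta}$ and the $j$-th ket field by $e^{-(j-1)\theta}$. That is what you get by moving the bra's factors $e^{\theta}$ to the left and dropping $e^{r'\theta}$ against $\lvac$. But $\lvac e^{\theta}\neq\lvac$; only $\lvac e^{\theta^*}=\lvac$ and $e^{\theta}\rvac=\rvac$ hold. The legitimate move, as in \eqref{eq:coef_c_b}, pushes every exponential to the right. This conjugates the fields by $e^{(r'+1-i)\theta}$ and $e^{(r'+1-j)\theta}$, so your version is off by a uniform conjugation by $e^{r'\theta}$.

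On the $\Phi^{(\beta)*}$--$\phi^{[\beta]}$ entries this error is harmless. That contraction is a delta function on every term surviving the $u$-extraction, so only the difference of exponents matters; Lemma \ref{lem:integral2} depends only on $A+B$. Hence your exponent $i-j$ is correct.

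It is not harmless for the parity padding. Counting fermions only gives $r'=s'$. When $\ell(\mu)$ is odd and $\ell(\lambda)=\ell(\mu)+1$, the bra contains $\phi^{(\beta)*}_0$ and you must show its row vanishes. Its contraction with $\phi^{[\beta]}_m$ is not a delta function: by \eqref{phibeta-relation}, $\lvac\phi^{(\beta)*}_0\phi^{[\beta]}_1\rvac=\beta$. So here the absolute exponents matter.
\begin{itemize}
\item With the correct conjugations, Lemma \ref{Lem:phi-phi} reduces the row's entries to scalar multiples of $[w^{\lambda_j}](1+\beta w)^{r'-j}\prod_{k\le\lambda_j}(1-b_kw^{-1})^{-1}$. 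These vanish because $\lambda_j\ge r'+1-j$.
\item With your conjugations they do not vanish. For $b=c=0$, $\mu=(2)$, $\lambda=(2,1)$, a direct Wick computation gives $\pm2\beta$ instead of $0$.
\end{itemize}
The other mismatch, with $\ell(\lambda)$ odd, is fine: the padded column dies because $\lvac\Phi^{(\beta)*}_n\phi^{[\beta]}_0\rvac=0$ for $n>0$.

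\textbf{The support condition.} Your reason for $\lambda\subset\mu$ is false as stated. Since $\binom{i-j}{k}\neq0$ for $0\le k\le i-j$, an entry with $i\ge j$ vanishes only when $\mu_i-\lambda_j+(i-j)<0$, not merely when $\mu_i<\lambda_j$. For example, $\lambda=(2,1)$, $\mu=(3,1)$ gives $D_{21}=\beta$. The argument of Lemma \ref{lem:incl} uses strictness of both partitions. If $\lambda_a>\mu_a$, then for $i\ge a\ge j$ one has $\mu_i\le\mu_a-(i-a)$ and $\lambda_j\ge\lambda_a+(a-j)$. Hence $\mu_i-\lambda_j+(i-j)\le\mu_a-\lambda_a<0$. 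Rows $a,\dots,r$ therefore vanish on columns $1,\dots,a$, and $\det D=0$.

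\textbf{A minor slip.} Your one-variable extraction must carry $(1+\beta z^{-1})^{i-j}$, not $(1+\beta z)^{i-j}$, to produce $h_{\mu_i-\lambda_j+k}$ rather than $h_{\mu_i-\lambda_j-k}$.
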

\begin{proof}
 By means of  Lemma \ref{lem:x1x2}, Lemma \ref{lem:dual GP vs gq}  and  Theorem \ref{thm: gq VEV}, the coefficient $C_{\lambda,\mu}(b\,|\,c)$ can be expressed as
\begin{align*}
&C_{\lambda\mu}(b\,|\,c)= \langle GP_\mu(x\,|\,c),gq_\lambda(y\,|\,b)\rangle \\
&=\langle \langle \mathrm{vac} |e^{\mathcal{H}^{(\beta)}}
|\mu\rangle_{(GP,c)},\langle \mathrm{vac} |e^{\mathcal{H}^{[\beta]}}
|\lambda\rangle_{(gq,b)}\rangle\\
&= {}_{(GP,c)}\langle\mu|
\lambda\rangle_{(gq,b)}.
\end{align*}
Hence by using Theorem \ref{thm:b-c}, we can find that $\mu \supset \lambda$ with $\ell(\mu) = \ell(\lambda)$.
\end{proof}

The formula
\eqref{exp_eq_GP} generalizes the result of Ivanov
\cite[Theorem 10.2]{Iv}
for the factorial $Q$-functions.
\begin{cor}\label{cor:gq}
For any partition $\lambda $, the following expansion holds:
\begin{align}
gq_\lambda(x\,|\,b) &= \sum_{\mu: \mu \supset \lambda,\;
\ell(\lambda)=\ell(\mu)}
C_{\lambda\mu}^{gq}(b) gq_\mu(x), \label{eq_exp_eq_gq}\\
GP_\lambda(x\,|\,b) &= \sum_{\mu: \mu\subset \lambda,\;
\ell(\mu)=\ell(\lambda)}
C_{\lambda\mu}^{GP}(b) GP_\mu(x),\label{eq_exp_eq_GP}
\end{align}
where
\begin{align*}
C_{\lambda\mu}^{gq}(b)&=
d_{\lambda}(b)
\det\left(
\sum_{k=0}^{\infty}
\beta^k
\binom{i-j}{k}h_{\mu_i-\lambda_j+k}(b_1,\ldots,b_{\lambda_j})
\right)_{1\le i,j\le r},\\
C_{\lambda\mu}^{GP}(b)&=
d_\lambda(b)^{-1}
\det\left(
\sum_{k=0}^{\infty}
\beta^k
\binom{i-j}{k}e_{\lambda_i-\mu_j+k}(-b_1,\ldots,-b_{\lambda_i-1})
\right)_{1\le i,j\le r}.
\end{align*}
\end{cor}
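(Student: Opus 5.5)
The plan is to specialize Theorem~\ref{thm:gq(xb)} by setting the second family of parameters to zero, and to read off the two expansions with $c=0$. Since $gq_\mu(y\,|\,0)=gq_\mu(y)$ and $GP_\mu(x\,|\,0)=GP_\mu(x)$, the expansion \eqref{exp_eq_gq} with $c=0$ is exactly \eqref{eq_exp_eq_gq}. This gives
\[
C^{gq}_{\lambda\mu}(b)=c^{gq}_{\lambda\mu}(b\,|\,0).
\]
For \eqref{eq_exp_eq_GP}, I will use \eqref{exp_eq_GP} with the roles of the parameters swapped, namely first parameter $b$ and second parameter $0$. This expresses $GP_\lambda(x\,|\,b)$ in terms of $GP_\mu(x\,|\,0)$ and gives
\[
C^{GP}_{\lambda\mu}(b)=c^{GP}_{\lambda\mu}(b\,|\,0)=c^{gq}_{\mu\lambda}(0\,|\,b).
\]
The support conditions $\mu\supset\lambda$, $\ell(\mu)=\ell(\lambda)$ (respectively $\mu\subset\lambda$) are inherited directly from Theorem~\ref{thm:gq(xb)}.

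The next step is to evaluate the determinant formula at these specializations. In the $gq$ case with $c=0$, the supersymmetric function $h_m(b_1,\ldots,b_{\lambda_j};-c_1,\ldots,-c_{\mu_i-1})$ reduces to the ordinary $h_m(b_1,\ldots,b_{\lambda_j})$. Also $d_\mu(0)=1$, since $d$ is a product of factors $1+\beta b_k$, as in the normalization $\prod(1+\beta b_k)$ appearing in $\phi^{[\beta](k)}$. This yields the stated formula for $C^{gq}_{\lambda\mu}(b)$ immediately.

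In the $GP$ case we have first parameter $0$ and second parameter $b$, with the indices interchanged. The entries become $h_{\lambda_i-\mu_j+k}(\,;-b_1,\ldots,-b_{\lambda_i-1})$. By the convention for supersymmetric complete functions, $h_m(\,;y)$ with only the second set of variables equals $e_m(y)$, possibly up to a sign convention. So the entries become $e_{\lambda_i-\mu_j+k}(-b_1,\ldots,-b_{\lambda_i-1})$, and the prefactor becomes $d_\mu(0)/d_\lambda(b)=d_\lambda(b)^{-1}$.

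The main obstacle is bookkeeping of indices after swapping $\lambda\leftrightarrow\mu$. The determinant in Theorem~\ref{thm:gq(xb)} has entries indexed by $(\mu_i,\lambda_j)$ with binomial $\binom{i-j}{k}$. After the swap, the natural matrix is indexed by $(\lambda_i,\mu_j)$, and I must check that this is consistent with $\binom{i-j}{k}$ and not its transpose $\binom{j-i}{k}$. Since transposition preserves the determinant, I will transpose and relabel $i\leftrightarrow j$ to bring it to the stated form; the binomial then reads $\binom{i-j}{k}$ with the indices correctly placed. I will also verify the sign convention for $h_m(\,;-y)$ versus $e_m(-y)$ against the generating function $\prod(1+y_kz)/\prod(1-x_kz)$. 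If necessary I will absorb the resulting signs $(-1)^m$ into the variables, which is exactly why the formula is written with $-b_k$.
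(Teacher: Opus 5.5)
Your proposal is correct and matches the paper, which states this corollary as the $c=0$ case of Theorem~\ref{thm:gq(xb)}; for $GP$ this goes through $c^{GP}_{\lambda\mu}(b\,|\,0)=c^{gq}_{\mu\lambda}(0\,|\,b)$, together with $d_\mu(0)=1$. Both of your bookkeeping worries are moot: the substitution $(\lambda,\mu,b,c)\mapsto(\mu,\lambda,0,b)$ already gives entries indexed by $(\lambda_i,\mu_j)$ with $\binom{i-j}{k}$, so no transposition is needed, and with the convention $\sum_m h_m(x;y)t^m=\prod(1+y_kt)/\prod(1-x_kt)$ used in the paper (cf.\ the reduction \eqref{eq:Dbb}) one has exactly $h_m(\,;-b)=e_m(-b)$, with no extra sign.
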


\subsection{A formula on contour integral}

In this subsection, we consider contour integrals on the complex plane of the form
\[
\oint_{|x|<|y|<|z|}f(x,y,z)dy,
\]
which denote the line integral along a circle centered at $y=0$ with radius greater than $|x|$ but smaller than $|z|$.

\begin{lem}\label{lem:integral2}
Let $A,B\in \mathbb{Z}$, $m>0$ and $n,d\geq 0$.
Then we have
\begin{equation}\label{eq:rational_expression2}
\begin{aligned}
&[u^{m+d} w^n]
\left(
{(1+\beta w)^A(1+\beta u^{-1})^B}
\frac{\prod_{k=1}^{m-1}(1-uc_k)}{\prod_{k=1}^n(1-w^{-1}b_k)}
\frac{uw}{1-uw}
\right)
\\
&=
\sum_{l\geq 0}\binom{A+B}{l}\beta^lh_{l-n+m+d}(b_1,\ldots,b_n;-c_1,\ldots,-c_{m-1})
\end{aligned}
\end{equation}
where
$h_p(b_1,\dots,b_n;-c_1,\dots,-c_{m-1})=0$ if $p<0$ and
$
\binom{-n}{l}=(-1)^l\binom{n+l-1}{l}
$.
\end{lem}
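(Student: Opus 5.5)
We prove the identity by direct coefficient extraction. Each factor is expanded as a formal series in the natural direction: $(1+\beta w)^A$ in nonnegative powers of $w$, $(1+\beta u^{-1})^B$ in nonpositive powers of $u$, $1/\prod_{k}(1-w^{-1}b_k)$ in $w^{-1}$, and $uw/(1-uw)=\sum_{k\ge1}(uw)^k$. With these expansions the coefficient of $u^{m+d}w^n$ is a finite sum, since there is only one unbounded direction per variable once the monomial is fixed.

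The factors expand as
\[
(1+\beta w)^A=\sum_{a\ge0}\binom{A}{a}\beta^a w^a,\qquad (1+\beta u^{-1})^B=\sum_{a'\ge0}\binom{B}{a'}\beta^{a'}u^{-a'},
\]
and
\[
\prod_{k=1}^{m-1}(1-uc_k)=\sum_{j\ge0}e_j(-c_1,\ldots,-c_{m-1})u^j,\qquad \prod_{k=1}^n(1-w^{-1}b_k)^{-1}=\sum_{s\ge0}h_s(b_1,\ldots,b_n)w^{-s}.
\]
Matching exponents gives $k-a'+j=m+d$ in $u$ and $k+a-s=n$ in $w$. We solve for $k=s+n-a$ and $j=m+d-n+(a+a')-s$, and set $l=a+a'$. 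The coefficient then becomes
\[
\sum_{a,a'\ge0}\binom{A}{a}\binom{B}{a'}\beta^{l}\sum_{s\ge0}h_s(b)\,e_{m+d-n+l-s}(-c),
\]
subject to the side condition $k\ge1$. This expression is formally a sum over all $a,a',s$, with $k$ determined by them.

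The main point is to remove the constraint $k\ge1$. Suppose $k\le0$. Then $j=m+d-k+a'\ge m+d\ge m$, while $e_j$ of the $m-1$ variables $-c_1,\ldots,-c_{m-1}$ vanishes for $j\ge m$. So all terms with $k\le 0$ are already zero, and the sum may be taken over all $a,a',s\ge0$.

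Once the constraint is gone, the sum factors in two ways:
\begin{itemize}
\item Chu--Vandermonde, $\sum_{a+a'=l}\binom{A}{a}\binom{B}{a'}=\binom{A+B}{l}$, holds for arbitrary integers $A,B$ with the stated convention for negative upper index.
\item The convolution $\sum_{s}h_s(b)e_{p-s}(-c)$ is the coefficient of $t^p$ in $\prod_{k}(1-c_kt)/\prod_k(1-b_kt)$. By definition this is the supersymmetric $h_p(b_1,\ldots,b_n;-c_1,\ldots,-c_{m-1})$, which is $0$ for $p<0$.
\end{itemize}
Together these give exactly $\sum_{l\ge0}\binom{A+B}{l}\beta^l h_{l-n+m+d}(b;-c)$, as claimed.

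The only step needing care is the vanishing argument for $k\le0$. Everything else is bookkeeping of indices and finiteness, which holds because for fixed $l$ only finitely many $(a,a',s)$ contribute.
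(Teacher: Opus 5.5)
Your argument is correct, but it reaches the identity by a different route from the paper.

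The paper writes the coefficient as a double contour integral over $|b|,|c|<|w|<|u|^{-1}<|\beta|^{-1}$ and substitutes $t=u^{-1}$. It then evaluates the $t$-integral by the single residue at $t=w$. That one step collapses the geometric series $uw/(1-uw)$ and merges $(1+\beta t)^B$ with $(1+\beta w)^A$ into $(1+\beta w)^{A+B}$. What remains is a one-variable expansion of $(1+\beta w)^{A+B}w^d\prod_{k=1}^{m-1}(w-c_k)/\prod_{k=1}^n(w-b_k)$. There the hypotheses $m>0$ and $d\ge 0$ enter only because the $t$-integrand contains the polynomial factor $t^d\prod_{k=1}^{m-1}(t-c_k)$, and so has no pole at $t=0$.

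In your proof these two mechanisms are separated and made explicit. The vanishing of $e_j(-c_1,\ldots,-c_{m-1})$ for $j\ge m$, which disposes of the terms with $k\le 0$, is the formal counterpart of the missing pole at $t=0$. Chu--Vandermonde does the work of the substitution $t=w$. Your version is purely algebraic and avoids having to justify that contour integrals over numerical values of $b,c,\beta$ compute formal coefficients; the paper's version is shorter.

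One statement should be corrected. The coefficient of $u^{m+d}w^n$ is not a finite sum in general, because each of $u$ and $w$ has two unbounded directions. For example, take $A=0$, $B=-1$, $m=n=1$, $d=0$. The contributing terms are $a=0$, $a'=s$, $k=s+1$ for every $s\ge 0$, which gives $\sum_{s\ge 0}(-\beta b_1)^s$.

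What is true, and what you correctly note at the end, is that only finitely many $(a,a',s)$ contribute for each fixed $l=a+a'$. So the product and the coefficient extraction are well defined in $\mathbb{Z}[b,c][[\beta]]$, which is also where the right-hand side lives when $A+B<0$. With that justification in place of your opening claim, the proof is complete.
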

\begin{proof}
Let $I$ be the expression on the left-hand side in \eqref{eq:rational_expression2}.
Then we have
\[
\begin{aligned}
I&=
\frac{1}{(2\pi i)^2}\oint\!\!\!\oint_{|b|,|c|<|w|<|u|^{-1}<|\beta|^{-1}}
{(1+\beta w)^A(1+\beta u^{-1})^B}
\frac{\prod_{k=1}^{m-1}(1-uc_k)}{\prod_{k=1}^n(1-w^{-1}b_k)}
\frac{uw}{1-uw}\frac{dudw}{u^{m+d+1}w^{n+1}}\\
&\stackrel{t=u^{-1}}{=}
\frac{-1}{(2\pi i)^2}\oint\!\!\!\oint_{|b|,|c|<|w|<|t|<|\beta|^{-1}}
{(1+\beta w)^A(1+\beta t)^B}
\frac{\prod_{k=1}^{m-1}(1-t^{-1}c_k)}{\prod_{k=1}^n(1-w^{-1}b_k)}
\frac{w}{t-w}\frac{d(t^{-1})dw}{t^{-m-d-1}w^{n+1}}\\
&\stackrel{d(t^{-1})=-t^{-2}dt}{=}
\frac{1}{(2\pi i)^2}\oint\!\!\!\oint_{|b|,|c|<|w|<|t|<|\beta|^{-1}}
{(1+\beta w)^A(1+\beta t)^Bt^d}
\frac{\prod_{k=1}^{m-1}(t-c_k)}{\prod_{k=1}^n(w-b_k)}
\frac{dtdw}{t-w}.
\end{aligned}
\]
Applying the Residue theorem for the contour integral $\oint dt$, we obtain
\[
I=
\frac{1}{2\pi i}\oint_{|b|,|c|<|w|<|\beta|^{-1}}
(1+\beta w)^{A+B}w^d
\frac{\prod_{k=1}^{m-1}(w-c_k)}{\prod_{k=1}^n(w-b_k)}
dw.
\]
Expanding the integrand over the domain $\{|b|,|c|<|w|<|\beta|^{-1}\}$, we obtain
\[
\begin{aligned}
I&=
\frac{1}{2\pi i}\oint_{|b|,|c|<|w|<|\beta|^{-1}}
\left[(1+\beta w)^{A+B}w^d\left\{\sum_{i=0}^\infty (-1)^ih_i(b_1,\ldots,b_n;-c_1,\ldots,-c_{m-1})w^{-n+m-i-1}
\right\}
\right]dw\\
&=\sum_{l\geq 0}\binom{A+B}{l}\beta^lh_{l-n+m+d}(b_1,\ldots,b_n;-c_1,\ldots,-c_{m-1}).
\end{aligned}
\]
\end{proof}

 \subsection{Proof on General expansion formulas}


\begin{lem}\label{Lem:phi-Phi}
For $n>0$ and  $m\in \mathbb{Z}$, we have
\begin{align*}
&\lvac e^{-\theta^*}\phi_{m}^{[\beta]*}\Phi^{(\beta)}_{n}e^{\theta^*}\rvac=\delta_{n,m}.
\end{align*}

\end{lem}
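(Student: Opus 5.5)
Our plan is to reduce the statement to the anticommutation relation \eqref{Phi-phi} together with the annihilation rules \eqref{phibet-vacu}, \eqref{Phibet-vacu}, after removing the exponentials $e^{\pm\theta^*}$. The obstacle is that $\theta^*$ does not commute with the fermions. We handle this by conjugating. Applying the anti-automorphism $*$ to \eqref{eq:phi-theta} of Lemma \ref{Lem:beta-theta}, and noting $\theta^{*}$ is the $*$-image of $\theta$ (since $b_n^*=b_{-n}$), we get
\begin{equation*}
e^{-\theta^*}\phi^{[\beta]*}(z)e^{\theta^*}=(1+\beta z)\phi^{[\beta]*}(z).
\end{equation*}
We then use $\lvac e^{-\theta^*}=\lvac$, which follows from $\lvac e^{\theta^*}=\lvac$. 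Inserting $e^{\theta^*}e^{-\theta^*}$, the left-hand side becomes
\begin{equation*}
\lvac \big(e^{-\theta^*}\phi_m^{[\beta]*}e^{\theta^*}\big)\big(e^{-\theta^*}\Phi^{(\beta)}_n e^{\theta^*}\big)\rvac .
\end{equation*}

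For the second factor, note that $\Phi^{(\beta)}(z)=(2+\beta z^{-1})^{-1}\phi^{(\beta)}(z)$, so \eqref{eq:phi-theta-star} gives
\begin{equation*}
e^{\theta^*}\Phi^{(\beta)}(z)e^{-\theta^*}=(1+\beta z^{-1})\Phi^{(\beta)}(z),
\end{equation*}
and hence $e^{-\theta^*}\Phi^{(\beta)}(z)e^{\theta^*}=(1+\beta z^{-1})^{-1}\Phi^{(\beta)}(z)$. Extracting coefficients, $e^{-\theta^*}\Phi^{(\beta)}_n e^{\theta^*}=\sum_{k\ge0}(-\beta)^k\Phi^{(\beta)}_{n+k}$. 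Similarly, $e^{-\theta^*}\phi^{[\beta]*}_m e^{\theta^*}=\phi^{[\beta]*}_m+\beta\phi^{[\beta]*}_{m'}$, where $m'$ is $m\pm1$ depending on the indexing convention of $\phi^{[\beta]*}(z)$. This index must be checked against \eqref{phibeta-relation}.

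Each resulting term has the form $\lvac \phi^{[\beta]*}_a\Phi^{(\beta)}_{b}\rvac$ with $b\ge n>0$. We then use
\begin{equation*}
\phi^{[\beta]*}_a\Phi^{(\beta)}_b=[\phi^{[\beta]*}_a,\Phi^{(\beta)}_b]_+-\Phi^{(\beta)}_b\phi^{[\beta]*}_a .
\end{equation*}
The second term vanishes because $\lvac\Phi^{(\beta)}_b=0$ for $b>0$ by \eqref{Phibet-vacu}. The first term equals $\delta_{a,b}$, since it is the $*$-image of \eqref{Phi-phi}, namely $[\Phi^{(\beta)*}_b,\phi^{[\beta]}_a]_+=\delta_{a,b}$. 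Therefore the full expression is a finite sum of the form
\begin{equation*}
\sum_{k\ge 0}(-\beta)^k\big(\delta_{m,n+k}+\beta\,\delta_{m',n+k}\big).
\end{equation*}

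The main obstacle is making this telescope to $\delta_{m,n}$. This depends on the shift $m'$ being exactly $m-1$, so that the factors $(1+\beta z)$ and $(1+\beta z^{-1})^{-1}$ cancel when paired through $\delta_{a,b}$. It is also possible that the intended route is instead to move $e^{\theta^*}$ rightward and use $\lvac e^{\theta^*}=\lvac$ together with the field identities.

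The safest way to settle the sign and shift conventions is the generating-function form. We would compute
\begin{equation*}
\lvac e^{-\theta^*}\phi^{[\beta]*}(w)\Phi^{(\beta)}(z)e^{\theta^*}\rvac
\end{equation*}
as $(1+\beta w^{\pm1})(1+\beta z^{-1})^{-1}$ times the contraction $\lvac\phi^{[\beta]*}(w)\Phi^{(\beta)}(z)\rvac$. This contraction can be read off from \eqref{Phi-phi} and the vacuum rules as $\sum_{n>0}w^{n}z^{n}$, possibly with an inversion of $w$ depending on conventions. We would then verify that the prefactors cancel on the relevant diagonal coefficients for $n>0$. If a naive cancellation fails, we would instead compute $e^{-\theta^*}$ acting on $\Phi^{(\beta)}_n e^{\theta^*}\rvac$ directly. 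For this we would use $e^{\theta^*}\rvac$ expanded via the Heisenberg relation $[b_{-k},\phi_n]=\phi_{n+k}$, and then cross-check the result against the case $\beta=0$, where the statement reduces to $\tfrac12\lvac\phi_m^*\phi_n\rvac=\delta_{mn}$.
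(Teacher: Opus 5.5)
Your argument is correct, and the step you call the main obstacle is already settled by what you wrote. Since $\phi^{[\beta]*}_m$ denotes $(\phi^{[\beta]}_m)^*$ and $*$ acts coefficientwise, take the coefficient of $z^m$ in your identity $e^{-\theta^*}\phi^{[\beta]*}(z)e^{\theta^*}=(1+\beta z)\phi^{[\beta]*}(z)$. This gives $e^{-\theta^*}\phi^{[\beta]*}_m e^{\theta^*}=\phi^{[\beta]*}_m+\beta\phi^{[\beta]*}_{m-1}$. So $m'=m-1$, and no convention remains to be chosen; the paper uses the same, e.g.\ $\phi^{[\beta]*}_0+\beta\phi^{[\beta]*}_{-1}$.

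Your sum $\sum_{k\ge0}(-\beta)^k(\delta_{m,n+k}+\beta\,\delta_{m-1,n+k})$ then evaluates as follows. It is $0$ for $m<n$ and $1$ for $m=n$. For $m>n$ it is $(-\beta)^{m-n}+\beta(-\beta)^{m-n-1}=0$. Your signs $(-\beta)^k$ in $e^{-\theta^*}\Phi^{(\beta)}_n e^{\theta^*}$ are the correct ones. The paper's display drops them, which is harmless there because every term vanishes.

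The paper's route differs from yours. It carries out your conjugation computation only for $m=0$, where nothing needs to telescope: the $\phi^{[\beta]*}$-indices $0,-1$ never match the $\Phi^{(\beta)}$-indices $\ge n>0$. For $m\neq0$ it avoids conjugation entirely. It uses $\lvac e^{-\theta^*}=\lvac$, which you note but never exploit, to delete the left exponential, and then anticommutes:
\[
\lvac \phi^{[\beta]*}_m\Phi^{(\beta)}_n e^{\theta^*}\rvac=\delta_{mn}\lvac e^{\theta^*}\rvac-\lvac \Phi^{(\beta)}_n\phi^{[\beta]*}_m e^{\theta^*}\rvac=\delta_{mn}.
\]
This uses $\lvac e^{\theta^*}\rvac=1$ and $\lvac\Phi^{(\beta)}_n=0$ for $n>0$. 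Dropping the paper's auxiliary restriction to $m>0$, this two-line argument works for every $m\in\Z$, including $m=0$. It needs nothing about how $\theta^*$ acts on the fermions, i.e.\ no Lemma \ref{Lem:beta-theta}, so it is the more economical proof.

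Your uniform conjugation is longer and relies on Lemma \ref{Lem:beta-theta} plus the cancellation above. In exchange, it produces the conjugated operators explicitly and shows how the factors $(1+\beta w)$ and $(1+\beta z^{-1})^{-1}$ cancel against the contraction. That is the kind of manipulation the paper needs again in the proof of Proposition \ref{thm:b-c}.
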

\begin{proof}
In fact for $n>0$, it follows from \eqref{phibet-vacu}, \eqref{Phibet-vacu}, \eqref{Phi-phi} and Lemma \ref{Lem:beta-theta} that
\begin{align*}
&\lvac e^{-\theta^*}\phi_{0}^{[\beta]*}\Phi^{(\beta)}_{n}e^{\theta^*}\rvac=\lvac e^{-\theta^*} \phi_{0}^{[\beta]*}e^{\theta^*}e^{-\theta^*}\Phi^{(\beta)}_{n}e^{\theta^*}\rvac\\
&= \lvac( \phi_{0}^{[\beta]*}+\beta\phi_{-1}^{[\beta]*})(\Phi^{(\beta)}_{n}+\beta\Phi^{(\beta)}_{n+1}+\beta^2\Phi^{(\beta)}_{n+2}+\cdots)\rvac=0.
\end{align*}
If we impose $m\neq 0$, then
 \begin{align*}
&\lvac e^{-\theta^*}\phi_{m}^{[\beta]*}\Phi^{(\beta)}_{n}e^{\theta^*}\rvac=\lvac\phi_{m}^{[\beta]*}\Phi^{(\beta)}_{n}e^{\theta^*}\rvac\delta_{m>0}\\
&= \delta_{n,m}\delta_{m>0}-\lvac \Phi^{(\beta)}_{n}\phi_{m}^{[\beta]*}e^{\theta^*}\rvac\delta_{m>0}=\delta_{n,m}\delta_{m>0}.
\end{align*}
\end{proof}
\begin{lem}\label{Lem:phi-phi}
For $m\in \mathbb{Z}$, we have
\begin{align*}
&\lvac e^{-\theta^*}\phi_{m}^{[\beta]*}\phi^{(\beta)}_{0}e^{\theta^*}\rvac=\delta_{m,0}.
\end{align*}
\end{lem}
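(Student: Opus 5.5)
The plan is to follow the template of Lemma \ref{Lem:phi-Phi}. We conjugate by $e^{\pm\theta^*}$ to move the exponentials onto the vacua, and then use the annihilation rules \eqref{phibet-vacu} together with the anticommutation relation \eqref{phibeta-relation} between $\phi^{[\beta]*}$ and $\phi^{(\beta)}$. We first insert $e^{\theta^*}e^{-\theta^*}$ and write
\[
\lvac e^{-\theta^*}\phi_{m}^{[\beta]*}\phi^{(\beta)}_{0}e^{\theta^*}\rvac
=\lvac \bigl(e^{-\theta^*}\phi_{m}^{[\beta]*}e^{\theta^*}\bigr)\bigl(e^{-\theta^*}\phi^{(\beta)}_{0}e^{\theta^*}\bigr)\rvac .
\]
Next we compute each conjugated factor. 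By \eqref{eq:phi-theta-star}, conjugating by $e^{-\theta^*}$ multiplies the field $\phi^{(\beta)}(z)$ by $(1+\beta z^{-1})^{-1}$. Hence $e^{-\theta^*}\phi^{(\beta)}_0e^{\theta^*}=\sum_{k\ge0}(-\beta)^k\phi^{(\beta)}_{k}$. Applying $*$ to \eqref{eq:phi-theta}, and noting that $\theta^*$ is $\theta$ with $b_n$ replaced by $b_{-n}$, conjugation acts on $\phi^{[\beta]*}(z)$ by a factor $(1+\beta z)^{\pm1}$ in the appropriate variable. As in the proof of Lemma \ref{Lem:phi-Phi}, this yields a two-term combination of the form $\phi^{[\beta]*}_m+\beta\phi^{[\beta]*}_{m-1}$, up to the exact index shift fixed by the convention for $*$.

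We then split into cases. For $m\neq 0$, we argue exactly as in the second part of Lemma \ref{Lem:phi-Phi}. When $m<0$, the dual vacuum kills the leftmost factor, since $\lvac\phi^{[\beta]*}_m=0$ in that range by \eqref{phibet-vacu} and the definition of $*$. When $m>0$, we anticommute $\phi^{[\beta]*}_m$ past $\phi^{(\beta)}_k$ using \eqref{phibeta-relation}; this produces scalars $2\delta+\beta\delta$. The remaining term has $\phi^{[\beta]*}_m$ to the right, where it annihilates $e^{\theta^*}\rvac$ after moving it past $e^{\theta^*}$, using $e^{\theta^*}$-invariance of the vacuum from the appropriate side. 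We then check that the scalar contributions telescope: the factor $\sum(-\beta)^k$ from the $\phi^{(\beta)}$ side should exactly cancel the $2\delta_{m,n}+\beta\delta_{m,n-1}$ structure, leaving $0$.

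For $m=0$ we compute directly with \eqref{eq:phiphi} in deformed form. Only the term $\lvac\phi_0^{[\beta]*}\phi_0^{(\beta)}\rvac$ survives, together with possible $\beta$-corrections from $\phi^{[\beta]*}_{-1}$ paired with $\phi^{(\beta)}_1$. We evaluate $\lvac\phi^{[\beta]*}_0\phi^{(\beta)}_0\rvac=\lvac\phi_0\phi_0\rvac=1$ from the expansions at $z^0$, since the deformations only alter higher modes, and then check that the correction terms cancel.

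The main obstacle is the bookkeeping of these $\beta$-corrections, both in the $m=0$ case and in the telescoping for $m>0$. Since the Lemma \ref{Lem:phi-Phi} argument relied on $n>0$ and on $\Phi$ rather than $\phi$, the relation \eqref{phibeta-relation}, with its extra $\beta\delta_{m,n-1}$ term, must be combined carefully with the geometric series. A cleaner route that I would try first is to work at the level of generating fields. We compute $\lvac\phi^{[\beta]*}(w)\phi^{(\beta)}(z)\rvac$ as a rational function, in analogy with \eqref{Phivalue} and \eqref{phivalue}, multiply by the conjugation factors, and extract the coefficient of $w^m z^0$. This reduces the statement to verifying that a single explicit rational function has constant $z$-part equal to $1$.
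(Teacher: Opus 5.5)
Your overall strategy is the paper's: conjugate by $e^{\pm\theta^*}$, then use the annihilation rules and the anticommutator $[\phi^{[\beta]*}_a,\phi^{(\beta)}_k]_+=2\delta_{a,k}+\beta\delta_{k,a-1}$. Your cases $m<0$, $m=0$ and $m\ge 2$ go through. However, the uniform treatment of $m>0$ has a genuine gap at $m=1$.

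After conjugation the left factor is $\phi^{[\beta]*}_m+\beta\phi^{[\beta]*}_{m-1}$, and $\phi^{[\beta]*}_n\rvac=0$ holds only for $n>0$. For $m=1$ the piece $\beta\phi^{[\beta]*}_0$ does not kill the vacuum, so your ``remaining term'' is
\[
-\sum_{k\ge0}(-\beta)^k\lvac\phi^{(\beta)}_k\bigl(\phi^{[\beta]*}_1+\beta\phi^{[\beta]*}_0\bigr)\rvac=-\beta\lvac\phi^{(\beta)}_0\phi^{[\beta]*}_0\rvac=-\beta,
\]
not $0$. Correspondingly, the scalar terms do not telescope to $0$ at $m=1$. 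For $m\ge2$ they are $2(-\beta)^m+\beta(-\beta)^{m-1}+2\beta(-\beta)^{m-1}+\beta^2(-\beta)^{m-2}=0$. For $m=1$ the last term is absent, since it would need $k=-1$, and the sum is $-2\beta+\beta+2\beta=\beta$. The correct value $\beta-\beta=0$ needs the extra input $\lvac\phi^{(\beta)}_0\phi^{[\beta]*}_0\rvac=1$, which is exactly why the paper treats $m=1$ separately.

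Relatedly, there is no ``$e^{\theta^*}$-invariance'' of $\rvac$. One only has $\lvac e^{\theta^*}=\lvac$ and $e^{\theta}\rvac=\rvac$. Pushing $\phi^{[\beta]*}_m$ through $e^{\theta^*}$ is precisely what produces the $\beta\phi^{[\beta]*}_{m-1}$ term that survives at $m=1$.

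Your generating-function alternative does work and avoids the case split. You would have to derive the two-point function yourself, since it is neither \eqref{Phivalue} nor \eqref{phivalue}. From the anticommutator and the vacuum rules, $\lvac\phi^{[\beta]*}_a\phi^{(\beta)}_k\rvac$ equals $1$ for $a=k=0$, equals $2\delta_{a,k}+\beta\delta_{k,a-1}$ for $a>0$, and vanishes otherwise. Hence
\[
\lvac\phi^{[\beta]*}(w)\phi^{(\beta)}(z)\rvac=1+(2+\beta z^{-1})\frac{wz}{1-wz}.
\]
Multiply by $(1+\beta w)(1+\beta z^{-1})^{-1}$ and use $(2+\beta z^{-1})(1+\beta z^{-1})^{-1}=1+(1+\beta z^{-1})^{-1}$. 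The coefficient of $z^0$ is then $(1+\beta w)\sum_{n\ge0}(-\beta w)^n=1$, giving $\delta_{m,0}$ for all $m$ at once. This is a cleaner, uniform proof than the paper's four-case check. Note that it relies on the same value $\lvac\phi^{[\beta]*}_0\phi^{(\beta)}_0\rvac=1$ that your case analysis omitted at $m=1$.
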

\begin{proof}
For $m<0$, the lemma can obviously be derived.
If $m>1$, by using \eqref{phibet-vacu},
\eqref{phibeta-relation} and Lemma \ref{Lem:beta-theta}, we can obtain
\begin{align*}
&\lvac e^{-\theta^*}\phi_{m}^{[\beta]*}\phi^{(\beta)}_{0}e^{\theta^*}\rvac=-\lvac \phi^{(\beta)}_{0}\phi_{m}^{[\beta]*}e^{\theta^*}\rvac=-\lvac\phi^{(\beta)}_{0}e^{\theta^*}(\phi_{m}^{[\beta]*}+\beta\phi_{m-1}^{[\beta]*})\rvac=0.
\end{align*}
If $m=0$,
\begin{align*}
&\lvac e^{-\theta^*}\phi_{0}^{[\beta]*}\phi^{(\beta)}_{0}e^{\theta^*}\rvac\\
&=\lvac (\phi_{0}^{[\beta]*}+\beta\phi_{-1}^{[\beta]*})(\phi^{(\beta)}_{0}+\beta\phi^{(\beta)}_{1}+\beta^2\phi^{(\beta)}_{2}+\cdots)\rvac\\
&=\lvac\phi_{0}^{[\beta]*}\phi^{(\beta)}_{0}\rvac=\lvac\phi_{0}^{*}\phi_{0}\rvac=1.
\end{align*}
If $m=1$, it follows from \eqref{phibet-vacu}, \eqref{phibeta-relation} that
\begin{align*}
&\lvac e^{-\theta^*}\phi_{1}^{[\beta]*}\phi^{(\beta)}_{0}e^{\theta^*}\rvac=\beta-\lvac e^{-\theta^*}\phi^{(\beta)}_{0}\phi_{1}^{[\beta]*}e^{\theta^*}\rvac\\
&=\beta-\lvac (\phi^{(\beta)}_{0}+\beta\phi^{(\beta)}_{1}+\beta^2\phi^{(\beta)}_{2}+\cdots)(\phi_{1}^{[\beta]*}+\beta\phi_{0}^{[\beta]*})\rvac\\
&=\beta-\beta\lvac\phi^{(\beta)}_{0}\phi_{0}^{[\beta]*}\rvac=0.
\end{align*}
\end{proof}

Let $\lambda\in\SP.$ Define
\begin{equation}\label{eq_d_lam}
d_\lambda(b):=
\prod_{i=1}^{\ell(\lambda)}\prod\limits_{l=1}^{\lambda_i-1}(1+\beta b_l).
\end{equation}
 \begin{prop}\label{thm:b-c}
 Let $\lambda,\mu$ be strict partitions.
 Let $c=(c_1,c_2,\ldots)$ a parameter sequence.
Then ${}_{(GP,c)}\langle\mu|
\lambda\rangle_{(gq,b)}$ is non-zero only if $\lambda,\mu$
 have the same length, say  $r$.
 If this holds we have
 \begin{align}
 \label{eq:pair c b}{}_{(GP,c)}\langle\mu|
\lambda\rangle_{(gq,b)}=
d_{\lambda}(b)d_{\mu}(c)^{-1}\det({D}),
\end{align}
where ${D}$ is an $r\times r$ matrix defined by
\begin{equation}
D_{ij}=\sum_{k=0}^\infty
\beta^k \binom{i-j}{k}
h_{\mu_i-\lambda_j+k}(b_1,\ldots,b_{\lambda_j};-c_1,\ldots,-c_{\mu_i-1}),
\label{eq:D in h}
\end{equation}
where
$\binom{-n}{k}$ denotes the generalized binomial
 coefficients given by
$(-1)^k\binom{n+k-1}{k}$ for $n>0$ and $k\ge 0$,
\end{prop}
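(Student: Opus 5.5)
The plan is to compute the pairing ${}_{(GP,c)}\langle\mu|\lambda\rangle_{(gq,b)}$ as a single vacuum expectation value and evaluate it by Wick's theorem. Applying the anti-automorphism $*$ to $|\mu\rangle_{(GP,c)}$ gives
\[
{}_{(GP,c)}\langle\mu| = \lvac e^{\theta}\Phi^{(\beta)(\mu_{s'})*}_{\mu_{s'}}\cdots e^{\theta}\Phi^{(\beta)(\mu_1)*}_{\mu_1},
\]
where $s=\ell(\mu)$ and $s'$ is its even rounding. The pairing is then the VEV of a word in which the $\Phi^*$-letters (carrying $c$) sit to the left of the $\phi^{[\beta]}$-letters (carrying $b$), with factors $e^{\pm\theta}$ interleaved.

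First I pass to generating fields. Using \eqref{Phi-betak} and the definition of $\phi^{[\beta](k)}$, each letter is a coefficient extraction: $[u^{m}]$ of $\prod_{k<\mu_i}(1-uc_k)(1+\beta c_k)^{-1}$ times the undeformed $\Phi^{(\beta)*}$-field, and $[w^{n}]$ of $\prod_{j\le\lambda_j}(1-w^{-1}b_j)^{-1}\prod_{k<\lambda_j}(1+\beta b_k)$ times $\phi^{[\beta]}(w)$. The constants $(1+\beta b_k)$ and $(1+\beta c_k)^{-1}$ multiply to $d_\lambda(b)d_\mu(c)^{-1}$, which is exactly the prefactor in \eqref{eq:pair c b}.

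Next I commute all the $e^{\pm\theta}$ to the vacua, where they are absorbed because $e^{\theta}\rvac=\rvac$ and $\lvac e^{\theta^*}=\lvac$. Lemma \ref{Lem:beta-theta} and its $*$-image show that each passage of $e^{\pm\theta}$ through a field multiplies it by a power of $(1+\beta w)$ or $(1+\beta u^{-1})$. Once everything is moved out, the $i$-th $GP$ field carries $(1+\beta u_i^{-1})^{B_i}$ and the $j$-th $gq$ field carries $(1+\beta w_j)^{A_j}$. The exponents depend only on the positions, and I expect them to combine so that $A_j+B_i=i-j$.

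With the exponentials gone, I apply Wick's theorem. The two-point functions split into three kinds.
\begin{itemize}
\item $GP$--$GP$ and $gq$--$gq$ contractions: these should contribute nothing once paired with the remaining factors. The mechanism is the vacuum annihilation rules \eqref{phibet-vacu}, \eqref{Phibet-vacu} together with the triangularity built into Lemmas \ref{Lem:phi-Phi} and \ref{Lem:phi-phi}; compare how Lemma \ref{lem:dual GP vs gq} is forced to be $\delta_{\mu\lambda}$.
\item Mixed contractions $\lvac\Phi^{(\beta)*}(u)\phi^{[\beta]}(w)\rvac$: by \eqref{Phi-phi} these equal $uw/(1-uw)$.
\item Contractions involving a padding zero part: by Lemma \ref{Lem:phi-phi} the extra $\phi_0$-letter pairs only with another $\phi_0$-letter, with value $1$.
\end{itemize}
The Pfaffian therefore collapses to a determinant of mixed contractions. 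It is nonzero only when both sides have the same number of nonzero parts, which gives $\ell(\lambda)=\ell(\mu)=r$; the zero letters either match each other or kill the term. Each determinant entry is then
\[
[u^{\mu_i}w^{\lambda_j}]\,(1+\beta w)^{A}(1+\beta u^{-1})^{B}\frac{\prod_{k<\mu_i}(1-uc_k)}{\prod_{k\le\lambda_j}(1-w^{-1}b_k)}\frac{uw}{1-uw},
\]
and Lemma \ref{lem:integral2} with $A+B=i-j$, $m=\mu_i$, $n=\lambda_j$, $d=0$ turns this into $D_{ij}$ as in \eqref{eq:D in h}.

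The main obstacle is the bookkeeping of the interleaved $e^{\pm\theta}$ factors. I must show that the accumulated exponents really depend only on $i-j$; in particular the $e^{\theta}$ coming from the $GP$ side acts on the $\phi^{[\beta]}$-fields through the conjugated form of \eqref{eq:phi-theta-star}, and this must be checked. I also need to confirm that the resulting Pfaffian sign equals the determinant sign for the ordering $\Phi^*_{\mu_r}\cdots\Phi^*_{\mu_1}\phi_{\lambda_1}\cdots\phi_{\lambda_r}$. I would first verify both points for $r=1,2$ by direct expansion and then argue inductively on the position of each exponential.
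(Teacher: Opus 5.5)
Your outline follows the paper's proof. You write the pairing as a coefficient of one vacuum expectation value, push every $e^{\pm\theta}$ onto $\rvac$ with Lemma~\ref{Lem:beta-theta}, apply Wick's theorem, reduce the Pfaffian to a determinant, and evaluate the entries with Lemma~\ref{lem:integral2}.

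The bookkeeping you call the main obstacle is routine. The $i$-th $GP$ field is conjugated by $e^{(r'+1-i)\theta}$ and the $j$-th $gq$ field by $e^{(r'+1-j)\theta}$. Use \eqref{eq:phi-theta} for $\phi^{[\beta]}$ and the $*$-image of \eqref{eq:phi-theta-star} for $\Phi^{(\beta)*}$; this gives $A+B=i-j$. The sign in $\mathrm{Pf}\left(\begin{smallmatrix}0&B\\-B^{T}&C\end{smallmatrix}\right)=(-1)^{r'(r'-1)/2}\det B$ cancels against reversing the row order $u_{r'},\dots,u_1$.

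\textbf{The genuine gap is the collapse of the Pfaffian.} Lemmas~\ref{Lem:phi-Phi} and~\ref{Lem:phi-phi} concern mixed pairs and say nothing about same-type contractions. Moreover, $gq$--$gq$ contractions do \emph{not} vanish in general. For $\mu=(\mu_1)$ and $\lambda=(1)$ padded to $(1,0)$, the $(w_1,w_2)$ entry is $[w_1^1](1+\beta w_1)^2(1-w_1^{-1}b_1)^{-1}=2\beta+\beta^2b_1$.

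The missing idea, which the paper uses, is a degree count in $u$. The $u_i$-prefactor is $\prod_{k<\mu_i}(1-u_ic_k)$ times a series in $u_i^{-1}$, so its top degree is $\mu_i-1$. By \eqref{Phibet-vacu}, a $GP$--$GP$ contraction has only nonpositive powers of its right-hand variable, and that variable is never the padding letter, which is leftmost. Hence $[u^\mu]$ kills the whole $GP$--$GP$ block. One vanishing diagonal block already gives $\mathrm{Pf}=\pm\det B$, whatever the $gq$--$gq$ block is.

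The same count justifies your mixed contraction. \eqref{Phi-phi} only controls $\Phi^{(\beta)*}_n$ with $n>0$. The other modes do contribute to $\lvac\Phi^{(\beta)*}(u)\phi^{[\beta]}(w)\rvac$; for example $\lvac\Phi^{(\beta)*}_0\phi^{[\beta]}_0\rvac=\tfrac12$, since $\Phi^{(\beta)*}_0=\tfrac12\phi_0$. They carry only $u^{\le 0}$, so they drop out, and replacing the contraction by $uw/(1-uw)$ is legitimate only for this reason.

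\textbf{Your length and padding claims need the same kind of argument.} Once the $GP$--$GP$ block is zero, $r'>s'$ gives $0$ immediately. But $r'<s'$ forces some $gq$ letters to contract with each other, and you cannot appeal to a vanishing $gq$--$gq$ block. Instead, take the smallest unmatched index $a$. It satisfies $a\le r'+1$, because $\{r'+2,\dots,s'\}$ cannot hold all $s'-r'$ unmatched letters. Then $(1+\beta w_a)^{r'+1-a}$ has degree $<\lambda_a$, while the contraction has only nonpositive powers of $w_a$, so every such term dies.

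Also, the $\mu$-padding letter does not pair only with the $\lambda$-padding letter: for $\mu=(\mu_1)$, $\lambda=(1)$, its entry against $w_1$ is $\beta$. What is true, and sufficient, is the following:
\begin{itemize}
\item The $\lambda$-padding column vanishes in every row with $\mu_i>0$, by the degree count again.
\item That column equals $1$ in the $\mu$-padding row, by Lemma~\ref{Lem:phi-phi}.
\item When only $\mu$ is padded, its row entries $[w_j^{\lambda_j}](1+\beta w_j)^{r'-j}\prod_{k\le\lambda_j}(1-w_j^{-1}b_k)^{-1}$ vanish, since $\lambda_j>r'-j$.
\end{itemize}
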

\begin{proof}
First we assume that the length $r$ of the strictly partition $\mu$  is even. It follows from the definition of $|\mu\rangle_{(GP,b)}$ that
 \begin{align*}
{}_{(GP,c)}\langle\mu|=(|\mu\rangle_{(GP,c)})^*=\lvac e^\theta\Phi_{\mu_{r}}^{(\beta)*(\mu_{s})}\cdots e^\theta\Phi_{\mu_1}^{(\beta)*(\mu_1)}
\end{align*}
 where we simply
denote  $\Phi_{\mu_i}^{(\beta)*(\mu_i)}$ the
equivariantly deformed
Fermion with equivariant parameter $c=(c_1,c_2,\ldots).$

Let us denote $
u^\mu w^\lambda=
u_1^{\mu_1}\cdots u_r^{\mu_r}
w_1^{\lambda_1}
\cdots w_{s'}^{\lambda_{s'}}.
$
We have
  \begin{align}
&{}_{(GP,c)}\langle\mu|
\lambda\rangle_{(gq,b)}\nonumber\\
&=[u^{\mu}
w^{\lambda}]
\lvac e^\theta\Phi^{(\beta)*(\mu_{r})}(u_r)\cdots e^\theta\Phi^{(\beta)*(\mu_1)}(u_1)
{\phi}^{[\beta](\lambda_j)}(w_1)e^{-\theta}\cdots{\phi}^{[\beta](\lambda_{s'})}(w_{s'})e^{-\theta}\rvac\nonumber
\\
&=[u^{\mu}
w^{\lambda}]\lvac \prod_{1\le i\le r}^{\leftarrow}
e^{(r+1-i)\theta}\Phi^{(\beta)*(\mu_{i})}(u_i)
e^{-(r+1-i)\theta}
\prod_{1\le j\le s'}^{\rightarrow}
e^{(s'+1-j)\theta}{\phi}^{[\beta](\lambda_j)}(w_j)e^{-(s'+1-j)\theta}\rvac\nonumber\\
&=[u^\mu w^\lambda]
\frac{\prod\limits_{j=1}^{s'}\big(1+\beta w_j\big)^{s'+1-j}}{\prod\limits_{i=1}^{r}\big(1+\beta u_i^{-1}\big)^{r+1-i}}
\lvac \prod_{1\le i\le r}^{\leftarrow}\Phi^{(\beta)*(\mu_{i})}(u_i)
\prod_{1\le j\le s'}^{\rightarrow}
{\phi}^{[\beta](\lambda_j)}(w_j)\rvac,
\label{eq:coef_c_b}
\end{align}
where we used Lemma \ref{Lem:beta-theta}
in the last equality.
By Wick's theorem, the vacuum expectation value is zero unless
\( s' = r\). So now we set $s'=r.$ Then the vacuum expectation values in \eqref{eq:coef_c_b} is written as
\begin{align*}
\frac{d_{\lambda}(b)}{d_{\mu}(c)}[u^{\mu}w^{\lambda}]
\prod_{i=1}^{r}\frac{\prod_{k=1}^{\mu_i-1}(1-u_i c_k)}{\prod_{k=1}^{\lambda_j
}(1-w_j^{-1}b_k)}
\lvac \prod_{1\le i\le r}^{\leftarrow}\Phi^{(\beta)*(\mu_{i})}(u_i)
\prod_{1\le j\le s'}^{\rightarrow}
{\phi}^{[\beta](\lambda_j)}(w_j)\rvac.
\end{align*}
Since
the rational function in this expression is a product form such that all variables are separated,
the function \eqref{eq:coef_c_b} is, up to scaler $d_{\lambda}(b)d_{\mu}(c)^{-1}$, equal to the Pfaffian of a $2r\times 2r$ skew-symmetric matrix such that each entry
involving only a pair of variables in $u_r,\ldots,u_1,w_1,\ldots,w_r$.

Consider the factor involving $u_i$ and $u_j:$
\begin{equation}
[u_i^{\mu_i}u_j^{\mu_j}]f_i(u_i)f_j(u_j)\lvac\Phi^{(\beta)*}(u_{i})\Phi^{(\beta)*}(u_{j})\rvac
\quad\text{for $1\le i,j\le r$,}
\label{eq:uiuj}
\end{equation}
where
\begin{equation*}
f_i(u)=\frac{\prod_{k=1}^{\mu_{i}-1}(1-uc_k)}{(1+\beta u^{-1})^{r+1-i}}.
\end{equation*}
Since $f_i(u_i)$ is a series in $u_i$ of the highest degree $\mu_{i}-1$, and the vaccum expectation value in \eqref{eq:uiuj} does not contain terms that could increase the power of $u_{i} $ and $u_{j}$ by \eqref{Phibet-vacu}, thus the coefficient extraction $[u_{i}^{\mu_{i}} u_{j}^{\mu_{j}}]$ yields zero.

Hence the Pfaffian reduces to a determinant, and we have
\begin{equation}
{}_{(GP,c)}\langle\mu|
\lambda\rangle_{(gq,b)}=d_{\lambda}(b)d_{\mu}(c)^{-1}\mathrm{det}({D}),
\end{equation}
where
\begin{equation}
D_{ij}=[u_i^{\mu_i}w_j^{\lambda_j}]\frac{(1+\beta w_j)^{r+1-j}}{(1+\beta u_{i}^{-1})^{r+1-i}}
\frac{\prod_{k=1}^{\mu_i-1}(1-u_{i}c_k)}{\prod_{k=1}^{\lambda_j}(1-w_j^{-1}b_k)}
\lvac\Phi^{(\beta)*}(u_{i}){\phi}^{[\beta]}(w_{j})\rvac.
\end{equation}
We compute by using \eqref{Lem:phi-Phi}
\begin{align*}
\lvac\Phi^{(\beta)*}(u_{i}){\phi}^{[\beta]}(w_{j})\rvac&=\frac{1+\beta u_{i}^{-1}}{1+\beta w_j}
\lvac e^{\theta}\Phi^{(\beta)*}(u_{i}){\phi}^{[\beta]}(w_{j})e^{-\theta}\rvac\\
&=\frac{1+\beta u_{i}^{-1}}{1+\beta w_j}
\frac{u_iw_j}{1-u_iw_j}.
\end{align*}
Thus we have
\begin{equation}
D_{ij}=[u_i^{\mu_i}w_j^{\lambda_j}]\frac{(1+\beta w_j)^{r-j}}{(1+\beta u_{i}^{-1})^{r-i}}
\frac{\prod_{k=1}^{\mu_i-1}(1-u_{i}c_k)}{\prod_{k=1}^{\lambda_j}(1-w_j^{-1}b_k)}
\frac{u_iw_j}{1-u_iw_j}.
\end{equation}

We show the inner product $
{}_{(GP,c)}\langle\mu|
\lambda\rangle_{(gq,b)}$ vanishes unless $s=r$.
In fact, if the length $s$ of $\lambda$ is odd, that is, $s'=s+1=r$ and $\lambda_{s'}=0$. Then we have
\begin{align*}
{D}_{ir}
&=[u_i^{\mu_i}w_r^0]
\frac{\prod_{k=1}^{\mu_i-1}(1-u_{i}c_k)}{(1+\beta u_{i}^{-1})^{r-i}}
\frac{u_iw_r}{1-u_iw_r}=0
\end{align*}for $1\leq i\leq r$ because the constant term of $w_r$ in the series is zero, and hence $\det D=0.$

Now we consider the case when the length $r$ of $\mu$  is odd, so $r'=r+1$ and $\mu_{r+1}=0$. The proof proceeds similarly, but now $r +1= s'$ by Wick's theorem. We introduce a matrix $A=(A_{ij})_{1\leq i,j\leq r+1}$ by
\begin{align*}
A_{ij}&=\lvac
\Phi^{(\beta)*(\mu_{i})}(u_i)
\phi^{[\beta](\lambda_{j})}(w_j)
\rvac\quad \text{for $1\le i\le r$ and $1\le j\le r+1$},\\
A_{r+1,j}&=\lvac
\phi^{(\beta)*}(u_{r+1})\phi^{[\beta](\lambda_{j})}(w_j)
\rvac
\quad \text{for $1\leq j\leq r+1$}.
\end{align*}
Then we have  \begin{align*}
{}_{(GP,c)}\langle\mu|
\lambda\rangle_{(gq,b)}=d_{\lambda}(b)d_{\mu}(c)^{-1}
\det(\widetilde{D}),
\end{align*}
where $\widetilde{D}$ is a $(r+1)\times (r+1)$ matrix with entries
\begin{align*}
\widetilde{D}_{ij}&=[u_{i}^{\mu_{i}}w_j^{\lambda_j}]\frac{(1+\beta w_j)^{r+2-j}}{\big(1+\beta u_{i}^{-1}\big)^{r+2-i}}\frac{\prod\limits_{k=1}^{\mu_{i}-1}(1-u_{i}c_k)}{\prod\limits_{k=1}^{\lambda_{j}}(1-w_j^{-1}b_k)}{A}_{ij}.
 \end{align*}

For $1\le j \le r$, it follows from Lemma \ref{Lem:phi-Phi} that
$\widetilde{D}_{r+1,j}=0$.  If $\lambda_{s+1}=\lambda_{r'}> 0$, then in this case $\widetilde{D}_{r+1,r+1}=0$ again from Lemma \ref{Lem:phi-Phi}. This leads to $
{}_{(GP,c)}\langle\mu|
\lambda\rangle_{(gq,b)}=0$.  Thus if we want $
{}_{(GP,c)}\langle\mu|
\lambda\rangle_{(gq,b)}$ to be non-zero, it is necessary to require  $\lambda_{r+1}=\lambda_{s'}=0$. So $s$ is odd and  $r=\ell(\mu) = \ell(\lambda)=s$. Then we have $\widetilde{D}_{r+1,r+1}=1$.  Now the inner product then simplifies to
 \begin{align*}
&{}_{(GP,c)}\langle\mu|
\lambda\rangle_{(gq,b)}
=
d_{\lambda}(b)d_{\mu}(c)^{-1}
\det(D),
\end{align*}
where ${D}$ is the $r\times r$ submatrix of $\tilde{D}$ with $1\le i,j\le r$.
From Lemma \ref{lem:integral2}, one sees that the entries of $D$ is
given by the same
formula \eqref{eq:D in h} as the case of $r$ is even.
\end{proof}

\subsection{Proof of the Duality Lemma \ref{thm: gq VEV}}

\begin{lem}\label{lem:incl} We have
${}_{(GP,c)}\langle\mu|
\lambda\rangle_{(gq,b)} = 0$ unless $\lambda\subset\mu $.
\end{lem}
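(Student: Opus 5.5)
The plan is to read the inclusion condition off the determinantal formula of Proposition~\ref{thm:b-c}, by showing that a large block of the matrix $D$ vanishes whenever $\lambda\not\subset\mu$.

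First, by Proposition~\ref{thm:b-c}, ${}_{(GP,c)}\langle\mu|\lambda\rangle_{(gq,b)}=0$ unless $\ell(\lambda)=\ell(\mu)=:r$. So assume the lengths agree. Then
\[
{}_{(GP,c)}\langle\mu|\lambda\rangle_{(gq,b)}=d_\lambda(b)d_\mu(c)^{-1}\det(D),
\]
with $D_{ij}$ as in \eqref{eq:D in h}. Suppose $\lambda\not\subset\mu$. For strict partitions of the same length this means there is an index $k$ with $1\le k\le r$ and $\lambda_k>\mu_k$. I will show that $D_{ij}=0$ for all $i\ge k$ and $j\le k$.

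Fix $i\ge k\ge j$. Since $i-j\ge 0$, the binomial coefficient $\binom{i-j}{l}$ vanishes for $l>i-j$. Hence the sum in $D_{ij}$ only involves $h_{\mu_i-\lambda_j+l}$ with $0\le l\le i-j$.

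Now use strictness of both partitions:
\[
\mu_i\le \mu_k-(i-k),\qquad \lambda_j\ge \lambda_k+(k-j).
\]
Together these give
\[
\mu_i-\lambda_j+l\;\le\;\mu_i-\lambda_j+(i-j)\;\le\;\mu_k-\lambda_k\;<\;0 .
\]
By the convention in Lemma~\ref{lem:integral2}, $h_p(b_1,\dots,b_n;-c_1,\dots,-c_{m-1})=0$ for $p<0$. So every term of $D_{ij}$ vanishes.

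Thus $D$ has a zero block occupying rows $k,\dots,r$ and columns $1,\dots,k$. This block has size $(r-k+1)\times k$, and $(r-k+1)+k=r+1>r$. A square matrix with a zero submatrix of this size has determinant zero: the rows $k,\dots,r$ are supported on the last $r-k$ columns, so these $r-k+1$ rows are linearly dependent. Hence $\det D=0$, and the pairing vanishes.

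The only delicate point is that for $i<j$ the generalized binomial gives an infinite sum. Those entries, however, lie outside the zero block and play no role, so no obstacle arises. One small check remains: in the odd-length case the same $r\times r$ matrix $D$ is used, obtained after discarding the extra row and column that contribute $\widetilde D_{r+1,r+1}=1$, as established in the proof of Proposition~\ref{thm:b-c}. So the argument applies uniformly to both parities.
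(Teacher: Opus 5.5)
Your proof is correct and follows essentially the same route as the paper. Both arguments reduce to the equal-length case via Proposition~\ref{thm:b-c}, then use strictness to get $\mu_i-\lambda_j+(i-j)\le\mu_k-\lambda_k<0$ for $i\ge k\ge j$, so the truncated binomial sum kills every entry and $D$ has an $(r-k+1)\times k$ zero block, giving $\det D=0$.
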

\begin{proof}
We know equations \eqref{eq:pair c b}, \eqref{eq:D in h} hold.
Suppose there exists an index $a$ with $1 \leq a \leq r$ such that $\lambda_a>\mu_a $. For $p = 0, \dots, r-a$ and $q = 0, \dots, a-1$, we have the inequalities:
\[
\mu_{a+p} \leq \mu_a - p < \lambda_a - p \leq \lambda_{a-q} - p - q.
\]

Now let us consider the summation index $k$ in matrix entry $D_{a+p,a-q}$ given by \eqref{eq:D in h}.
Given that  $a+p\geq a-q$,
the binomial coefficients in \eqref{eq:D in h} vanish unless $k \leq (a+p) - (a-q) = p+q$, and thus
\[
\mu_{a+p} - \lambda_{a-q} + k \leq \mu_{a+p} - \lambda_{a-q} + p + q < 0.
\]
This implies $h_{\mu_{a+p}-\lambda_{a-q}+k} = 0$ for all admissible $k$, so $D_{a+p,a-q} = 0$. These vanishing entries form a zero block in rows $a$ to $r$ and columns $1$ to $a$, rendering $D$ singular. Thus $\det(D) = 0$ and ${}_{(GP,c)}\langle\mu|
\lambda\rangle_{(gq,b)} = 0$, completing the proof.
\end{proof}

\begin{proof}[Proof of Lemma \ref{lem:dual GP vs gq}]
By using Theorem \ref{thm:b-c}, we can assume $r=s$ and then
\begin{align*}{}_{(GP,b)}\langle\mu|
\lambda\rangle_{(gq,b)}={\rm det}({D}|_{c=b}),
\end{align*}
where ${D}|_{c=b}$ is an $r\times r$ matrix  given by
\begin{align}
D_{ij}|_{c=b}&=\sum_{k=0}^\infty
\beta^k \binom{i-j}{k}
h_{\mu_i-\lambda_j+k}(b_1,\ldots,b_{\lambda_j};-b_1,\ldots,-b_{\mu_i-1})\nonumber\\
&=\begin{cases} \sum_{k=0}^\infty\beta^k\binom{i-j}{k} h_{\mu_{i}- \lambda_j+k}(b_{\mu_{i}},\cdots,b_{\lambda_j}) & \text{if $\mu_i\le \lambda_j$}\\
0&\text{if $\mu_i=\lambda_j+1$},
\\
\sum_{k=0}^\infty\beta^k\binom{i-j}{k} e_{\mu_{i}- \lambda_j+k}(-b_{\lambda_{j}+1},\cdots,-b_{\mu_i-1}),
&\text{if $\mu_i>\lambda_j+1$}.
\end{cases}
\label{eq:Dbb}
\end{align}

By Lemma \ref{lem:incl},
we can assume $\lambda_i\le \mu_i$ for $1\le i\le r.$
We will show $
D_{ii}|_{c=b}=\delta_{\lambda_i\mu_i}$ for $1\le i\le r.$
If $\mu_i=\lambda_i$ we have
$
D_{ii}|_{c=b}=
1$ from \eqref{eq:Dbb}.
Suppose $\lambda_i<\mu_i$. If $\mu_i=\lambda_i+1$ then $D_{ii}=0$
again from \eqref{eq:Dbb}. If $\mu_i>\lambda_i+1$
we have from \eqref{eq:Dbb} $$D_{ii}|_{c=b}= e_{\mu_{i}- \lambda_i}(-b_{\lambda_{i}+1},\cdots,-b_{\mu_i-1})=0
$$
because the number of
variables is $\mu_i-\lambda_i-1.$
Next we will show that $D|_{c=b}$ is upper triangular.
In fact, if $i<j$ we have
$\mu_i>\mu_j\ge \lambda_j$ so
\begin{align*}
D_{ij}|_{c=b}&=
\sum_{k=0}^\infty\beta^k\binom{i-j}{k} e_{\mu_{i}- \lambda_j+k}(-b_{\lambda_{j}+1},\cdots,-b_{\mu_i-1})=0
\end{align*}
because the number of variables is $\mu_i-\lambda_j-1.$
Therefore
we have $$
\det D|_{c=b}=
\delta_{\lambda_1\mu_1}\cdots \delta_{\lambda_r\mu_r}=\delta_{\lambda\mu},
$$
as desired.
\end{proof}

\subsection{Conjecture on factorial $gp$-functions }\label{sec:conj}
At the end of this section, we state conjectures on the factorial \(gp\)-functions.
\begin{conj}\label{conj:gp}
Let
\begin{align*}
&{\Phi}_i^{[\beta](k)}:=\prod_{\ell=1}^{k-1}(1+\beta b_\ell)\sum_{j=0}^\infty h_j(b_1\dots,b_k)\Big(\Phi_{i+j}^{[\beta]}-\frac{1}{2}(-\frac{\beta}{2})^{i+j}\Big).
\end{align*}
$$|\lambda\rangle_{gp,b}:=\Phi_{\lambda_1}^{[\beta](\lambda_1)}e^{-\theta}\Phi_{\lambda_2}^{[\beta](\lambda_2)}e^{-\theta}\cdots\Phi_{\lambda_r}^{[\beta](\lambda_r)}(\phi_0+1)\rvac.$$ Then we have   \begin{equation*}
gp_\lambda(y\,|\,b)=\lvac  e^{\mathcal{H}^{[\beta]}(y)}|\lambda\rangle_{(gp,b)}.
\end{equation*}
\end{conj}

 In order to prove this conjecture it suffices  to prove
 \begin{conj}
 For strict partitions $\lambda,\mu$, we have
     \begin{align*}
_{(GQ,b)}\langle\mu|\lambda\rangle_{p,b}=\delta_{\mu,\lambda},\quad
 _{(GQ,b)}\langle\mu|:=(|\mu\rangle_{(GQ,b)})^*.
\end{align*}
 \end{conj}
This leads to a formula of $gp_\lambda(x\,|\,b)$
as a sum of  Pfaffians similar to Corollary \ref{cor:gq-GP}.

\section{Coefficients in terms of  double Grothendieck polynomials}\label{sec_coef_gro_pol}
In this section, we give formulas expressing the general expansion coefficients in terms of Grothendieck polynomials of type A. We also discuss coincidences among these expansion coefficients (Corollary \ref{cor:coins}).

\subsection{Double Grothendieck polynomials}
For each permutation $w\in S_\infty=\bigcup_{n=1}^\infty S_n$, there is
a \emph{double Grothendieck polynomial}
$\G_w(b;c)$ due to Lascoux and Sch\"utzenberger \cite{LaSc} defined as follows.

Define the operators $\pi_i$ for $i\ge 1$ on
$\Z[\beta][b;c]:=\Z[\beta][b_1,b_2,\ldots][c_1,c_2,\dots]$ by
$$
\pi_i=\partial_i \circ (1+\beta b_{i+1}),\quad
\partial_i:=\frac{1-s_i^{(b)}}{b_i-b_{i+1}},
$$
where $s_i$ exchanges $b_i$ and $b_{i+1}.$
Let $w_0^{(n)}$ denote the longest element of $S_n.$

There is a unique family $\{\G_{w}(b;c)\mid w\in S_\infty\}
$
of polynomials such that
for $n\ge 1$ $$
\G_{w_0^{(n)}}(b;c)=\prod_{i+j\le n}
b_i\ominus c_j.$$
and
\begin{equation}\label{eq_p_i_equi}
\pi_i \G_w(b;c)=\begin{cases}
    \G_{ws_i}(b;c) & \text{if $\ell(ws_i)<w$,}\\
-\beta \G_w(b;c)    & \text{if $\ell(ws_i)>w$.}
\end{cases}
\end{equation}
We call the polynomial $\G_w(b;c)$ by double Grothendieck polynomial.
\begin{remark}
    In the existing literature $\G_{w_0^{(n)}}(b;c)$ is defined as the product of $b_i\oplus c_j$ instead of $b_i\ominus c_j$. In this setting, our definition coincide with $\G_w(b,\ominus c)$. We use this sign convention to be consistent with the factorial Grothendieck polynomial defined in \eqref{eq: def fac Groth}. Our convention is consistent with \cite{LLS}.
\end{remark}

We define $\G_{w}(b):=\G_{w}(b;0)$ for $ w\in S_\infty$.
Then $\G_{w}(b)$ are polynomials in $\Z[\beta][b_1,b_2,\dots]$ known as Grothendieck polynomial.

\begin{prop}\cite[Proposition 5.10]{LLS}\label{Prop_inv_gro} $\G_w(b;c)=\G_{w^{-1}}(\ominus c;\ominus b)$. In particular $\G_w(0;c)=\G_{w^{-1}}(\ominus c;0)=\G_{w^{-1}}(\ominus c)$.
\end{prop}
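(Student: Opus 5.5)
The identity $\G_w(b;c)=\G_{w^{-1}}(\ominus c;\ominus b)$ is proved in two stages: first for the longest elements $w_0^{(n)}$, then for arbitrary $w$ by descending induction using divided differences. Throughout, we use that $\G_w$ is determined by the top element $\G_{w_0^{(n)}}$ (for any $n$ with $w\in S_n$) together with the recursion \eqref{eq_p_i_equi}.

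\textbf{Step 1: the top element.} Since $w_0^{(n)}$ is an involution, we must show
\[
\prod_{i+j\le n} b_i\ominus c_j=\prod_{i+j\le n}(\ominus c_i)\ominus(\ominus b_j).
\]
This follows from the elementary identity $(\ominus y)\ominus(\ominus x)=x\ominus y$. To verify it, note that $\ominus x$ is the $\oplus$-inverse of $x$ in the formal group law $x\oplus y$. Hence $(\ominus y)\ominus(\ominus x)=(\ominus y)\oplus x=x\ominus y$. Reindexing $i\leftrightarrow j$, which preserves the set $\{i+j\le n\}$, matches the two products.

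\textbf{Step 2: descending induction.} Define $\mathfrak H_w(b;c):=\G_{w^{-1}}(\ominus c;\ominus b)$. We aim to show that $\mathfrak H$ satisfies the same recursion \eqref{eq_p_i_equi} in the $b$-variables; by Step 1 and uniqueness, this forces $\mathfrak H_w=\G_w$.

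The difficulty is that \eqref{eq_p_i_equi} for $\G_{w^{-1}}$ is a statement about operators acting on its \emph{first} set of variables. After the substitution, those become the $c$-variables of $\mathfrak H$, so we only learn how $\mathfrak H_w$ behaves under $c$-divided differences $\pi_i^{(c)}$ with left multiplication $w\mapsto s_iw$. We therefore need the companion property for the original family: $\G_w$ also obeys a recursion in the $c$-variables, of the form
\[
\tilde\pi_i^{(c)}\G_w=\G_{s_iw}\quad\text{or}\quad -\beta\,\G_w,
\]
where $\tilde\pi_i^{(c)}$ is a suitable isobaric divided difference in $c$. Given this, the two families $\G_w$ and $\mathfrak H_w$ satisfy recursions in both $b$ and $c$, agree at $w_0^{(n)}$ by Step 1, and hence agree everywhere.

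\textbf{Step 3: establishing the $c$-recursion.} The plan is as follows.

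1. Verify the $c$-recursion directly at the top element. This amounts to checking that $\tilde\pi_i^{(c)}$ applied to the staircase product behaves as predicted, mirroring the $b$-side computation. The point is that $\prod_{i+j\le n}(b_i\ominus c_j)$ is symmetric under exchanging the roles of $b$ and $c$ up to the $\ominus$ twist.

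2. Use that the $b$-operators $\pi_j$ and the $c$-operators $\tilde\pi_i^{(c)}$ commute, since they act on disjoint variable sets.

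3. Since the right action ($ws_j$) and the left action ($s_iw$) on $S_n$ commute, descending induction from $w_0^{(n)}$ by $b$-operators then propagates the $c$-recursion to all $w$.

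The main obstacle is pinning down the correct form of $\tilde\pi_i^{(c)}$ under the sign convention $b\ominus c$. Conjugating $\pi_i$ by the substitution $b\mapsto\ominus b$ changes the multiplier $(1+\beta b_{i+1})$ and the denominator $b_i-b_{i+1}$. I would compute that $(\ominus b_i)-(\ominus b_{i+1})=(b_{i+1}-b_i)/\bigl((1+\beta b_i)(1+\beta b_{i+1})\bigr)$ and track the resulting factors. I expect that the operator obtained by transporting $\pi_i$ through $\ominus$ is exactly the one satisfying the $c$-recursion on the top element.

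For the specialization, set $b=0$. Since $\ominus 0=0$, this gives $\G_w(0;c)=\G_{w^{-1}}(\ominus c;0)=\G_{w^{-1}}(\ominus c)$.
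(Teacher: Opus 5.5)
Your strategy is sound, and it differs from the paper, which gives no argument here and simply cites Lam--Lee--Shimozono. Your route is self-contained: it uses only the defining data (staircase plus right recursion) and produces the left recursion in the $c$-variables as a by-product. Its cost is exactly the verification you left as an expectation, and that verification goes through. Transporting $\pi_i$ through $b=\ominus c$, using $1+\beta(\ominus c)=(1+\beta c)^{-1}$ and your formula for $(\ominus c_i)-(\ominus c_{i+1})$, gives
\[
\tilde\pi_i^{(c)}f=-\partial_i^{(c)}\bigl((1+\beta c_i)f\bigr),\qquad \pi_i\bigl[f(\ominus b)\bigr]=(\tilde\pi_i^{(c)}f)(\ominus b).
\]
The second identity is what Step 2 uses.

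Now check the staircase:
\begin{itemize}
\item In $\G_{w_0^{(n)}}$ the only factor not symmetric in $c_i,c_{i+1}$ is $b_{n-i}\ominus c_i$, and $\tilde\pi_i^{(c)}(b\ominus c_i)=-\partial_i^{(c)}(b-c_i)=1$.
\item Likewise the only factor not symmetric in $b_k,b_{k+1}$ is $b_k\ominus c_{n-k}$, and $\pi_k(b_k\ominus c)=1$.
\end{itemize}
Taking $k=n-i$ and using $w_0^{(n)}s_{n-i}=s_iw_0^{(n)}$, this gives $\tilde\pi_i^{(c)}\G_{w_0^{(n)}}=\pi_{n-i}\G_{w_0^{(n)}}=\G_{s_iw_0^{(n)}}$, which is the base case of Step 3. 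You do need this explicit one-factor computation. The staircase symmetry of Step 1 alone only yields $\G_{w_0^{(n)}s_i}(\ominus c;\ominus b)$, and identifying that with $\G_{s_iw_0^{(n)}}(b;c)$ is an instance of the proposition itself.

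In Step 3.3, ``the two actions commute'' suffices only for the descent half of the $c$-recursion, but that is all Step 2 needs. Suppose $s_iw<w$ and $v=ws_j>w$. A length count gives $s_iv<v$ and $(s_iv)s_j=s_iw<s_iv$, so
\[
\tilde\pi_i^{(c)}\G_w=\pi_j\tilde\pi_i^{(c)}\G_v=\pi_j\G_{s_iv}=\G_{s_iw}.
\]
If you also want the $-\beta$ half, apply $(\tilde\pi_i^{(c)})^2=-\beta\tilde\pi_i^{(c)}$ (transported from $\pi_i^2=-\beta\pi_i$) to $\G_w=\tilde\pi_i^{(c)}\G_{s_iw}$. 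Running the same induction directly would instead hit the mixed case $s_iv<v$, $s_ivs_j>s_iv$, which needs the extra Coxeter-group lemma $s_iv=vs_j$.
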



\begin{remark}\label{rem:D and T}
For $i\ge 1$ and  $f\in \Z[\beta][b]$, we have
$$
\pi_i f=\frac{1}{c(\alpha_i)}\left(s_i f-(1+\beta c(\alpha_i))f
\right),$$
where $c(\alpha_i)=b_{i+1}\ominus b_i$.
The right hand side has the same form of the operator $T_i-\beta$, where $T_i$ is defined in \eqref{eq_T_i}. Thus $T_i=\pi_i+\beta$. We use this fact to prove Theorem \ref{thm:gp expansion}.
\end{remark}

For a positive integer $r$, a permutation $w\in S_\infty$ is called $r$-\emph{Grassmannian} if it satisfies
$$w(1)<\cdots<w(r),\;
w(r+1)<w(r+2)<\cdots.$$
Note that such a permutation $w$ is determined by the values
$w(1),\dots,w(r).$
Let $\lambda=(\lambda_1,\lambda_2,\dots,\lambda_r)$ be a partition of length at most $r.$
Then there exists a unique $r$-Grassmannian permutation
$w_\lambda^{(r)}$
such that
\begin{equation}
w_\lambda^{(r)}(i)-i=\lambda_{r-i+1}\quad (1\le i\le r).\label{eq:Grassmann vs partition}
\end{equation}
Let $\lambda=(\lambda_1,\dots,\lambda_r)$ be a partition of length at most $r$. We define $$\rho_{\lambda_i}^{(r)}:=s_{r-i+\lambda_i}\cdots s_{r-i+2}s_{r-i+1}\text{ if }\lambda_i\geq1.$$ If $\lambda_i=0$ we define $\rho_{\lambda_i}^{(r)}:=e$ then
\begin{equation}\label{eq_w_lam_A_k}
 w_\lambda^{(r)}=\rho_{\lambda_r}^{(r)}\cdots \rho_{\lambda_1}^{(r)}.
\end{equation}

\begin{example}
    Let $\lambda=(3,1,0)$ be a partition of length 3. Consider the following tableau is for $r=3$.
\[\begin{ytableau}
  s_3& s_4&s_5&\cdots\\
  s_2&s_3&s_4&\cdots\\
 s_1&s_2&s_3&\cdots
\end{ytableau}\]
The 3-Grassmannian permutation corresponding to $\lambda$ is given by $$w_\lambda^{(3)}=\rho_0^{(3)}\rho_{1}^{(3)}\rho_{3}^{(3)}=(s_2)(s_5s_4s_3).$$
\end{example}

The following fact is well-known (cf. \cite{Mc}, \cite{IN13}).
\begin{prop}\label{prop_fac_gro_vs_dou_gro}
For any partition $\lambda$ of length at most $r$, we have
\begin{equation}\label{eq_detmin_formula}
\G_{w_\lambda^{(r)}}(b;c)=G_\lambda(b_1,\ldots,b_r|c)=
\frac{\det([b_i|c]^{\lambda_j+r-j}(1+\beta b_i)^{j-1})}{\prod_{1\le i<j\le r}(b_i-b_j)}.
\end{equation}
\end{prop}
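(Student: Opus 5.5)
We prove the two equalities separately; the second is essentially a classical bialternant identity, and the first comes from matching divided-difference recursions.

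\emph{Second equality.} We start from the definition \eqref{eq: def fac Groth} of $G_\lambda(b_1,\ldots,b_r|c)$ in the variables $b_1,\dots,b_r$. We use the denominator identity
\[
b_i\ominus b_j=\frac{b_i-b_j}{1+\beta b_j}.
\]
It gives
\[
\prod_{i<j}(b_i\ominus b_j)=\frac{\prod_{i<j}(b_i-b_j)}{\prod_{j}(1+\beta b_j)^{j-1}}.
\]
Multiplying numerator and denominator by $\prod_j(1+\beta b_j)^{j-1}$ turns each summand into an antisymmetrization over a fixed Vandermonde. Since the Vandermonde is alternating, the whole expression becomes
\[
\frac{\sum_{w\in S_r}\operatorname{sgn}(w)\, w\Big(\prod_i [b_i|c]^{\lambda_i+r-i}(1+\beta b_i)^{i-1}\Big)}{\prod_{i<j}(b_i-b_j)}.
\]
The numerator is exactly the determinant $\det([b_i|c]^{\lambda_j+r-j}(1+\beta b_i)^{j-1})$, after transposing the indices as needed. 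This step is routine bookkeeping.

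\emph{First equality.} We argue by descending induction along the weak order, using the characterization \eqref{eq_p_i_equi} of $\G_w$.
\begin{itemize}
\item \emph{Dominant case.} First treat $\lambda=(n-r)^r$, a rectangle with $n$ large. Then $w_\lambda^{(r)}$ is the maximal $r$-Grassmannian permutation in $S_n$, and $\G_{w_\lambda^{(r)}}(b;c)$ can be computed from $\G_{w_0^{(n)}}$. We apply the $\pi_j$ with $j>r$, which only act on $b_{r+1},\dots,b_n$; this reduces $\G_{w_0^{(n)}}$ to the product $\prod_{i\le r}\prod_{j\le n-i}(b_i\ominus c_j)$.

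Alternatively, we verify directly that the bialternant formula gives this same product for the rectangle. In the determinant, each row has a common factor $\prod_{j\le n-r}(b_i\ominus c_j)$. The remaining determinant is the $\lambda=\varnothing$ case, which equals $1$ because $G_\varnothing=1$.

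\item \emph{Inductive step.} We show that the factorial Grothendieck functions $G_\lambda(b_1,\ldots,b_r|c)$ satisfy the same recursion under the $\pi_i$ as the $\G_{w_\lambda^{(r)}}$. Concretely, we prove
\[
\pi_i G_\lambda=
\begin{cases}
G_{\lambda'} & \text{if } w_\lambda^{(r)}s_i<w_\lambda^{(r)},\\
-\beta\, G_\lambda & \text{otherwise,}
\end{cases}
\]
where $\lambda'$ is obtained from $\lambda$ by removing a box, with $w_{\lambda'}^{(r)}=w_\lambda^{(r)}s_i$. Here $\pi_i$ acts on the $b$ variables, and only $i=r$ matters in the generic case. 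In the other cases $G_\lambda$ is symmetric in $b_i,b_{i+1}$, and $\pi_i$ of a symmetric function gives $-\beta$ times it.
\end{itemize}

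The main obstacle is the action of $\pi_r$, which mixes $b_r$ with $b_{r+1}$, a variable absent from the $r$-variable formula. I would first try to bypass this by using the standard fact that factorial Grothendieck polynomials are stable under adding variables. I would then realize $G_\lambda$ via the divided-difference formula in all of $b_1,\ldots,b_n$, namely $\G_{w_\lambda^{(r)}}=\pi_{v}\G_{w_0^{(n)}}$. I would also show that the bialternant in $r$ variables arises from $\pi_{w_0^{(r)}}\pi_{w_0^{(n-r)}\text{-shifted}}$ applied to the product. This is the standard Lascoux-style computation, with $\pi_{w_0}$ given by a symmetrizer with the $(1+\beta b)$ twists; the $(1+\beta b_i)^{j-1}$ factors come precisely from this twisted symmetrizer.

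Since the statement is cited as well-known (\cite{Mc}, \cite{IN13}), a proof may alternatively just reference these, checking only the sign convention $\ominus c$ against the Remark.
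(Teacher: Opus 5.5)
The paper gives no proof of this proposition; it cites it as well known (\cite{Mc}, \cite{IN13}), so your closing suggestion is what the paper actually does. Your derivation of the second equality is correct: $\prod_{i<j}(b_i\ominus b_j)=\prod_{i<j}(b_i-b_j)/\prod_j(1+\beta b_j)^{j-1}$, followed by antisymmetrization, gives exactly the bialternant.

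Your self-contained argument for the first equality, however, has a genuine gap.

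\emph{The inductive step is false.} The operator $\pi_i$ realizes \emph{right} multiplication $w\mapsto ws_i$, and right multiplication by $s_r$ does not preserve $r$-Grassmannian permutations. So in general there is no $\lambda'$ with $w^{(r)}_{\lambda'}=w^{(r)}_\lambda s_r$, and $\pi_rG_\lambda$ is not a factorial Grothendieck polynomial at all. Take $r=2$, $\lambda=(1,1)$. Then $w^{(2)}_\lambda=[2,3,1]$ and $G_{(1,1)}=(b_1\ominus c_1)(b_2\ominus c_1)$. But $\pi_2G_{(1,1)}=b_1\ominus c_1=\G_{s_1}$, which is not even symmetric in $b_1,b_2$. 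Moreover, $w^{(2)}_{(1)}=[1,3,2]$ cannot be reached from $[2,3,1]$ by length-decreasing right multiplications. So the family $\{G_\lambda(b_1,\dots,b_r|c)\}$ is not closed under the $b$-operators. The obstacle is not merely that $\pi_r$ involves $b_{r+1}$.

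\emph{The base case is also off.} Applying the $\pi_j$ with $j>r$ to $\G_{w_0^{(n)}}$ yields $\G_{w_\lambda^{(r)}w_0^{(r)}}=\prod_{i\le r}\prod_{j\le n-i}(b_i\ominus c_j)$, not $\G_{w_\lambda^{(r)}}$. This does not agree with the bialternant value $\prod_{i\le r}\prod_{j\le n-r}(b_i\ominus c_j)$ you compute next; for $r=2$, $n=3$ one has three factors and the other two. A further $\pi_{w_0^{(r)}}$ is needed to kill the staircase factor $\G_{w_0^{(r)}}(b;c_{n-r+1},c_{n-r+2},\dots)$. Your fallback, applying $\pi_{w_0^{(r)}}\pi_{w_0^{(n-r)}}$ to $\G_{w_0^{(n)}}$, likewise only produces the rectangle.

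\emph{The missing idea} is the dominant permutation attached to an arbitrary $\lambda$. Set $u:=w^{(r)}_\lambda w^{(r)}_0=[\lambda_1+r,\lambda_2+r-1,\dots,\lambda_r+1,\dots]$. Its code is the partition $(\lambda_1+r-1,\dots,\lambda_r,0,\dots)$, and $\ell(u)=\ell(w^{(r)}_\lambda)+\binom r2$. Descend from $\G_{w_0^{(n)}}$ through dominant permutations. Each step removes the last box of a row $i$ with $\nu_i=\nu_{i+1}+1$, using $\pi_i(b_i\ominus c_k)=1$ and the linearity of $\pi_i$ over functions symmetric in $b_i,b_{i+1}$. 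This gives $\G_u(b;c)=\prod_{i=1}^r[b_i|c]^{\lambda_i+r-i}$. Then $\G_{w^{(r)}_\lambda}=\pi_{w_0^{(r)}}\G_u$. The operator $\pi_{w_0^{(r)}}f=\sum_{w\in S_r}w\bigl(f/\prod_{i<j}(b_i\ominus b_j)\bigr)$ is the twisted symmetrizer you mention, so the result is literally definition \eqref{eq: def fac Groth}.

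If you prefer an induction on $\lambda$, it must use divided differences in the $c$-variables. By Proposition~\ref{Prop_inv_gro} these act by $w\mapsto s_iw$, which does preserve $r$-Grassmannian permutations and removes boxes from $\lambda$.
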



In otherwords, the factorial Grothendieck polynomial $G_\lambda(b_1,\ldots,b_r|c)$  defined in \eqref{eq: def fac Groth} for any partition $\lambda$ of length at most $r$ is equal to the double Grothendieck
polynomial $\G_{w_{\lambda}^{(r)}}(b;c)$ associated with the $r$-Grassmannian permutation $w_{\lambda}^{(r)}$.

So in particular,
 if we denote the non-equivariant version of
$G_\lambda(x_1,\ldots,x_r|0)$ by
$G_\lambda(x_1,\ldots,x_r)$, we have
\begin{equation}\label{eq:non-eq-Grothendieck}
G_\lambda(x_1,\ldots,x_r)
=\G_{w_\lambda^{(r)}}(x).
\end{equation}
\begin{remark}\label{prop_w_lam}
    Let $\lambda\in \SP(r)$ be a strict partition of length $r$. We recall $w_\lambda$ from \eqref{eq_w_lam_A}.
    Define $\lambda':=\lambda-\delta_r$  where $\delta_r:=(r,r-1,\dots,2,1)$. Note that $\lambda'$ is a partition(need not be strict) of length at most $r$. Then $$w_\lambda w_{\delta_r}^{-1}=w_{\lambda'}^{(r)}.$$ This could be seen through the following tableau is for $r=3$.
\[\begin{ytableau}
*(gray) & *(gray) & *(gray)&  s_3& s_4&s_5&\cdots\\
 \none & *(gray)&*(gray)& s_2&s_3&s_4&\cdots\\
 \none & \none &*(gray)& s_1&s_2&s_3&\cdots
\end{ytableau}\]
 Moreover, Let $\lambda$ and $\mu$ be two elements in $\SP(r)$ such that $\delta_r\subseteq \lambda\subseteq \mu$. Then
    $$w_\mu w_\lambda^{-1}=w_{\mu'}^{(r)}(w_{\lambda'}^{(r)})^{-1},$$ where $\lambda'=\lambda-\delta_r$ and $\mu'=\mu-\delta_r$ and $w_{\mu'}^{(r)}$, $w_{\lambda'}^{(r)}$ are defined using \eqref{eq_w_lam_A_k}.
\end{remark}

\subsection{Expansion of factorial $gp$-function and factorial $GQ$-function}

Using the expansion in \eqref{exp_eq_GP}, and the factorization as in \eqref{eq: GP rho+la 1} we can write

\begin{equation}\label{exp_fac_gro}
G_\mu(x_1,\dots,x_r|c)=\sum_{\lambda:\lambda\subset \mu}C^{}_{\mu\lambda}(c,b)\,G_\lambda(x_1,\dots,x_r|b),
\end{equation}
where $C^{}_{\mu\lambda}(c,b)=C_{\mu\sqcup\delta_r,\lambda\sqcup\delta_r}^{GP}(c,b).$ Again, using the factorization as in \eqref{eq: GQ rho+la 1}, we have the following expansions
\begin{align}
GQ_\mu(x\,|\,c)&=\sum_{\lambda:\lambda\subset \mu,\ell(\lambda)=\ell(\mu)}C^{GQ}_{\mu\lambda}(c,b)\,GQ_\lambda(x\,|\,b),\label{ex_GQ}\\
gp_\lambda(y\,|\,b)&=\sum_{\mu:\lambda\subset\mu,\ell(\lambda)=\ell(\mu)}C_{\lambda\mu}^{gp}(b,c)\,gp_\mu(y\,|\,c)\label{ex:gp}.
\end{align}

Then  $C_{\lambda\mu}^{gp}(b,c)=C^{GQ}_{\mu\lambda}(c,b)=\langle GQ_\mu(x\,|\,c),gp_\lambda(y\,|\,b)\rangle$.


\begin{lem}\label{prop:staircase}
$$
gp_{\delta_r}(y\,|\,b)=
\sum_{\mu\in \SP(r)}
\G_{w_\mu w_{\delta_r}^{-1}}(b;c)\,
gp_{\mu}(y\,|\,c)
$$
\end{lem}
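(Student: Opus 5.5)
The plan is to compute the coefficients by duality with the factorial $GQ$-functions and then read them off by specializing the $x$-variables. By the expansion \eqref{ex:gp} and the duality \eqref{eq:GQ-gp}, the coefficient of $gp_\mu(y\,|\,c)$ in $gp_{\delta_r}(y\,|\,b)$ is
$$C^{gp}_{\delta_r\mu}(b,c)=C^{GQ}_{\mu\delta_r}(c,b)=\langle GQ_\mu(x\,|\,c),gp_{\delta_r}(y\,|\,b)\rangle,$$
which is the coefficient of $GQ_{\delta_r}(x\,|\,b)$ in the expansion \eqref{ex_GQ} of $GQ_\mu(x\,|\,c)$. By \eqref{ex_GQ} this coefficient vanishes unless $\ell(\mu)=r$ and $\delta_r\subset\mu$. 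So the sum may be taken over $\mu\in\SP(r)$, and it remains to identify the coefficient for such $\mu$ with $\G_{w_\mu w_{\delta_r}^{-1}}(b;c)$.

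Fix $\mu\in\SP(r)$ and write $\mu'=\mu-\delta_r$. Every $\lambda$ occurring in \eqref{ex_GQ} has length $r$, so $\lambda=\delta_r+\lambda'$ with $\lambda'$ a partition of length at most $r$. First specialize \eqref{ex_GQ} to the $r$ variables $x_1,\dots,x_r$ (using stability). Then apply the factorization \eqref{eq: GQ rho+la 1} to both sides. The factor $GQ_{\delta_r}(x_1,\dots,x_r)=\prod_{i\le j}(x_i\oplus x_j)$ of \eqref{eq:GQ rho 0} does not depend on the equivariant parameters and is a nonzero divisor, so it can be cancelled. This leaves
$$G_{\mu'}(x_1,\dots,x_r\,|\,c)=\sum_{\lambda'}C^{GQ}_{\mu,\delta_r+\lambda'}(c,b)\,G_{\lambda'}(x_1,\dots,x_r\,|\,b),$$
and the coefficient we want is the one attached to $\lambda'=\varnothing$, where $G_\varnothing=1$.

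To isolate that coefficient, substitute $x_i=b_i$ for $1\le i\le r$. The step needing care is the vanishing $G_{\lambda'}(b_1,\dots,b_r\,|\,b)=0$ for $\lambda'\ne\varnothing$. I would derive it from the vanishing property (Proposition \ref{prop_van_pro}) via the factorization \eqref{eq: GP rho+la 1}, evaluated at $b_{\delta_r}=(b_r,\dots,b_1)$; since all functions involved are symmetric, the order of the arguments is irrelevant.
\begin{itemize}
\item For $\lambda'\ne\varnothing$ we have $\delta_r+\lambda'\not\subseteq\delta_r$, so Proposition \ref{prop_van_pro} gives $GP_{\delta_r+\lambda'}(b_{\delta_r}\,|\,b)=0$. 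By \eqref{eq: GP rho+la 1} this equals $GP_{\delta_r}(b_{\delta_r})\,G_{\lambda'}(b_1,\dots,b_r\,|\,b)$.
\item The factor $GP_{\delta_r}(b_{\delta_r})=b_1\cdots b_r\prod_{i<j}(b_i\oplus b_j)$ is nonzero in $\mathbb{F}$, so $G_{\lambda'}(b_1,\dots,b_r\,|\,b)=0$.
\end{itemize}
Alternatively, one can check the vanishing directly from the bialternant formula \eqref{eq: def fac Groth}: every term contains a vanishing factor $b_{w(i)}\ominus b_{w(i)}$.

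After the substitution, the coefficient is $C^{GQ}_{\mu\delta_r}(c,b)=G_{\mu'}(b_1,\dots,b_r\,|\,c)$. By Proposition \ref{prop_fac_gro_vs_dou_gro} this equals $\G_{w^{(r)}_{\mu'}}(b;c)$, and by Remark \ref{prop_w_lam} we have $w^{(r)}_{\mu'}=w_\mu w_{\delta_r}^{-1}$, which gives the stated formula.

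The main obstacle is bookkeeping rather than substance. One must check that restricting \eqref{ex_GQ} to $r$ variables loses no information, which holds because all terms have length exactly $r$ and the factorization is valid for each of them. One must also check that Proposition \ref{prop_fac_gro_vs_dou_gro} is used with the same sign convention for the parameters as in \eqref{eq: def fac Groth}.
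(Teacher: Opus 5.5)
Your proof is correct and follows essentially the same route as the paper: identify the coefficient with $C^{GQ}_{\mu\delta_r}(c,b)$, specialize $x=b_{\delta_r}$ using the vanishing property and the factorization formula for $GQ$, and then invoke Proposition \ref{prop_fac_gro_vs_dou_gro} and Remark \ref{prop_w_lam}. The only difference is the order of steps. The paper specializes \eqref{ex_GQ} first, which directly kills every $GQ_\lambda(b_{\delta_r}\,|\,b)$ with $\lambda\neq\delta_r$, and then divides by $GQ_{\delta_r}(b_{\delta_r})$. You cancel $GQ_{\delta_r}(x_1,\dots,x_r)$ first and therefore need $G_{\lambda'}(b_1,\dots,b_r\,|\,b)=0$ for $\lambda'\neq\varnothing$, which you justify correctly.
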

\begin{proof}
We consider $\mu \in \SP(r)$ and substitute $x=b_{\delta_r}$ in \eqref{ex_GQ}, where $b_{\delta_r}=( b_r,\dots, b_1,0,\cdots)$. Then, using the vanishing property as in Proposition \ref{prop_van_pro}
$$GQ_\mu(b_{\delta_r}|c)=C_{\mu\delta_r}^{GQ}(c,b)GQ_{\delta_r}(b_{\delta_r}|b).$$
Now using the factorization property as in \eqref{eq: GQ rho+la 1} and \eqref{eq:GQ rho 0} we have:
\begin{align*}
GQ_{\delta_r}(x_1,\ldots,x_r|b)
&=GQ_{\delta_r}(x_1,\ldots,x_r),\\
GQ_{\mu}(x_1,\ldots,x_r|c)
&=GQ_{\delta_r}(x_1,\ldots,x_r)G_{\mu'}(x_1,\ldots,x_r|c),
\end{align*}
where $\mu'=\mu\setminus\delta_r$. Therefore, from these equations, we have
\begin{align*}
C_{\mu\delta_r}^{GQ}(c,b)=\frac{GQ_\mu(b_{\delta_r}|c)}{GQ_{\delta_r}(b_{\delta_r}|b)}&=\frac{GQ_{\delta_r}(b_{\delta_r})G_{\mu'}(b_{\delta_r}|c)}{GQ_{\delta_r}(b_{\delta_r})}\\
&=G_{\mu'}(b_1,b_2,\dots,b_r|c)=\G_{w_{\mu'}^{(r)}}(b;c)=\G_{w_\mu w_{\delta_r}^{-1}}(b;c).
\end{align*}
The last equality follows from Remark \ref{prop_w_lam}, and the second last  equality from \eqref{eq_detmin_formula}.
This completes the proof.
\end{proof}

Similarly, we consider the expansions of $gq_\lambda(y\,|\,b)$ and $GP_{\mu}(x\,|\,c)$ as in \eqref{exp_eq_gq} and \eqref{exp_eq_GP} respectively and use the vanishing and factorization property of $GP_{\mu}(x\,|\,c)$ to deduce the following:

\begin{lem}\label{prop:staircase_1}
$$
gq_{\delta_r}(y\,|\,b)=
\sum_{\mu\in \SP(r)}
\G_{w_\mu w_{\delta_r}^{-1}}(b;c)\,
gq_{\mu}(y\,|\,c)
$$
\end{lem}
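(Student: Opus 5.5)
Mirroring the proof of Lemma \ref{prop:staircase}, with $GQ,gp$ replaced by $GP,gq$. By Theorem \ref{thm:gq(xb)} (or the duality \eqref{eq:GP-gq}), the coefficient of $gq_\mu(y\,|\,c)$ in $gq_{\delta_r}(y\,|\,b)$ equals the coefficient $c^{GP}_{\mu\delta_r}(c\,|\,b)$ of $GP_{\delta_r}(x\,|\,b)$ in the expansion \eqref{exp_eq_GP} of $GP_\mu(x\,|\,c)$. Both are the pairing $\langle GP_\mu(x\,|\,c), gq_{\delta_r}(y\,|\,b)\rangle$. Moreover, only $\mu\supset\delta_r$ with $\ell(\mu)=r$ occur, i.e.\ $\mu\in\SP(r)$. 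So it suffices to show
\[
c^{GP}_{\mu\delta_r}(c\,|\,b)=\G_{w_\mu w_{\delta_r}^{-1}}(b;c)\quad\text{for } \mu\in\SP(r).
\]

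To compute this coefficient, I substitute $x=b_{\delta_r}=(b_r,\dots,b_1,0,0,\dots)$ into
\[
GP_\mu(x\,|\,c)=\sum_{\lambda\subset\mu,\ \ell(\lambda)=r}c^{GP}_{\mu\lambda}(c\,|\,b)\,GP_\lambda(x\,|\,b).
\]
Every $\lambda$ in the sum has length $r$, so $\lambda\supseteq\delta_r$. By Proposition \ref{prop_van_pro}, $GP_\lambda(b_{\delta_r}\,|\,b)=0$ unless $\lambda\subseteq\delta_r$. Hence only $\lambda=\delta_r$ survives, and $GP_{\delta_r}(b_{\delta_r}\,|\,b)\ne0$. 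This gives
\[
c^{GP}_{\mu\delta_r}(c\,|\,b)=\frac{GP_\mu(b_{\delta_r}\,|\,c)}{GP_{\delta_r}(b_{\delta_r}\,|\,b)}.
\]
Because $b_{\delta_r}$ has only $r$ nonzero entries, stability reduces both sides to the $r$-variable functions evaluated at $(b_r,\dots,b_1)$.

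Next I apply Lemma \ref{lem_fac_GP}. By \eqref{eq:GP rho 0}, $GP_{\delta_r}(x_1,\dots,x_r\,|\,b)=x_1\cdots x_r\prod_{i<j}(x_i\oplus x_j)$, which is independent of $b$. By \eqref{eq: GP rho+la 1}, $GP_\mu(x_1,\dots,x_r\,|\,c)=GP_{\delta_r}(x_1,\dots,x_r)\,G_{\mu'}(x_1,\dots,x_r\,|\,c)$ with $\mu'=\mu-\delta_r$. The prefactor is nonzero at $x=(b_r,\dots,b_1)$ (generic parameters), so it cancels. Since $G_{\mu'}$ is symmetric, the ordering of the variables does not matter, and
\[
c^{GP}_{\mu\delta_r}(c\,|\,b)=G_{\mu'}(b_1,\dots,b_r\,|\,c).
\]
Proposition \ref{prop_fac_gro_vs_dou_gro} identifies this with $\G_{w^{(r)}_{\mu'}}(b;c)$, and Remark \ref{prop_w_lam} gives $w^{(r)}_{\mu'}=w_\mu w_{\delta_r}^{-1}$.

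The main point needing care is the identification of coefficients: the pairing between $GP$ and $gq$ must give the $gq$-expansion coefficient with exactly the transposed indices $(\mu,\delta_r)$ and the parameter order $(c\,|\,b)$. This is the content of Theorem \ref{thm:gq(xb)}, where $c^{gq}_{\lambda\mu}(b\,|\,c)=c^{GP}_{\mu\lambda}(c\,|\,b)$. The remaining steps are direct consequences of the vanishing and factorization properties.
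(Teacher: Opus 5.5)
Your proposal is correct and follows the paper's route. The paper proves this lemma by repeating the argument of Lemma \ref{prop:staircase} with $GQ,gp$ replaced by $GP,gq$: it specializes $x=b_{\delta_r}$ in the $GP$-expansion, applies the vanishing property and the factorization \eqref{eq: GP rho+la 1}, and then uses Proposition \ref{prop_fac_gro_vs_dou_gro} together with Remark \ref{prop_w_lam}, exactly as you do, including the identification $c^{gq}_{\lambda\mu}(b\,|\,c)=c^{GP}_{\mu\lambda}(c\,|\,b)$ from Theorem \ref{thm:gq(xb)}.
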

\begin{proof}
The proof follows similarly as Lemma \ref{prop:staircase} using the factorization in \eqref{eq: GP rho+la 1}.
\end{proof}

\begin{cor}
  $gp_{(r,r-1,\ldots,1)}(y\,|\,b)$ and  $gq_{(r,r-1,\ldots,1)}(y\,|\,b)$ are symmetric in $b_1,\ldots,b_r.$
\end{cor}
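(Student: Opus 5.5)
The idea is to read off the symmetry of $gp_{\delta_r}(y\,|\,b)$ and $gq_{\delta_r}(y\,|\,b)$ in $b_1,\ldots,b_r$ directly from the expansions in Lemma \ref{prop:staircase} and Lemma \ref{prop:staircase_1}. Fix an auxiliary parameter sequence $c=(c_1,c_2,\ldots)$ that is independent of $b$. Then Lemma \ref{prop:staircase} gives
\[
gp_{\delta_r}(y\,|\,b)=\sum_{\mu\in\SP(r)}\G_{w_\mu w_{\delta_r}^{-1}}(b;c)\,gp_\mu(y\,|\,c).
\]
Here the functions $gp_\mu(y\,|\,c)$ do not involve $b$ at all. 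So it suffices to show that each coefficient $\G_{w_\mu w_{\delta_r}^{-1}}(b;c)$ is symmetric in $b_1,\ldots,b_r$.

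By Remark \ref{prop_w_lam}, $w_\mu w_{\delta_r}^{-1}=w^{(r)}_{\mu'}$ with $\mu'=\mu-\delta_r$. By Proposition \ref{prop_fac_gro_vs_dou_gro}, this coefficient equals the factorial Grothendieck polynomial $G_{\mu'}(b_1,\ldots,b_r\,|\,c)$. This is visibly symmetric in $b_1,\ldots,b_r$, since definition \eqref{eq: def fac Groth} symmetrizes over $S_r$. Equivalently, the determinant in \eqref{eq_detmin_formula} and the Vandermonde-type denominator are both alternating, so their ratio is symmetric.

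Alternatively, I would argue via Grassmannian descents. An $r$-Grassmannian permutation $w$ satisfies $\ell(ws_i)>\ell(w)$ for $1\le i\le r-1$, so $\pi_i\G_w=-\beta\G_w$ by \eqref{eq_p_i_equi}. Unwinding $\pi_i=\partial_i(1+\beta b_{i+1})$, this says exactly that $s_i\G_w=\G_w$. Either argument gives the symmetry of every coefficient, and hence of $gp_{\delta_r}(y\,|\,b)$.

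The $gq$ case is identical, using Lemma \ref{prop:staircase_1} in place of Lemma \ref{prop:staircase}. The only subtlety is to make sure the identity holds as an identity in $b$ with $c$ generic, so that the symmetry of the coefficients transfers to the sum. This follows because the expansion holds for the independent parameter sequence $c$, and the $gp_\mu(y\,|\,c)$ (respectively the $gq_\mu(y\,|\,c)$) are free of $b$.
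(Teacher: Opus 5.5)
Your proposal is correct and matches the paper's intended argument. The paper states this corollary without proof immediately after Lemmas \ref{prop:staircase} and \ref{prop:staircase_1}, and it follows exactly as you say: each coefficient $\G_{w_\mu w_{\delta_r}^{-1}}(b;c)=G_{\mu'}(b_1,\ldots,b_r\,|\,c)$ is symmetric in $b_1,\ldots,b_r$, while $gp_\mu(y\,|\,c)$ and $gq_\mu(y\,|\,c)$ do not involve $b$. Your alternative check, that for an $r$-Grassmannian $w$ the identity $\pi_i\G_w=-\beta\G_w$ is equivalent to $s_i\G_w=\G_w$, is also valid.
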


Let $\lambda\in \SP(r)$ and $\lambda\setminus \delta_r$ has $k$ many removable boxes with contents ${j_1},{j_2},\dots, {j_k}$. For example if we consider the strict partition $\lambda=(5,3,1)$ as in Example \ref{ex_st_par} then $\lambda-\delta_3$ has 2 removable boxes with content $4$ and 2. If we consider the strict partition $\lambda=(5,4,1)$ then $\lambda-\delta_3$ has only 1 removable boxes with content $3$.  Note that if $m\neq \ell$ then $|j_m-j_\ell|\geq  2$, thus $s_{j_m}s_{j_{\ell}}=s_{j_\ell}s_{j_{m}}$. Without loss of generality we can assume $j_1>j_2>\dots>j_k$. Also for any subset $I=\{i_1,i_2,\dots,i_n\}\subset \{{j_1},{j_2},\dots, {j_k}\}$ define $w_I=s_{i_1}s_{i_2}\dotsm s_{i_n}$. If $I=\emptyset$ then we consider $w_I=e$ (the identity element of $\Winf$).

\begin{thm}\label{thm:gp expansion}
Let $\lambda=(\lambda_1,\dots,\lambda_r)\in \SP(r)$ and $\lambda\setminus\delta_r$ has exactly $k$ many removable boxes with contents ${j_1},{j_2},\dots, {j_k}$. Then the coefficients $C_{\lambda\mu}^{gp}(b,c)$ and $C_{\lambda\mu}^{gq}(b,c)$ defined in \eqref{ex:gp} and \eqref{exp_eq_gq} are given by
\begin{equation}\label{eq_cof_arb_lam}
C_{\lambda\mu}^{gp}(b,c)=C_{\lambda\mu}^{gq}(b,c)=\sum_{I\subset\{j_1,\dots,j_k\}}\beta^{|I|}\G_{{w_\mu w_\lambda^{-1}} w_I}(b;{c}),
\end{equation}
\end{thm}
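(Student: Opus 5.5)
We argue for $gp$; the $gq$ case is identical, with Lemma~\ref{prop:staircase_1} in place of Lemma~\ref{prop:staircase}. Since both sides vanish unless $\ell(\mu)=\ell(\lambda)=r$ and $\delta_r\subseteq\lambda\subseteq\mu$, we may work inside $\SP(r)$.

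\textbf{Step 1: generate $gp_\lambda$ from the staircase.} By Remark~\ref{prop_w_lam}, $w_\lambda=w_{\lambda'}^{(r)}w_{\delta_r}$ with $\lambda'=\lambda-\delta_r$. The factor $w_{\lambda'}^{(r)}$ is a product of $s_i$ with $i\ge1$, and each is length-additive. Theorem~\ref{thm_ope_des} then gives
\[
gp_\lambda(y\,|\,b)=T_{w_{\lambda'}^{(r)}}\,gp_{\delta_r}(y\,|\,b),
\]
where the operators $T_i$ ($i\ge1$) act on the parameters $b$ only.

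\textbf{Step 2: apply these operators to the staircase expansion.} Apply $T_{w_{\lambda'}^{(r)}}$ to Lemma~\ref{prop:staircase}. The functions $gp_\mu(y\,|\,c)$ do not involve $b$, so the operator passes through them and acts only on the coefficients:
\[
gp_\lambda(y\,|\,b)=\sum_{\mu}\bigl(T_{w_{\lambda'}^{(r)}}\G_{w_\mu w_{\delta_r}^{-1}}(b;c)\bigr)\,gp_\mu(y\,|\,c).
\]
This reduces the theorem to the polynomial identity
\[
T_{w_{\lambda'}^{(r)}}\G_{w_\mu w_{\delta_r}^{-1}}=\sum_{I}\beta^{|I|}\G_{w_\mu w_\lambda^{-1}w_I}.
\]

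\textbf{Step 3: compute with $T_i=\pi_i+\beta$.} By Remark~\ref{rem:D and T}, $T_i=\pi_i+\beta$. From \eqref{eq_p_i_equi}:
\begin{itemize}
\item if $\ell(ws_i)<\ell(w)$, then $T_i\G_w=\G_{ws_i}+\beta\G_w$;
\item if $\ell(ws_i)>\ell(w)$, then $T_i\G_w=0$.
\end{itemize}
We induct along a reduced word of $w_{\lambda'}^{(r)}$, adding boxes of $\lambda'$ one at a time, i.e.\ passing from $\nu$ to $\nu\cup\{\text{box}\}$ with $\delta_r\subseteq\nu\subseteq\lambda$.

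The inductive claim is that after reaching $\nu$ the coefficient of $gp_\mu$ equals
\[
\sum_{I\subseteq R(\nu)}\beta^{|I|}\G_{w_\mu w_\nu^{-1}w_I},
\]
where $R(\nu)$ is the set of contents of removable boxes of $\nu/\delta_r$. When a box of content $i$ is added to reach $\nu^+$, we must check three things.
\begin{itemize}
\item The term $I$ is sent to the term $I$ (now for $\nu^+$) plus $\beta$ times a term that survives only if $i\in R(\nu^+)$. Because of the inverse, $w_\mu w_{\nu^+}^{-1}=w_\mu w_\nu^{-1}s_i$, with the generator acting on the correct side.
\item The old removable contents $i\pm1$ adjacent to the new box cease to be removable. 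The terms with $I$ containing $i\pm1$ are killed, since $\ell$ increases there.
\item The remaining $s_j$ for $j\in I$ commute with $s_i$ because $|j-i|\ge2$.
\end{itemize}
The base case $\nu=\delta_r$ is Lemma~\ref{prop:staircase}, with $R(\delta_r)=\emptyset$.

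\textbf{Main obstacle.} The hard part is the length bookkeeping in Step~3. For every $\mu$ we need to know when $\ell(w_\mu w_\nu^{-1}w_I s_i)$ goes down versus up, including when $w_\mu w_\nu^{-1}$ is not a skew-Grassmannian element. The terms with $\mu\not\supseteq\nu$ must vanish consistently. My first attempt would be to realize $w_\mu w_\nu^{-1}$ as the $321$-avoiding permutation of the skew shape $\mu'/\nu'$ and read descents off the diagram. An alternative would be to derive the formula as a Möbius-type inversion of the identity $T_i=\pi_i+\beta$ on the $0$-Hecke algebra and check it by matching leading terms.
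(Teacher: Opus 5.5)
You follow the same route as the paper: expand $gp_{\delta_r}(y\,|\,b)$ by Lemma~\ref{prop:staircase}, then apply $T_i=\pi_i+\beta$ one box at a time, using \eqref{eq_p_i_equi}. The algebra of your Step~3 is right. However, it rests on two descent facts that you only assert and then list as the unresolved ``main obstacle''. First, $\ell(w_\mu w_\nu^{-1}w_Is_i)<\ell(w_\mu w_\nu^{-1}w_I)$ whenever $I\cap\{i-1,i+1\}=\emptyset$. Second, the length goes up when $I$ contains $i-1$ or $i+1$. Without these two facts the inductive step is not proved, so this is a genuine gap.

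Your inductive claim is also misstated. It cannot hold for all $\mu$, because the formula does not vanish when $\mu\not\supseteq\nu$. For $\nu=\xi_r$ and $\mu=\delta_r$ it gives $\G_{s_r}+\beta\neq 0$, while the true coefficient is $0$. For the same reason, ``both sides vanish'' in your first paragraph is false for the right-hand side. Restrict the claim to $\mu\supseteq\nu$; for the remaining $\mu$ nothing needs checking. Pairing with $GQ_\mu(x\,|\,c)$ commutes with $T_i$, so $C^{gp}_{\nu^+\mu}=T_iC^{gp}_{\nu\mu}$ for every $\mu$. For $\mu\not\supseteq\nu^+$ this coefficient is zero by the support of \eqref{ex:gp}. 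Non-skew elements $w_\mu w_\nu^{-1}$ therefore never need to be analysed.

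For $\mu\supseteq\nu^+$ the descent check takes a few lines in one-line notation; you need neither 321-avoidance nor M\"obius inversion. Write $v=w_\mu w_\nu^{-1}=w^{(r)}_{\mu'}(w^{(r)}_{\nu'})^{-1}$ (Remark~\ref{prop_w_lam}) and $A=w^{(r)}_{\nu'}(\{1,\dots,r\})$. Then $v$ is increasing on $A$ and on its complement $A^c$. Adding a box of content $i$ means $i\in A$ and $i+1\notin A$. A removable content $j$ of $\nu\setminus\delta_r$ means $j+1\in A$ and $j\notin A$. Hence $i-1\in R(\nu)$ forces $i-1\notin A$, and $i+1\in R(\nu)$ forces $i+2\in A$.

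Lengths are additive along containment, so $\ell(vs_i)=\ell(w_\mu w_{\nu^+}^{-1})=\ell(v)-1$, which gives $v(i)>v(i+1)$. Put $u=vw_I$; then $\ell(us_i)<\ell(u)$ exactly when $u(i)>u(i+1)$. There are four cases.
\begin{itemize}
\item If $i\pm1\notin I$, then $u(i)=v(i)>v(i+1)=u(i+1)$, a descent.
\item If only $i-1\in I$, then $u(i)=v(i-1)<v(i+1)=u(i+1)$, since both points lie in $A^c$.
\item If only $i+1\in I$, then $u(i)=v(i)<v(i+2)=u(i+1)$, since both lie in $A$.
\item If both lie in $I$, then $v(i-1)<v(i+1)<v(i)<v(i+2)$, so again $u(i)<u(i+1)$.
\end{itemize}
Combine this with $R(\nu^+)=(R(\nu)\setminus\{i\pm1\})\cup\{i\}$, which holds because a diagonal cannot carry both an addable and a removable box. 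Your bookkeeping then closes the induction.

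Your case list, which kills the terms containing $i-1$ or $i+1$, is actually more complete than the paper's. The paper considers only a newly removable content $j_k-1$, not $j_k+1$. It justifies the length increase only by $s_{j_k-1}s_{j_k}\neq s_{j_k}s_{j_k-1}$, and it checks vanishing only at $\mu=\eta$. So you identified the crux correctly, but the argument above still has to be supplied.
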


\begin{proof}
We address the proof for the coefficient $C_{\lambda\mu}^{gp}(b,c)$. The argument proceeds by induction on the
length of $|\lambda|-|\delta_r|$.
    For $\lambda=\delta_r$ using Lemma \ref{prop:staircase}
    \begin{equation}\label{eq_delta_r}
     gp_{\delta_r}(y\,|\,b)=\sum_{\mu\in \SP(r) }\G_{w_\mu w_{\delta_r}^{-1}}(b;c)gp_\mu(y\,|\,c).
    \end{equation}
Consider the strict partition $\xi_r$ of length $r$ by $\xi_r:=(r+1,r-1,r-2,\dots,2,1)$.  Note that $\xi_r$ is the unique strict partition of size $|\delta_r|+1$ containing $\delta_r$. Thus $$w_{\xi_r}=s_rw_{\delta_r}>w_{\delta_r}.$$
For any $\xi_r\subseteq\mu$ we have $w_\mu w_{\delta_r}^{-1}s_r=w_\mu w_{\xi_r}^{-1}<w_\mu w_{\delta_r}^{-1}$. Apply $T_r$ on both sides of \eqref{eq_delta_r} and using Theorem \ref{thm_ope_des} we have
 $$gp_{\xi_r}(y\,|\,b)=\sum_{\mu\in \SP(r)}T_r\Big(\G _{w_\mu w_{\delta_r}^{-1}}(b;c)\Big)gp_\mu(y\,|\,c).$$

If $\mu=\delta_r$ then $T_r(\G _{w_\mu w_{\delta_r}^{-1}}(b;c))=T_r(\G_{e}(b;c))=0$. For $\mu
\supset \xi_r$,  using \eqref{eq_p_i_equi} we have
$$T_r(\G _{w_\mu w_{\delta_r}^{-1}}(b;c))=(\pi_r+\beta)(\G_{ w_\mu w_{\delta_r}^{-1}}(b;c))=\G_{w_\mu w_{\xi_r}^{-1}}(b;c)+\beta\G_{w_\mu w_{\delta_r}^{-1}}(b;c).$$

Thus
 $$gp_{\xi_r}(y\,|\,b)=\sum_{\mu~:~\xi_r\subset\mu; \ell(\mu)=r}\Big(\G_{w_\mu w_{\xi_r}^{-1}}(b;c)+\beta \G_{w_\mu w_{\delta_r}^{-1}}(b;c)\Big)gp_\mu(y\,|\,c).$$
Thus \eqref{eq_cof_arb_lam} holds for $\lambda=\xi_r$.

 For arbitrary $\lambda$, let $j_k$ be the content of a removable box $\lambda\setminus\delta_r$ and
$\eta$ be obtained from $\lambda$ by removing box of content ${j_k}$. Note that  \begin{align*}
    |\lambda|=1+|\eta| \text{ and }w_\lambda=s_{j_k}w_\eta>w_\eta.
\end{align*}  We have two possibilities. Either $\eta\setminus\delta_r$ has $k-1$ many removable box with contents ${j_1},\dots,j_{k-1}$, or, $\eta\setminus\delta_r$ has $k$ many removable box with contents ${j_1},{j_2},\dots, {j_{k-1}}, {j_{k}-1}$. 
If $\eta\setminus\delta_r$ has $k-1$ many removable box with contents ${j_1},\dots,j_{k-1}$ then using the induction hypothesis
\begin{equation}\label{eq_ind_hyp}
gp_\eta(y\,|\,b)=\sum_{\substack{\mu:~\eta\subset\mu,\\ \ell(\mu)=\ell(\eta) }}\Big(\sum_{I\subset\{j_1,\dots,j_{k-1}\}}\beta^{|I|}\G_{{w_\mu w_\eta^{-1}} w_I}(b;c)\Big)gp_\mu(y\,|\,c).
\end{equation}
One sees $|j_k-j_\ell|\geq  2$ for $\ell\neq k$. Thus $s_{j_k}s_{j_{\ell}}=s_{j_\ell}s_{j_{k}}$ and $w_Is_{j_k}>w_I$ if $I\subset\{j_1,\dots,j_{k-1}\}$.

We apply $T_{j_k}$
to the both sides of \eqref{eq_ind_hyp}.
For the left hand side, $T_{j_k}(gp_{\eta}(y\,|\,b))$ is same as $gp_{\lambda}(y\,|\,b)$ using Theorem \ref{thm:gp expansion}.
 For the right hand side, the coefficient of $gp_\eta(y\,|\,c)$ vanishes because
 $T_{j_k}\G_{w_I}(b;c)=0$ for all $I \subset \{j_1,\dots,j_{k-1}\}$.
Also, for any $\mu$ such that $\lambda\subset \mu$ we have  $w_\mu w_{\lambda}^{-1}w_I=w_\mu w_{\eta}^{-1}s_{j_k}w_I=w_\mu w_{\eta}^{-1}w_Is_{j_k} <w_\mu w_{\eta}^{-1}w_I$. Therefore using \eqref{eq_p_i_equi} \begin{align*}
&T_{j_k}\Big(\sum_{I\subset\{j_1,\dots,j_{k-1}\}}\beta^{|I|}\G_{w_\mu w_\eta^{-1} w_I}(b;c)\Big)\\
&=\sum_{I\subset\{j_1,\dots,j_{k-1}\}}\Big(\beta^{|I|}\G_{w_\mu w_\lambda^{-1} w_I}(b;c)+\beta^{{|I|}+1}\G_{{w_\mu w_\eta^{-1}} w_I}(b;c)\Big)\\
&=\sum_{I\subset\{j_1,\dots,j_{k}\}}\beta^{|I|}\G_{{w_\mu w_\lambda^{-1}} w_I}(b;{c}).
\end{align*}
From 1st line to 2nd line we use the fact $T_{j_k}=\pi_{j_k}+\beta$. From 2nd line to last line we use the fact that $w_\mu w_{\lambda}^{-1}s_{j_k}w_I=w_\mu w_{\eta}^{-1}w_I$. Thus applying $T_{j_k}$ on both sides of \eqref{eq_ind_hyp}, and we have our desired result.

For the remaining case, let $\eta\setminus\delta_r$ has $k$ many removable box with contents ${j_1},{j_2},\dots, {j_{k-1}}, {j_{k}-1}$. Then there exist some extra terms $\beta^{|I|}\G_{{w_\mu w_\eta^{-1}} w_I}(b;c)$ in the right hand side of \eqref{eq_ind_hyp}, where $j_{k}-1\in I\subset \{j_1,j_2,\ldots, j_{k-1}, j_k-1\}$. Then ${w_\mu w_\eta^{-1}} w_Is_{j_k}>{w_\mu w_\eta^{-1}} w_I$ as $s_{j_k-1}s_{j_k}\neq s_{j_k}s_{j_k-1}$. Therefore, these extra terms vanish after applying $T_{j_k}$ as $T_{j_k}=\pi_{j_k}+\beta$ and $$\pi_{j_k}\Big(\G_{{w_\mu w_\eta^{-1}} w_I}(b;c)\Big)=-\beta \G_{{w_\mu w_\eta^{-1}} w_I}(b;c).$$

The argument for the coefficient $C_{\lambda \mu }^{gq}(b,c)$ proceeds in the same manner using the Lemma \ref{prop:staircase_1}.  We only need to observe that for all $i\geq 1$, the operators $T_i$ in types $\mathrm{B}_{\infty }$ and $\mathrm{C}_{\infty }$ coincide, and the proof follows accordingly.
\end{proof}

\begin{cor}\label{cor:coins}
Consider $\lambda,\mu \in \SP(r)$ such $\lambda\subset\mu$. Then $$C_{\lambda\mu}^{gp}(b,c)=C_{\lambda\mu}^{gq}(b,c)=C_{\mu\lambda}^{GP}(c,b)=C_{\mu\lambda}^{GQ}(c,b).$$
    \end{cor}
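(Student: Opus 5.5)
The plan is to combine three facts. First, Theorem~\ref{thm:gp expansion} already gives $C^{gp}_{\lambda\mu}(b,c)=C^{gq}_{\lambda\mu}(b,c)$. Second, the duality relations \eqref{eq:GQ-gp} and \eqref{eq:GP-gq} identify the $gp$/$gq$ transition matrices with the transposes of the $GQ$/$GP$ transition matrices, with the roles of $b$ and $c$ swapped. So the proof reduces to two duality identities: $C^{gp}_{\lambda\mu}(b,c)=C^{GQ}_{\mu\lambda}(c,b)$ and $C^{gq}_{\lambda\mu}(b,c)=C^{GP}_{\mu\lambda}(c,b)$.

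For the first identity, I use the pairing $\langle\cdot,\cdot\rangle$ between the $x$-side and the $y$-side determined by the Cauchy kernel. Since the left-hand sides of \eqref{eq:GQ-gp} and \eqref{eq:GP-gq} do not depend on the equivariant parameters, each of $\{GQ_\lambda(x\,|\,b)\}$ and $\{GQ_\lambda(x\,|\,c)\}$ is dual to the corresponding $gp$-family. Pair both sides of \eqref{ex:gp} with $GQ_\mu(x\,|\,c)$. The right-hand side gives $C^{gp}_{\lambda\mu}(b,c)$. Pair \eqref{ex_GQ} with $gp_\lambda(y\,|\,b)$; this gives $C^{GQ}_{\mu\lambda}(c,b)$. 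Both computations evaluate the same number $\langle GQ_\mu(x\,|\,c),gp_\lambda(y\,|\,b)\rangle$, which is the identity already recorded just before Lemma~\ref{prop:staircase}.

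For the second identity I argue in the same way with $GP$ and $gq$. Alternatively, I can read it off directly from Theorem~\ref{thm:gq(xb)}: there $c^{gq}_{\lambda\mu}(b\,|\,c)=c^{GP}_{\mu\lambda}(c\,|\,b)$ is obtained because both equal ${}_{(GP,c)}\langle\mu|\lambda\rangle_{(gq,b)}$, via Lemma~\ref{lem:x1x2} and Lemma~\ref{lem:dual GP vs gq}. Chaining the identities gives
\[
C^{GQ}_{\mu\lambda}(c,b)=C^{gp}_{\lambda\mu}(b,c)=C^{gq}_{\lambda\mu}(b,c)=C^{GP}_{\mu\lambda}(c,b).
\]

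The main obstacle is justifying that the duality pairing is well defined over the ring of coefficients with both parameter sequences $b$ and $c$. Concretely, one must know that the transition sums are finite, so that pairing term by term is legitimate. This holds because $\mu$ ranges over $\lambda\subset\mu$ with $\ell(\mu)=\ell(\lambda)$ for $gp$ and $gq$. On the $GQ$/$GP$ side, the sums range over the finitely many $\lambda\subset\mu$. I would extend scalars from $\Rep$ to the ring generated by the $b_i,c_i,\ominus b_i,\ominus c_i$, observe that the Cauchy identities continue to hold there with either parameter set, and conclude that the two dual bases remain dual after the extension.
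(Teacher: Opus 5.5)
Your argument is essentially the paper's. The corollary follows by combining $C^{gp}_{\lambda\mu}(b,c)=C^{gq}_{\lambda\mu}(b,c)$ (Theorem~\ref{thm:gp expansion}) with two duality identities: $C^{gp}_{\lambda\mu}(b,c)=C^{GQ}_{\mu\lambda}(c,b)=\langle GQ_\mu(x\,|\,c),gp_\lambda(y\,|\,b)\rangle$, recorded just before Lemma~\ref{prop:staircase}, and $c^{gq}_{\lambda\mu}(b\,|\,c)=c^{GP}_{\mu\lambda}(c\,|\,b)$ from Theorem~\ref{thm:gq(xb)}.

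One correction to your last paragraph: the $gp$/$gq$ transition sums are \emph{not} finite. For example, the coefficient of $gq_{(m)}(y\,|\,c)$ in $gq_{(1)}(y\,|\,b)$ is $\prod_{j=1}^{m-1}(b_1\ominus c_j)$, which is nonzero for every $m\ge 1$. So term-by-term pairing has to be justified differently. What works is the finiteness of the $GQ$/$GP$ expansions together with triangularity: $\langle GQ_\kappa(x),gp_\nu(y\,|\,c)\rangle=0$ unless $\nu\subseteq\kappa$, and similarly for $GP$/$gq$. This guarantees that only finitely many terms of the infinite $y$-side expansion contribute to any pairing with a finite $x$-side combination.
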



\begin{remark}
Let $\lambda, \mu\in \SP(r)$ and $\delta_r=(r,r-1,\dots,1)$.
For $\mu\subset \lambda$, using Proposition \ref{Prop_inv_gro}, we have $$C_{\lambda\mu}^{GP}(b,c)=C_{\lambda\mu}^{GQ}(b,c)=\sum_{I\subset\{j_1,\dots,j_k\}}\beta^{|I|}\G_{w_Iw_\mu w_\lambda^{-1}}(\ominus b;\ominus c),$$
where $\mu\setminus \delta_r$  has exactly $k$ many removable box with contents ${j_1},\dots, {j_k}$.
\end{remark}

\begin{cor}
If $\eta$ is a partition of length less than or equal to $r$ then
    $$G_{\eta}(x_1,\dots,x_r|b)=\sum_{\nu:~\nu\subseteq \eta}C_{\eta\nu}(b,c)G_{\nu}(x_1,\dots,x_r|c), \text{ and }$$
    $$C_{\eta\nu}(b,c)=\sum_{I\subset\{j_1,\dots,j_k\}}\beta^{|I|}\G_{w_Iw_\nu^{(r)}(w_{\eta}^{(r)})^{-1}}(\ominus b;\ominus c),$$
    where $\nu\setminus \delta_r$  has exactly $k$ many removable box with contents ${j_1},{j_2},\dots, {j_k}$.
\end{cor}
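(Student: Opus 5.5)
The plan is to deduce this from the expansion of factorial $GP$-functions, via the factorization property in Lemma~\ref{lem_fac_GP}. Fix $r$ and a partition $\eta$ of length at most $r$, and set $\lambda:=\eta+\delta_r$. This is a strict partition of length exactly $r$, and $\lambda\setminus\delta_r$ corresponds to the (unshifted) diagram of $\eta$. Start from \eqref{exp_eq_GP} with the roles of the parameters arranged as $GP_\lambda(x\,|\,b)=\sum_{\mu}C^{GP}_{\lambda\mu}(b,c)\,GP_\mu(x\,|\,c)$, where $\mu\subset\lambda$ and $\ell(\mu)=r$. Specialize this to the finitely many variables $x_1,\dots,x_r$, which is legitimate by the stability property. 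Every strict $\mu$ of length $r$ contains $\delta_r$, so each such $\mu$ is uniquely of the form $\nu+\delta_r$ with $\nu\subseteq\eta$ a partition of length at most $r$. No other strict partitions occur in the expansion, so nothing needs to be discarded.

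Next apply \eqref{eq: GP rho+la 1} to both sides: $GP_{\delta_r+\eta}(x_1,\dots,x_r|b)=GP_{\delta_r}(x_1,\dots,x_r)\,G_\eta(x_1,\dots,x_r|b)$, and likewise for each $\mu=\delta_r+\nu$ with parameter $c$. By \eqref{eq:GP rho 0} the common factor $GP_{\delta_r}(x_1,\dots,x_r)=x_1\cdots x_r\prod_{i<j}(x_i\oplus x_j)$ is independent of the parameters and is a nonzero divisor. Cancelling it gives
\[
G_\eta(x_1,\dots,x_r|b)=\sum_{\nu\subseteq\eta}C^{GP}_{\eta+\delta_r,\nu+\delta_r}(b,c)\,G_\nu(x_1,\dots,x_r|c),
\]
so $C_{\eta\nu}(b,c)=C^{GP}_{\eta+\delta_r,\nu+\delta_r}(b,c)$, in agreement with \eqref{exp_fac_gro}. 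These coefficients are uniquely determined, because the $G_\nu(x|c)$ are linearly independent; this follows, for instance, from unitriangularity in lowest degree against the Schur polynomials.

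It remains to substitute the closed formula for $C^{GP}$. By Corollary~\ref{cor:coins} and Theorem~\ref{thm:gp expansion}, with $b$ and $c$ interchanged, $C^{GP}_{\lambda\mu}(b,c)=C^{gq}_{\mu\lambda}(c,b)=\sum_{I}\beta^{|I|}\G_{w_\lambda w_\mu^{-1}w_I}(c;b)$. Here $I$ ranges over subsets of the contents of the removable boxes of $\mu\setminus\delta_r$. Applying Proposition~\ref{Prop_inv_gro} gives $\G_{w_\lambda w_\mu^{-1}w_I}(c;b)=\G_{w_I w_\mu w_\lambda^{-1}}(\ominus b;\ominus c)$, using $w_I^{-1}=w_I$ since the $s_{j}$ commute; this is exactly the Remark following Corollary~\ref{cor:coins}. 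Finally, by Remark~\ref{prop_w_lam}, $w_\mu w_\lambda^{-1}=w^{(r)}_{\nu}(w^{(r)}_{\eta})^{-1}$, and the removable boxes of $\mu\setminus\delta_r$ are those of $\nu$ placed on top of $\delta_r$. Substituting yields the stated formula.

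The main obstacle is the bookkeeping of conventions rather than any new idea. Three points need care: the direction of the parameter swap ($C^{GP}_{\mu\lambda}(c,b)$ versus $C^{GP}_{\lambda\mu}(b,c)$ in Corollary~\ref{cor:coins}); the inversion $w\mapsto w^{-1}$ together with $b\mapsto\ominus c$ and $c\mapsto\ominus b$ in Proposition~\ref{Prop_inv_gro}; and the fact that the contents $j_1,\dots,j_k$ must be read in the shifted diagram $\nu+\delta_r$, matching the indexing of the $s_j$ in $w^{(r)}$. I would verify these by a check on the smallest case $\eta=(1)$, $\nu=\varnothing$ with $r=1$, where both sides can be computed directly.
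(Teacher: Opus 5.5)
Your proposal is correct and follows the paper's own (implicit) derivation: \eqref{exp_fac_gro} from \eqref{exp_eq_GP} and the factorization \eqref{eq: GP rho+la 1}, then the Remark after Corollary~\ref{cor:coins} (which is exactly your combination of Theorem~\ref{thm:gp expansion}, Corollary~\ref{cor:coins} and Proposition~\ref{Prop_inv_gro}), and finally Remark~\ref{prop_w_lam} to rewrite the permutation as $w^{(r)}_\nu(w^{(r)}_\eta)^{-1}$. The only slip is in your optional uniqueness aside: in the $x$-grading, the lowest-degree part of a factorial $G_\nu(x\,|\,c)$ is not $s_\nu(x)$ (already $G_{(1)}(x_1\,|\,c)=(x_1-c_1)/(1+\beta c_1)$ has a constant term), so if you want linear independence, obtain it instead from Proposition~\ref{prop_van_pro} applied to $GP_{\nu+\delta_r}(x\,|\,c)=GP_{\delta_r}(x)\,G_\nu(x\,|\,c)$ --- although the proof of the stated identity does not need it.
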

The expansion of double Grothendieck polynomials in terms of Grothendieck polynomials is know due to \cite[Proposition 5.8]{LLS}.

\begin{cor}
Let $\lambda=(\lambda_1,\dots,\lambda_r)\in \SP(r)$ and $\delta_r=(r,r-1,\dots,1)$.
Then the coefficients in \eqref{eq_exp_eq_gq} and \eqref{eq_exp_eq_GP} are given by the following: $$C_{\lambda\mu}^{gp}(b)=C_{\lambda\mu}^{gq}(b)=\sum_{I\subset\{j_1,\dots,j_k\}}\beta^{|I|}\G_{{w_\mu w_\lambda^{-1}} w_I}(b),$$
 where $\lambda\setminus \delta_r$  has exactly $k$ many removable box with contents ${j_1},{j_2},\dots, {j_k}$. Moreover, $$C_{\lambda\mu}^{GP}(b)=C_{\lambda\mu}^{GQ}(b)=\sum_{I\subset\{j_1,\dots,j_k\}}\beta^{|I|}\G_{w_Iw_\mu w_\lambda^{-1}}(\ominus b),$$
 where $\mu\setminus \delta_r$  has exactly $k$ many removable box with contents ${j_1},{j_2},\dots, {j_k}$.
\end{cor}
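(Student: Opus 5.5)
The plan is to obtain both formulas by specializing the equivariant results to $c=0$. The only inputs needed are Theorem~\ref{thm:gp expansion}, Corollary~\ref{cor:coins} and Proposition~\ref{Prop_inv_gro}. By definition, $gq_\mu(x)=gq_\mu(x\,|\,0)$, $gp_\mu(x)=gp_\mu(x\,|\,0)$, $GP_\mu(x)=GP_\mu(x\,|\,0)$ and $GQ_\mu(x)=GQ_\mu(x\,|\,0)$. Hence the non-equivariant coefficients are $C^{gq}_{\lambda\mu}(b)=C^{gq}_{\lambda\mu}(b,0)$ and $C^{GP}_{\lambda\mu}(b)=C^{GP}_{\lambda\mu}(b,0)$, and similarly for $gp$ and $GQ$.

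For the first formula, I set $c=0$ in \eqref{eq_cof_arb_lam}. Since $\G_w(b;0)=\G_w(b)$ by definition, this gives
\[
C^{gp}_{\lambda\mu}(b)=C^{gq}_{\lambda\mu}(b)=\sum_{I}\beta^{|I|}\G_{w_\mu w_\lambda^{-1}w_I}(b),
\]
where $I$ runs over subsets of the contents of the removable boxes of $\lambda\setminus\delta_r$. The restriction $\ell(\mu)=\ell(\lambda)=r$ comes from Theorem~\ref{thm:gq(xb)} and from \eqref{ex:gp}. Note also that any $\lambda\in\SP(r)$ contains $\delta_r$, so the hypotheses of Theorem~\ref{thm:gp expansion} are satisfied.

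For the second formula, I start from Corollary~\ref{cor:coins} with the roles of $(\lambda,b)$ and $(\mu,c)$ interchanged. For $\mu\subset\lambda$ this gives
\[
C^{GP}_{\lambda\mu}(b,c)=C^{GQ}_{\lambda\mu}(b,c)=C^{gq}_{\mu\lambda}(c,b)=\sum_{I}\beta^{|I|}\G_{w_\lambda w_\mu^{-1}w_I}(c;b),
\]
where now $I$ runs over subsets of the contents of the removable boxes of $\mu\setminus\delta_r$. Next I apply Proposition~\ref{Prop_inv_gro}, which says $\G_w(c;b)=\G_{w^{-1}}(\ominus b;\ominus c)$. The distinct contents $j_m$ differ by at least $2$, so the $s_{j_m}$ commute and $w_I^{-1}=w_I$. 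Therefore $(w_\lambda w_\mu^{-1}w_I)^{-1}=w_Iw_\mu w_\lambda^{-1}$, which is exactly the Remark following Corollary~\ref{cor:coins}. Finally I put $c=0$ and use $\ominus 0=0$ to get $\G_{w_Iw_\mu w_\lambda^{-1}}(\ominus b;0)=\G_{w_Iw_\mu w_\lambda^{-1}}(\ominus b)$.

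The only delicate point is bookkeeping: in the $GP/GQ$ formula the set $\{j_1,\dots,j_k\}$ refers to $\mu\setminus\delta_r$, not $\lambda\setminus\delta_r$. This is because the index of the $gq$-coefficient is swapped when passing through Corollary~\ref{cor:coins}, so I will track that swap explicitly. It is also worth checking that specializing $c=0$ is legitimate, i.e. that each coefficient is a polynomial in $c$ with no pole at $c=0$. This holds because double Grothendieck polynomials are polynomials, and for the $\ominus c$ side because $\ominus c_i=-c_i/(1+\beta c_i)$ is regular at $c_i=0$.
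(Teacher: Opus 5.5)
Your proposal is correct and is essentially the paper's own argument. The paper leaves this corollary as the specialization $c=0$ of Theorem~\ref{thm:gp expansion} for the $gp/gq$ coefficients, and of the Remark following Corollary~\ref{cor:coins} (obtained from Corollary~\ref{cor:coins}, Proposition~\ref{Prop_inv_gro} and $w_I^{-1}=w_I$) for the $GP/GQ$ coefficients, which is exactly what you do, including the switch to the removable boxes of $\mu\setminus\delta_r$.
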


By comparing the coefficients in the expansion formulae in Lemma \ref{prop:staircase} and Theorem \ref{thm:gq(xb)}, we obtain a Jacobi-Trudi type formulae of the factorial Grothendieck polynomial $ G_{\eta}(b_1,\dots,b_r|c)$ as defined in \eqref{eq: def fac Groth}, for any partition $\eta$ of length less than or equal to $r$.
\begin{cor}
Let $\eta$ be a partition of length less than or equal to $r$. Then
$ G_{\eta}(b_1,\dots,b_r|c)$ is equal to
\begin{equation*}
\frac{d_{\delta_r}(b)}{d_{\eta\sqcup \delta_r}(c)}\det\Big(\sum_{m\geq0}{i-j\choose m}\beta^mh_{\eta_i-i+j+m}(b_1,\dots,b_{r+1-j};-c_1,\dots,-c_{\eta_i+r-i})\Big)_{1\leq i,j\leq r}.
\end{equation*}
 where ${d_{\delta_r}(b)}$ and ${d_{\eta\sqcup \delta_r}(c)}$ are defined in \eqref{eq_d_lam}.
\end{cor}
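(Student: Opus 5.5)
The plan is to compute one coefficient in two different ways and compare. The coefficient is the one of $gq_\mu(y\,|\,c)$ in the expansion of $gq_{\delta_r}(y\,|\,b)$, with $\mu:=\eta\sqcup\delta_r=\eta+\delta_r$. This $\mu$ is a strict partition of length exactly $r$, and conversely every $\mu\in\SP(r)$ contains $\delta_r$ and arises this way from $\eta=\mu-\delta_r$. The family $\{gq_\mu(y\,|\,c)\}_{\mu\in\SP}$ is linearly independent, being dual to the basis $\{GP_\mu(x\,|\,c)\}$ (Theorem \ref{thm: gq VEV} together with the pairing of Lemma \ref{lem:x1x2}). Hence the coefficients in any expansion of $gq_{\delta_r}(y\,|\,b)$ in this family are uniquely determined, and it suffices to compute them twice.

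\emph{First computation.} By Lemma \ref{prop:staircase_1}, the coefficient of $gq_\mu(y\,|\,c)$ in $gq_{\delta_r}(y\,|\,b)$ is $\G_{w_\mu w_{\delta_r}^{-1}}(b;c)$. Remark \ref{prop_w_lam} gives $w_\mu w_{\delta_r}^{-1}=w^{(r)}_{\eta}$. Proposition \ref{prop_fac_gro_vs_dou_gro} then identifies this coefficient with $G_\eta(b_1,\dots,b_r|c)$. It is worth recalling why Lemma \ref{prop:staircase_1} holds: it comes from $C^{GP}_{\mu\delta_r}(c,b)=GP_\mu(b_{\delta_r}|c)/GP_{\delta_r}(b_{\delta_r}|b)$, which follows from the vanishing property (Proposition \ref{prop_van_pro}), combined with the factorization \eqref{eq: GP rho+la 1} and \eqref{eq:GP rho 0}. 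The common factor $GP_{\delta_r}(b_{\delta_r})$ cancels, and it is nonzero by Proposition \ref{prop_van_pro}.

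\emph{Second computation.} Theorem \ref{thm:gq(xb)} with $\lambda=\delta_r$ states that this same coefficient equals
\[
\frac{d_{\delta_r}(b)}{d_{\mu}(c)}\det\Big(\sum_{k\ge0}\beta^k\binom{i-j}{k}h_{\mu_i-\lambda_j+k}(b_1,\dots,b_{\lambda_j};-c_1,\dots,-c_{\mu_i-1})\Big)_{1\le i,j\le r}.
\]
I then substitute $\lambda_j=r+1-j$ and $\mu_i=\eta_i+r+1-i$. This gives $\mu_i-\lambda_j=\eta_i-i+j$, the $b$-variables become $b_1,\dots,b_{r+1-j}$, and the $c$-variables become $-c_1,\dots,-c_{\eta_i+r-i}$. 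That is precisely the matrix in the statement, with prefactor $d_{\delta_r}(b)/d_{\eta\sqcup\delta_r}(c)$. Equating the two computations gives the claimed formula.

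The main obstacle is bookkeeping of conventions rather than any new argument. One must check that the coefficient $c^{gq}_{\lambda\mu}(b\,|\,c)$ of Theorem \ref{thm:gq(xb)} and the coefficient produced in Lemma \ref{prop:staircase_1} are the same quantity. Both are the pairing ${}_{(GP,c)}\langle\mu|\delta_r\rangle_{(gq,b)}=\langle GP_\mu(x\,|\,c),gq_{\delta_r}(y\,|\,b)\rangle$, so the roles of $b$ and $c$ agree. I would also confirm that the coefficient extractions in Proposition \ref{thm:b-c} and in Lemma \ref{lem:integral2} give the indexing $h_{\mu_i-\lambda_j+k}$ with $\mu_i-1$ variables $c$, which is consistent with the substitution above. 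The equality $w_\mu w_{\delta_r}^{-1}=w^{(r)}_{\mu-\delta_r}$ should also be checked against the row ordering in \eqref{eq_w_lam_A_k}.
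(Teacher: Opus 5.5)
Your proposal is correct and is essentially the paper's argument: you compare the coefficient of $gq_{\eta+\delta_r}(y\,|\,c)$ in $gq_{\delta_r}(y\,|\,b)$ computed two ways. One is the staircase expansion combined with Remark \ref{prop_w_lam} and Proposition \ref{prop_fac_gro_vs_dou_gro}. The other is the determinant of Theorem \ref{thm:gq(xb)} at $\lambda=\delta_r$, and your index substitution is right.

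The paper's one-line justification cites the $gp$ staircase Lemma \ref{prop:staircase}. Matching that with the $gq$ determinant implicitly needs the coincidence of Corollary \ref{cor:coins}. Your use of Lemma \ref{prop:staircase_1} matches the two $gq$ expansions directly and avoids that step.
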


\appendix


\end{document}